\documentclass[reqno]{amsart}

\usepackage{amsmath,amssymb,cite}
\usepackage{graphics,epsfig,color}
\usepackage[mathscr]{euscript}
\usepackage{mathtools}
\usepackage{comment}
\mathtoolsset{showonlyrefs}
\usepackage{xcolor}
\usepackage{hyperref}
\usepackage[c]{esvect}
\usepackage{cancel}

\usepackage[normalem]{ulem}

\definecolor{bblue}{rgb}{.2,0.2,.8}
\usepackage{graphicx} 
\usepackage{tikz}
\usepackage{subfigure}
\usetikzlibrary{graphs, shapes.geometric, arrows.meta}
\usetikzlibrary{decorations.pathmorphing}
\usepackage{circuitikz}
\usetikzlibrary{trees, positioning}
\usetikzlibrary{fit, shapes.geometric}
\usepackage[all]{xy}

\theoremstyle{plain}
\newtheorem{theorem}{Theorem}[section]
\newtheorem{proposition}[theorem]{Proposition}
\newtheorem{lemma}[theorem]{Lemma}
\newtheorem{corollary}[theorem]{Corollary}

\theoremstyle{definition}
\newtheorem{definition}[theorem]{Definition}
\newtheorem{assumption}[theorem]{Assumption}

\theoremstyle{remark}
\newtheorem{remark}[theorem]{Remark}

\newcommand{\R}{\mathbb R}
\newcommand{\T}{\mathbb T}
\newcommand{\N}{\mathbb N}

\renewcommand{\d}{\mathrm d}

\numberwithin{equation}{section}
\numberwithin{theorem}{section}

\def\\{\par\medskip}

\let\0=\noindent

\newcommand{\id}{{1 \mskip -5mu {\rm I}}}
\renewcommand{\epsilon}{\varepsilon}
\renewcommand{\hat}{\widehat}

\begin{document}

\title[Invariant measure of multiscale chains]{The Invariant Measure of Multiscale Markov Chains via Fast Arborescence Factorization}

\author{Diego Alberici}
\address{\noindent Diego Alberici \hfill\break\indent 
	DISIM, Universit\`a dell'Aquila
	\hfill\break\indent 
	67100 Coppito, L'Aquila, Italy
}
\email{diego.alberici@univaq.it}

\author{Davide Gabrielli}
\address{\noindent Davide Gabrielli \hfill\break\indent 
 DISIM, Universit\`a dell'Aquila
\hfill\break\indent 
67100 Coppito, L'Aquila, Italy
}
\email{davide.gabrielli@univaq.it}

\author{Giulia Pallotta}
\address{\noindent Giulia Pallotta \hfill\break\indent 
	DISIM, Universit\`a dell'Aquila
	\hfill\break\indent 
	67100 Coppito, L'Aquila, Italy
}
\email{giulia.pallotta@graduate.univaq.it}

\begin{abstract}
	We consider a family of continuous-time Markov chains with finite strongly connected transition graph and rates $\left(r_N\right)_{N>0}$ depending on a parameter $N$, so that, when $N$ is large, transitions may happen on different time scales. Under suitable general assumptions on the asymptotic behavior of the rates, we give a recursive characterization of the limiting invariant measure. The recursion is encoded in a forest structure equivalent to the one recently developed in the analysis of dynamical aspects of metastability \cite{BL,LX}.

Our proof is based on a combinatorial representation of the invariant measure, given by the Markov chain tree theorem. Basic steps are the reduction of the chain by a trace process, the introduction of an effective dynamics, and a careful analysis of the set of relevant arborescences in the expansion. In particular we use a factorization of fast arborescences. As a byproduct we obtain properties of the arborescences of generalized star-delta reductions of weighted digraphs.
\end{abstract}

\maketitle
\thispagestyle{empty}

\section*{Introduction}

An interesting problem in Probability Theory is to understand the behavior of a Markovian dynamics whose components act on different time scales.
This framework is commonly encountered in applications and is also highly compelling from a theoretical point of view.
This serves as the core motivation for the theory of metastability, which - originating from the seminal papers \cite{Eyr35, Kra40} and subsequently \cite{CGOV, FW70, OS95, BEGK} - has developed into a broad and prominent general theory utilizing various approaches and techniques; see for example \cite{BH, OV05, L1} for general references.

In particular, in the context of spin glasses degrees of freedom evolving on different time scales were introduced in \cite{H,PCS,DFM}. 
In fact, many time scales are expected to naturally emerge in the mean-field spin glass dynamics, due to a free-energy landscape characterized by nested traps (see for example \cite{BCKM,Pa,bAC} for introductions to the topic, and \cite{bAJ} 
and references therein).

\smallskip

We consider here a simple, specific model corresponding to a continuous-time Markov chain with a fixed, finite state space and general transition rates depending on a parameter $N$ and operating on different time scales when $N$ becomes large.
Moreover, we focus specifically on the asymptotic behavior of the invariant measure.
This is a classic problem to which several papers have been dedicated, see for example \cite{BlR,SA, Mey,Y} and references therein. The fundamental mechanism for understanding the limiting behavior of the invariant measure relies on the classic averaging principle, according to which the effective slow dynamics is averaged over the invariant measure of the fast component. While a direct application of this principle provides the answer in some simple cases, it is generally not sufficient in more complex settings, as identifying fast and slow components is not always straightforward and multiple time scales may coexist. In particular, the principle must be applied recursively a finite number of times, suitably identifying the fast components at each iteration step. 
We show that these iterative steps can be codified in a directed forest graph, revealing a hierarchical structure of the invariant measure.
Starting from the deepest root and proceeding via products along paths, we obtain the value of the limiting measure at the corresponding leaves.
This forest is identical to the one obtained by analyzing the dynamic metastable behavior through the trace process in \cite{BL,LX}.
\smallskip

Our approach is combinatorial and is based on the representation of the invariant measure via the Markov chain tree theorem.
The limiting invariant measure is related to the weights of the dominating arborescences.
We show that these dominating arborescences can be obtained by a factorization in terms of fast forests of the \textit{reduced dynamics} and arborescences of an \textit{effective dynamics}.
This decomposition is repeated at every step of iteration, by taking the effective dynamics as the new chain.

The reduced dynamics is the trace process on the rapidly recurrent states. We interpret it in terms of a network reduction, that we analyze in detail. Since we do not assume reversibility of the dynamics, this corresponds to a generalized star-delta reduction (see \cite{BF,GH}).
After that the effective dynamics is an application of the averaging principle.

\smallskip

Our main result is Theorem \ref{ilteo}, which expresses the limiting invariant measure as a convex combination of the invariant measures of the fast dynamics within the closed irreducible classes. The coefficients of this combination are given by the invariant measure of the effective dynamics.
By recursively applying the procedure, our second main result, Theorem \ref{ilteo2}, expresses the limiting invariant measure as a finite product of weights along the paths from the root to the leaves of a suitable forest. This forest encodes the structure of the metastable behaviors \cite{BL,LX}.

\smallskip

A future problem of interest is the study of the full expansion of the invariant measure in terms of arborescences at all orders and not only the leading one, like in \cite{conL,L, dGM}. A further problem is to investigate state spaces that are infinite or whose cardinality increases together with the parameter $N$.

\smallskip
The paper is organized as follows.

In Section \ref{sec: notation}, we introduce the model, discuss the assumptions, and illustrate the basic combinatorial constructions.

In Section \ref{sec: main}, we introduce the reduced and effective dynamics, state our main results, and prove some basic propositions.

In Section \ref{sec: iteration}, we outline the recursive structure derived from the main result.

In Section \ref{sec:stardelta}, we illustrate a generalized star-delta reduction for directed weighted graphs.

In Section \ref{sec:proof}, we present the core of the proof of the main theorem.

In Section \ref{sec:examples}, we discuss several examples, including a multi-scale product space and a boundary driven exclusion process.
\smallskip

\section{Multiscale Markov chain and combinatorial constructions} \label{sec: notation}

\subsection{Multiscale Markov chain}
Let $G=(V,E)$ be a strongly connected directed graph with finite vertex set $V$ and edge set $E$. To every edge $(x,y) \in E$ we associate a positive weight $r_N(x,y)>0$ depending on a parameter $N >0$.
It is sometimes convenient to set $r_N(x,y)=0$ for every pair of vertices $(x,y)\notin E$.
We call $X^N_t$ a continuous time Markov chain on $(V,E)$ with transition rates $r_N$. Since for finite $N$ the chain is irreducible, it admits a unique (strictly positive) invariant measure $\pi_N$ that is characterized by the stationarity condition
\begin{equation}\label{stazcond}
	\pi_N(x)\sum_{y:(x,y)\in E}r_N(x,y)\,=\sum_{y: (y,x)\in E}\pi_N(y)\;r_N(y,x)\,,\quad x\in V\,.
\end{equation}
Our aim is to determine the limiting invariant measure 
\begin{equation}
\pi:=\lim_{N \rightarrow{\infty}}\pi_N
\end{equation}
under suitable assumptions on the rates and our approach is based on the combinatorial representation of $\pi_N$ given by the Markov chain tree theorem.
The fact that we are interested in Markov chains with \emph{multiscale behavior} is formalized in the following basic property, to which additional hypotheses will be added when necessary.
\begin{assumption}\label{assumption1}
	We assume that for each pair of edges $(x,y),(w,z)\in E$ 
	the limit
	\begin{equation}\label{limc}
		\lim_{N\to\infty}\frac{r_N(x,y)}{r_N(w,z)}\,
	\end{equation}
	exists and is either finite and positive, zero or $\infty$.
\end{assumption}
Assumption \ref{assumption1} defines a partition of $E$ into equivalence classes $E=\cup_{i=1}^F E_i$ with respect to the \textit{scale} or \textit{velocity} of the rates: the edges $(x,y)$ and $(w,z)$ belong to the same class if the limit \eqref{limc} is finite and strictly positive.
There is a natural total order relation on these velocity classes: we may assume that for $j>i$ the edges in $E_j$ are faster than those in $E_i$, 
that is the limit \eqref{limc} is $\infty$ for $(x,y)\in E_j$ and $(w,z)\in E_i$.
$E_F$ denotes the maximum equivalence class in this total order: it contains the edges that are faster than any other and we call them the \emph{fast edges}.

\subsection{Markov chain tree theorem}

Let us start with the definition of \emph{arborescence}, a generalization of spanning tree in the directed context.

\begin{definition}
Let $\tau$ be a directed subgraph of $(V,E)$ and let $x\in V$. $\tau$ is an \emph{arborescence rooted at $x$} (or \emph{directed towards $x$}) if for every $y \in V$, $\tau$ contains a unique directed path going from $y$ to $x$.	

We denote by $\mathcal{T}_x$ the set of arborescences of $(V,E)$ rooted at $x$ and we set $\mathcal{T}=\bigcup_{x \in V}\mathcal{T}_x$.
\end{definition}
In the previous definition the singleton $\{x\}$ has to be considered as a path from $x$ to itself.
An arborescence $\tau\in\mathcal T_x$ has the following properties:
\begin{itemize}
        \item $\tau$ contains no edges going out from the root $x$;
		\item for every $y \in V \setminus \{x\}$, $\tau$ contains a unique edge going out from $y$;
		\item $\tau$ contains no cycles;
		\item $\tau$ is spanning on $V$.
\end{itemize}
\smallskip

Every arborescence $\tau\in\mathcal T$ is given a weight defined as the product of the weights of its constituent edges:
\begin{equation}
	r_N(\tau) := \prod_{(x,y) \in \tau} r_N(x,y) \;.
\end{equation}
For a generic subgraph $\gamma$ of $(V,E)$ we set
$r_N(\gamma):=\prod_{(x,y)\in \gamma}r_N(x,y)$. Moreover, for a collection $\mathcal G$ of subgraphs of $(V,E)$, we set
$r_N(\mathcal G):=\sum_{\gamma\in \mathcal G}r_N(\gamma)$.
In particular
\begin{equation}
	r_N(\mathcal{T}_x) = \sum_{\tau \in \mathcal{T}_x} \prod_{(x,y) \in \tau} r_N(x,y)
\end{equation}
and similarly $r_N(\mathcal{T})=\sum_{x \in V}r_N(\mathcal{T}_x)$.
\begin{figure}
	\centering
	\mbox{ \xygraph{
			!{<0cm,0cm>;<1cm,0cm>:<0cm,1cm>::}
			!{(0,0) }*+{\bullet}="a"
			!{(2,1) }*+{\bullet}="b"
			!{(4,0) }*+{\bullet}="c"
			!{(2,-1.3) }*+{\bullet_{x}}="d"
			!{(1,2.4) }*+{\bullet}="e"
			!{(3,2) }*+{\bullet}="f"
			!{(4.2,1.6) }*+{\bullet}="g"
			"e":@/^0.4cm/"b"
			"b":@/^0.4cm/"d"
			"a":@/^0.4cm/"d"
			"f":@/^0.4cm/"b"
			"g":@/_0.3cm/"c"
			"c":@/^0.4cm/"d"
		}
	}
	\caption{An arborescence rooted at $x$}
\end{figure}
These combinatorial objects allow us to provide an expression for the invariant measure of Markov chains, under uniqueness condition. The following is a classical result
\begin{theorem}[Markov chain tree theorem]
	\label{matrixtreeth}
	Let $X^N_t$ be a continuous-time Markov chain with transition graph $(V,E)$ and positive transition rates $r_N$.
	If $(V,E)$ is strongly connected, then $X^N_t$ admits a unique invariant measure $\pi_N$, which is strictly positive and given by
	\begin{equation}
		\label{markovchaingeneral}
		\pi_N(x)=\frac{r_N(\mathcal{T}_x)} {r_N(\mathcal{T})}\,,\quad x\in V\,.
	\end{equation}
\end{theorem}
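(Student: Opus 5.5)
We prove the Markov chain tree theorem by checking the stationarity condition \eqref{stazcond} directly for the unnormalized measure $\mu(x):=r_N(\mathcal T_x)$, and then treating uniqueness and positivity separately.

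\emph{Positivity.} Since $(V,E)$ is strongly connected, every vertex $x$ has at least one arborescence rooted at it. One example is a breadth-first tree of shortest paths to $x$ in the reversed graph. Because all rates are positive, $\mu(x)>0$.

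\emph{Uniqueness.} The generator $L$ of the irreducible chain has a one-dimensional kernel for $L^{T}$. This is the standard Perron--Frobenius argument. Alternatively, if $\nu$ is a signed invariant vector, take the set where $\nu/\mu$ is maximal; this set is closed under incoming probability flow, and irreducibility forces it to be all of $V$. Hence uniqueness reduces to showing that $\mu$ satisfies \eqref{stazcond}. Normalization then gives \eqref{markovchaingeneral}.

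\emph{Stationarity via a bijection.} Fix $x$. The left-hand side of \eqref{stazcond} for $\mu$ is
\[
\sum_{\tau\in\mathcal T_x}\ \sum_{y:(x,y)\in E} r_N(\tau)\,r_N(x,y).
\]
This is the total weight of pairs $(\tau,(x,y))$, i.e.\ of graphs $\tau\cup\{(x,y)\}$. Every vertex of such a graph has exactly one outgoing edge, and the graph contains exactly one cycle, which passes through $x$. The right-hand side is
\[
\sum_{y:(y,x)\in E}\ \sum_{\sigma\in\mathcal T_y} r_N(\sigma)\,r_N(y,x).
\]
This is the total weight of graphs $\sigma\cup\{(y,x)\}$. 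These are again unicyclic graphs with one out-edge per vertex, and their unique cycle passes through $x$ via the edge $(y,x)$.

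Both sides therefore sum over the same set $\mathcal U_x$: spanning subgraphs in which each vertex has out-degree one and whose unique cycle contains $x$. The weight is the product of the edge rates in both cases. Given $g\in\mathcal U_x$, removing the cycle edge leaving $x$ yields an arborescence rooted at $x$, giving the left-hand representation. Removing instead the cycle edge entering $x$, say $(y,x)$, yields an arborescence rooted at $y$. Indeed, the path from any vertex first reaches the cycle and then follows it to $y$ without loops. This gives the right-hand representation.

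Both maps are bijections onto their respective index sets, with inverse "add the edge back". It follows that
\[
\mu(x)\sum_y r_N(x,y)=r_N(\mathcal U_x)=\sum_y\mu(y)r_N(y,x).
\]

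\emph{Main obstacle.} The main care lies in verifying that these decompositions really are bijections. One must check that adding an edge to an arborescence produces exactly one cycle, and that it passes through $x$. Conversely, one must check that deleting the edge entering $x$ on the cycle leaves a valid arborescence with root $y$. The functional graph structure, with out-degree one everywhere, makes both facts straightforward.
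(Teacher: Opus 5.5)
Your proof is correct and follows essentially the same route as the paper: the paper cites the literature for this theorem, but in the proof of Proposition~\ref{Unicomassimo} it verifies stationarity exactly as you do, by showing that both sides of \eqref{stazcond} equal the weight $r_N(\mathcal U_x)$ of the spanning subgraphs with out-degree one at every vertex whose unique cycle passes through $x$. The only difference is uniqueness: the paper takes it from the classical class decomposition \eqref{invclass}, while your maximum-principle argument on $\nu/\mu$ proves it directly.
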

For a proof we refer for example to \cite{AT}. We point out also a continuous version in \cite{ACG} and an application to discrete Hamilton-Jacobi equations and large deviations in \cite{AGP}. The results of this paper could possibly have applications in such frameworks.

\subsection{Equivalence classes, DAG and invariant measures}
We briefly recall the theory of Markov chain \textit{equivalence classes} from a graph-theoretic perspective.
In this subsection $(\mathfrak V,\mathfrak E)$ is a generic digraph, not necessarily strongly connected, and $R_N$ is a positive weight associated to each edge in $\mathfrak E$.
The construction introduced here will be applied in the following to several digraphs. First of all the reader may think  to the fast transition graph $(V,E_F)$.
\smallskip

Two nodes $x,y\in\mathfrak V$ belong to the same equivalence class $Q\subseteq \mathfrak V$ if there exist directed paths in $(\mathfrak V,\mathfrak E)$ both from $x$ to $y$ and from $y$ to $x$. In the theory of Markov chains such equivalence classes are called \emph{communicating classes} or \emph{irreducible classes}.
We denote $\mathfrak E[Q]\subseteq \mathfrak E$ the set of edges with both extrema in $Q$ and observe that the subgraph $(Q,\mathfrak E[Q])$ is strongly connected.
\smallskip

Now, let $\mathcal Q$ be the collection of all equivalence classes and let $(\mathcal Q, \mathcal E)$ be the digraph defined by: $(Q,Q')\in \mathcal E$ iff there exist $x\in Q$, $y\in Q'$ such that $(x,y)\in \mathfrak E$.
Observe that by definition  $(\mathcal Q, \mathcal E)$ is a \emph{directed acyclic graph} (DAG), i.e. it contains no directed cycles.
\begin{figure}[h]
	\centering
	\mbox{ \xygraph{
			!{<0cm,0cm>;<1cm,0cm>:<0cm,1cm>::}
			!{(0,0) }*+{\bullet}="a"
			!{(2,1) }*+{\bullet}="b"
			!{(4,0) }*+{\bullet}="c"
			!{(1,-1.3) }*+{\bullet_{M_1}}="d" 
			!{(3,-1.3) }*+{\bullet_{M_2}}="k"
			!{(5,-1.3) }*+{\bullet_{M_3}}="z"
			!{(1,2.4) }*+{\bullet}="e"
			!{(3,2) }*+{\bullet}="f"
			!{(4.2,1.6) }*+{\bullet}="g"
			"e":@/^0.4cm/"b"
			"b":@/^0.4cm/"d"
			"a":@/^0.4cm/"d"
			"b":@/^0.2cm/"a"
			"f":@/^0.4cm/"b"
			"b":@/^0.4cm/"k"
			"g":@/_0.25cm/"c"
			"c":@/^0.4cm/"k"
		}
	}
	\caption{A directed acyclic graph (DAG) with local minima $M_1,M_2,M_3$.}
\end{figure}
\smallskip

A DAG naturally induces a partial order on $\mathcal Q$: we may say that $Q\geq Q'$ if there exists a directed path from $Q$ to $Q'$ in $(\mathcal Q,\mathcal E)$.
Since the DAG is finite, it has at least one \textit{local minimum}, i.e. there exists $M\in \mathcal Q$ such that the condition $Q'\leq M$ implies $Q'=M$.
Let $\mathcal M\subseteq \mathcal Q$ denote the collection of classes that are local minima. In the theory of Markov chains they are called \emph{closed classes}, since no edge $e\in\mathfrak E$ can exit from them.
\smallskip

Recall that the equivalence classes $Q ,M$ are subsets of $\mathfrak V$ while $\mathcal Q, \mathcal M$ are collections of disjoint subsets of $\mathfrak V$. 
By the classical theory of Markov chains, the set of invariant measures of a continuous-time Markov chain with weighted transition graph $(\mathfrak V,\mathfrak E, R_N)$ is the $(|\mathcal M|-1)$-dimensional simplex
\begin{equation}\label{invclass}
	\left\{\sum_{M\in \mathcal M}c_M\;\mu_M^N\;\;\bigg|\;\; 0\leq c_M\leq1\;\forall\,M\in\mathcal M\;, \sum_{M\in\mathcal M} c_M=1\right\} \;,
\end{equation}
where each $\mu_M^N$ denotes the unique invariant measure of the irreducible Markov chain with strongly connected graph $(M, \mathfrak E[M], R_N)$.
Note that $\mu_M^N$ can be naturally interpreted as a positive probability measure on $M$ or as a (non-negative) probability measure on $\mathfrak V$.
In particular, every invariant measure in the set \eqref{invclass} is supported on $\textrm{Su}\left(\mathcal M\right):=\bigcup_{M\in \mathcal M}M\subseteq \mathfrak V$. 
\smallskip

Let us now prove a simple proposition for the case $|\mathcal M|=1$ generalizing Theorem \ref{matrixtreeth}. We will need it in the following and it does not seem to appear in the literature.

\begin{proposition} Let $(\mathfrak V,\mathfrak E)$ be a digraph with positive weights $R_N$ and let $(\mathcal Q, \mathcal E)$ be the corresponding DAG of irreducible classes defined above. Then:

$(\mathcal Q, \mathcal E)$ has a unique minimal class (i.e. $|\mathcal M|=1$) if and only if $(\mathfrak V,\mathfrak E)$ admits a rooted arborescence.

In this case, the Markov chain with transition graph $(\mathfrak V,\mathfrak E, R_N)$ has a unique invariant measure, given by \eqref{markovchaingeneral}. 
	\label{Unicomassimo}
\end{proposition}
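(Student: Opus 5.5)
The plan has two parts: first the graph-theoretic equivalence, then the formula for the invariant measure.

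\emph{Existence of an arborescence forces a unique minimal class.} Suppose $\tau\in\mathcal T_x$ is an arborescence of $(\mathfrak V,\mathfrak E)$, and let $Q_x$ be the class of $x$. Every vertex $y$ has a directed path to $x$ inside $\tau\subseteq\mathfrak E$. Hence every class $Q$ satisfies $Q\geq Q_x$ in the DAG order. Now let $M\in\mathcal M$ be any minimal class. Since $M\geq Q_x$, minimality gives $M=Q_x$. So $\mathcal M=\{Q_x\}$.

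\emph{A unique minimal class gives an arborescence.} Conversely, assume $\mathcal M=\{M\}$ and fix $x\in M$. I first claim that every $y\in\mathfrak V$ has a directed path to $x$. Start from the class of $y$ and follow DAG edges downward as long as possible. The DAG is finite and acyclic, so this stops at a local minimum, which must be $M$. Lifting each DAG step to an edge of $\mathfrak E$, and using strong connectivity within each class to connect the endpoints, gives a directed path from $y$ into $M$. Strong connectivity of $M$ then extends it to $x$.

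With this claim, I build $\tau$ by a breadth-first construction from $x$ along reversed edges. Let $d(y)$ be the length of a shortest directed path from $y$ to $x$. For each $y\neq x$, choose one edge $(y,z)\in\mathfrak E$ with $d(z)=d(y)-1$. Then $x$ has no outgoing edge and every other vertex has exactly one. Following chosen edges strictly decreases $d$, so there are no cycles and every path ends at $x$. This is an arborescence rooted at $x$. This step is routine; the only care needed is the lifting of DAG paths, which is where the class structure is used.

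\emph{The invariant measure.} In this case \eqref{invclass} is a single point, so the invariant measure is unique and equals $\mu^N_M$, supported on $M$. I would verify that the measure $\pi_N(x)=R_N(\mathcal T_x)/R_N(\mathcal T)$ from \eqref{markovchaingeneral} is well defined and equals it.

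First, $R_N(\mathcal T)>0$ because an arborescence exists.

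Next, $\mathcal T_x=\emptyset$ for $x\notin M$. Indeed, by the first part the root of any arborescence lies in the unique minimal class. Directly: if $x\notin M$, then no vertex of $M$ has a path to $x$, since $M$ is closed.

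Finally, I check that \eqref{markovchaingeneral} satisfies the stationarity equation \eqref{stazcond}. The standard combinatorial proof of Theorem \ref{matrixtreeth} applies verbatim and does not use strong connectivity. Both sides of \eqref{stazcond}, multiplied by $R_N(\mathcal T)$, enumerate the same set: the pairs consisting of an arborescence rooted at some vertex $y$ together with the edge $(x,y)$ added. On the left, the pair is obtained from an arborescence rooted at $x$ plus an edge out of $x$. On the right, it is obtained from an arborescence rooted at $y$ plus the edge $(y,x)$. In both cases the resulting graph is a spanning unicyclic functional graph whose cycle passes through $x$, and removing the appropriate cycle edge recovers the arborescence bijectively.

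Alternatively, one can reduce to the strongly connected case. Arborescences rooted in $M$ restrict to $M$ as an arborescence of $(M,\mathfrak E[M])$, times a forest of the transient vertices directed into $M$. The forest factor is independent of the root, so it cancels in the ratio. The result is exactly the Theorem \ref{matrixtreeth} formula for $\mu^N_M$.

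I expect this last factorization to be the main point needing care: namely, showing that every arborescence rooted in $M$ decomposes uniquely into such a pair. This holds because no edge leaves $M$, so the restriction to $M$ is itself an arborescence of $M$.
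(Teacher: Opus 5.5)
Your proof is correct. The easy direction, where an arborescence rooted at $x$ forces $\mathcal M=\{Q_x\}$, is the same as in the paper. So is the verification of \eqref{stazcond}: both sides, multiplied by $R_N(\mathcal T)$, equal $R_N(\mathcal U_x)$, the weight of spanning subgraphs with one outgoing edge per vertex and $x$ on the unique cycle. One slip: you describe the common set as arborescences rooted at $y$ plus the edge $(x,y)$. On the right-hand side the added edge is $(y,x)$, as your following sentences correctly state.

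The genuine difference is how you build an arborescence when $|\mathcal M|=1$.
\begin{itemize}
\item You first show that every vertex reaches a fixed $x\in M$, then take a shortest-path tree.
\item The paper picks an arborescence of the DAG $(\mathcal Q,\mathcal E)$ rooted at $M$, with one exiting DAG edge per $Q\neq M$. It lifts each DAG edge to some $e_Q=(x,x')\in\mathfrak E$. It then glues arborescences $\tau_Q$ of $(Q,\mathfrak E[Q])$ rooted at the tail of $e_Q$, together with an arborescence $\tau_M$ of $(M,\mathfrak E[M])$.
\end{itemize}
Your route is more generic and needs only reachability. The paper's route produces an arborescence with controlled structure: its restriction to each class is an arborescence of that class, and exactly one edge leaves each non-minimal class. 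This is the shape of the fast arborescences later isolated in Lemma \ref{Lemma2}, which is why the authors build it that way.

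Your alternative argument for the formula is also valid. It factorizes $R_N(\mathcal T_x)=R_N(\mathcal T_x[M])\,R_N(\mathcal F_M)$ for $x\in M$, where $\mathcal F_M$ is the set of forests directing the vertices outside $M$ into $M$. This is in the spirit of the paper's later factorizations (compare the one-scale example in Section \ref{sec:examples}). It also identifies \eqref{markovchaingeneral} directly with $\mu^N_M$ extended by zero, which the paper leaves implicit behind the uniqueness statement from \eqref{invclass}.
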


\begin{proof}
Suppose $|\mathcal M|=1$. By \eqref{invclass} the invariant measure is unique. The acyclic graph $(\mathcal Q, \mathcal E)$ has an unique local minimum $M\in\mathcal M$, so there exist at least one edge of $\mathcal E$ exiting from each $Q\in\mathcal Q,\,Q\neq M$. We choose arbitrarily one edge $(Q,Q')$ exiting from each $Q\neq M$ and, since the graph is acyclic, we obtain an arborescence of $(\mathcal Q, \mathcal E)$ rooted at $M$.
Moreover for each edge of this arborescence we can choose an edge $e_Q=(x,x')\in \mathfrak E$ with $x\in Q$, $x'\in Q'$,
and then we choose an arborescence $\tau_Q$ rooted at $x$ of the strongly connected graph $(Q,\mathfrak E[Q])$.
Finally, we choose an arborescence $\tau_M$ of the strongly connected graph $(M,\mathfrak E[M])$.
Considering altogether the edges of $\tau_M$, $\{\tau_Q\}_{Q\neq M}$ and $\{e_Q\}_{Q\neq M}$, we obtain an arborescence on $(\mathfrak V,\mathfrak E)$.
\smallskip
	
Conversely, if there exists at least one arborescence of $(\mathfrak V,\mathfrak E)$, say rooted at $x$, then the only minimal class in $(\mathcal Q, \mathcal E)$ can be the class containing $x$, since from any other vertex of $\mathfrak V$ there exists a directed path leading to $x$.
\smallskip
		
Now, assume the set of arborescences is not empty. The denominator in \eqref{markovchaingeneral} is non zero and the formula is well defined.
Inserting \eqref{markovchaingeneral} into \eqref{stazcond} (written with rates $R_N$) and simplifying the denominator, we obtain on both sides the same weight $R_N(\mathcal U_x)$.  $\mathcal U_x$ is the set of spanning subgraphs of $(\mathfrak V,\mathfrak E)$ where there is exactly one edge going out from every vertex and $x$ belongs to the unique cycle.
%
\end{proof}


\subsection{Existence of the limiting invariant measure} We require further assumptions on the graphs $(V,E,r_N)$ (inspired by those introduced in \cite{BL, LX} and later adopted in \cite{FK}): 

\begin{assumption}\label{FRL}
 The rates $
 r_N$ are such that for every pair of collections of edges $E',E''\subseteq E$ with $|E'|=|E''|$, the limit
\begin{equation}
    \lim_{N \to \infty} \frac{\prod_{e\in E'} r_N(e)}{\prod_{e\in E''} r_N(e)}
\end{equation}
exists and is either finite and positive, zero or $\infty$.
\end{assumption}

\begin{assumption}\label{FRL2}
The rates $r_N$ are such that for every pair of collections of non-negative integer numbers $(m_e\geq 0)_{e\in E}$ and $(n_e\geq 0)_{e\in E}$ such that  $\sum_{e\in E}m_e=\sum_{e\in E}n_e$, the limit
\begin{equation}
		\lim_{N \to \infty} \frac{\prod_{e\in E}r_N(e)^{m_e}}{\prod_{e\in E}r_N(e)^{n_e}}
\end{equation}
exists and is either finite and positive, zero or $\infty$.
\end{assumption}
Assumption \ref{FRL2} is stronger than Assumptions \ref{FRL} and \ref{assumption1}. Rates $r_N$ with polynomial or exponential growth in $N$ satisfy such hypotheses.

\smallskip

Since every arborescence has the same number of edges $|V|-1$, Assumption \ref{FRL} guarantees that for every pair $\tau, \tau'\in \mathcal T$ the limit $\lim_{N\to \infty}\frac{r_N(\tau)}{r_N(\tau')}$ exists.
\smallskip

This means that, as done for edges after Assumption \ref{assumption1}, we can partition the set $\mathcal T$ of arborescences into equivalence classes with a total order relation with respect to the \textit{scale} or \textit{velocity} of their weights.
We denote by $\mathcal T^F$ the \textit{fast} equivalence class of arborescences: $\tau\in\mathcal T^F$ if and only if $\tau\in\mathcal T$ and for every $\tau'\in\mathcal T$ the limit $\lim_{N\to \infty}\frac{r_N(\tau')}{r_N(\tau)}$ is finite.

Similarly, $\mathcal T^F_x$ denotes the fastest equivalence class of arborescences rooted at $x$. Notice that $\mathcal T^F_x$ is not empty but may not contain arborescences of $\mathcal T^F$: it is possible that the fastest arborescences rooted at $x$ are slower than the fast arborescences with arbitrary root.

Identifying the fast arborescences and the corresponding scale is not an easy task and it is essentially one of the aims of this paper. We stress that fast arborescences are not necessarily those consisting solely of fast edges (this occurs only when such arborescences exist), but those belonging to the fastest velocity class of arborescences.

\begin{proposition}[Existence of the limiting invariant measure]
\label{existence}
Let $(V,E,r_N)$ be a strongly connected weighted graph satisfying Assumption \ref{FRL}.
Let $\pi_N$ be the unique invariant measure.
Then $\pi=\lim_{N \rightarrow \infty} \pi_N$ exists and is a probability measure, moreover
\begin{equation} \label{limF}
	 \pi(x) \,=\, 
	 \lim_{N\to \infty}\frac{r_N\left(\mathcal T^F_x\right)}{r_N\left(\mathcal T^F\right)} \,,\quad x\in V\,.
\end{equation}
This is non-zero when $\mathcal T_x^F\cap\mathcal T^F$ is not empty.
\end{proposition}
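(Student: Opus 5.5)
The plan is to start from the Markov chain tree theorem (Theorem \ref{matrixtreeth}), which gives $\pi_N(x)=r_N(\mathcal T_x)/r_N(\mathcal T)$ for every $N$, and to isolate the dominant arborescences in both numerator and denominator. Fix once and for all a reference arborescence $\tau^*\in\mathcal T^F$ and divide both sums by $r_N(\tau^*)$. Since all arborescences have exactly $|V|-1$ edges, Assumption \ref{FRL} applies to every pair $\tau,\tau^*$. Hence each ratio $\rho(\tau):=\lim_{N\to\infty} r_N(\tau)/r_N(\tau^*)$ exists in $[0,\infty]$. By definition of $\mathcal T^F$ this limit is finite for every $\tau$, positive exactly when $\tau\in\mathcal T^F$, and zero otherwise.

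The steps are then as follows.
\begin{enumerate}
\item The denominator, normalized by $r_N(\tau^*)$, is a finite sum of terms with finite limits. Its limit is therefore $\sum_{\tau\in\mathcal T^F}\rho(\tau)$, which is strictly positive and finite because $\tau^*$ contributes $\rho(\tau^*)=1$.
\item Likewise, the normalized numerator $r_N(\mathcal T_x)/r_N(\tau^*)$ converges to $\sum_{\tau\in\mathcal T_x\cap\mathcal T^F}\rho(\tau)$, which is finite. So $\pi(x)$ exists and equals the ratio of these two limits.
\item Summing over $x$ and using that the $\mathcal T_x$ partition $\mathcal T$, we get $\sum_x\pi(x)=1$. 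Nonnegativity is clear, so $\pi$ is a probability measure.
\item The limit is nonzero exactly when $\mathcal T_x\cap\mathcal T^F\neq\emptyset$.
\end{enumerate}

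The only delicate point is matching this with formula \eqref{limF}, which is written with $\mathcal T^F_x$ (the fastest class among arborescences rooted at $x$) rather than $\mathcal T_x\cap\mathcal T^F$. Here I would argue by cases.

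\emph{Case 1: $\mathcal T^F_x\cap\mathcal T^F\neq\emptyset$.} Then $\mathcal T^F_x=\mathcal T_x\cap\mathcal T^F$. Indeed, every $\tau\in\mathcal T_x$ is no faster than an element of $\mathcal T^F$, so the fastest class rooted at $x$ coincides with the globally fast ones rooted at $x$. The numerator limits then agree.

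\emph{Case 2: $\mathcal T^F_x\cap\mathcal T^F=\emptyset$.} Then every element of $\mathcal T^F_x$, and hence every element of $\mathcal T_x$, is strictly slower than $\tau^*$. Both $\lim r_N(\mathcal T_x)/r_N(\mathcal T^F)$ and $\lim r_N(\mathcal T^F_x)/r_N(\mathcal T^F)$ are then $0$, since they are finite sums of terms each of which tends to zero.

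In both cases, replacing $r_N(\mathcal T)$ by $r_N(\mathcal T^F)$ in the denominator is justified by Step 1: their ratio tends to $1$, because the non-fast terms are $o(r_N(\tau^*))$. This gives \eqref{limF}, together with the final claim that $\pi(x)>0$ precisely when $\mathcal T^F_x\cap\mathcal T^F\neq\emptyset$.

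I expect no real obstacle. The only care needed is the bookkeeping of the two notions of fastness, and the observation that Assumption \ref{FRL} is needed only because arborescences have equal cardinality, so that the ratios of their weights are comparable.
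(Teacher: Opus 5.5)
Your proof is correct and follows essentially the paper's argument. Both rest on the Markov chain tree theorem and on Assumption \ref{FRL} (applicable because all arborescences have $|V|-1$ edges) to show that only fast arborescences survive in the limit, and normalizing by a fixed reference $\tau^*\in\mathcal T^F$ is a bookkeeping variant of the paper's factorization \eqref{manga}. Your explicit case split between $\mathcal T^F_x$ and $\mathcal T_x\cap\mathcal T^F$ is slightly more careful than the paper: its auxiliary estimate \eqref{toy} assumes $\mathcal G^F\cap\mathcal T^F\neq\emptyset$, so it does not literally cover the term $r_N(\mathcal T_x\setminus\mathcal T^F_x)/r_N(\mathcal T^F_x)$ when $\mathcal T^F_x$ contains no globally fast arborescence.
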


\begin{proof}
The fact that the limit $\pi$ -if exists- is a probability measure is guaranteed since our graph is finite. 
We rewrite the Markov chain tree theorem \eqref{markovchaingeneral} as
\begin{equation}\label{manga}
\pi_N(x) \,=\,
\frac{r_N\left(\mathcal T_x\right)}{r_N\left(\mathcal T\right)} \,=\,
\frac{r_N\left(\mathcal T^F_x\right)}{r_N\left(\mathcal T^F\right)}\,\; \frac{1+\frac{r_N\left(\mathcal T_x \setminus \mathcal T_x^F\right)}{r_N\left(\mathcal T^F_x\right)}}{1+\frac{r_N\left(\mathcal T \setminus \mathcal T^F\right)}{r_N\left(\mathcal T^F\right)}}\;.
\end{equation}

Let $\mathcal G,\,\mathcal G^F \subseteq \mathcal T$ be two collections of arborescences such that $\mathcal G^F\cap \mathcal T^F$ is not empty. Let $g\in \mathcal G$ be one among the fastest arborescences in $\mathcal G$ and let $g^*\in \mathcal G^F\cap \mathcal T^F$ be a fast arborescence.
We have
\begin{equation}\label{toy}
\frac{r_N(\mathcal G)}{r_N(\mathcal G^F)}\,=\, \frac{r_N\left(g\right)}{r_N\left(g^*\right)}\,\; \frac{1+\sum_{h\in \mathcal G,\, h\neq g}\frac{r_N(h)}{r_N(g)}}{1+\sum_{h^*\in \mathcal G^F,\, h^*\neq g^*}\frac{r_N(h^*)}{r_N(g^*)}}\;.
\end{equation} 
We use Assumption \ref{FRL}. When $N\to\infty$ the first ratio on the right hand side converges to a finite positive value if $g$ is a fast arborescence, i.e. $g\in\mathcal T^F$, or to zero otherwise. The second ratio on the right hand side converges to a finite positive value. Therefore expression \eqref{toy} converges to a finite value, possibly zero.
	 	
From this we deduce that all terms $\frac{r_N\left(\mathcal T^F_x\right)}{r_N\left(\mathcal T^F\right)}$, $\frac{r_N\left(\mathcal T_x \setminus \mathcal T_x^F\right)}{r_N\left(\mathcal T^F_x\right)}$, $\frac{r_N\left(\mathcal T \setminus \mathcal T^F\right)}{r_N\left(\mathcal T^F\right)}$ in \eqref{manga} converge and the limit of the first one may be either positive or zero, while the other two vanish.
Therefore we conclude \eqref{limF}.
\end{proof}

The argument above shows that only fast arborescences contribute to the limit of $\pi_N$ as $N\to\infty$. Consequently, if one shows that a certain class of arborescences is not fast, then it can be neglected from the computation.
\smallskip

\section{Main result}
\label{sec: main}

Let $(V,E,r_N)$ be our strongly connected weighted graph. We call $(V,E_F)$ the \emph{fast subgraph} of our original graph $(V,E)$. Recall that $E_F$ is the class of fast edges of $E$.
 $(V,E_F)$ is not necessarily strongly connected and we can construct the DAG of equivalence classes, as illustrated in Section \ref{sec: notation} (see Figure \ref{primografo}). We denote this DAG by $\left(\mathcal Q, \mathcal E\right)$ and by $Q\in \mathcal Q$ a generic equivalence class: $Q$ is an equivalence class with respect to communication using fast edges only.
We denote by $\mathcal M\subseteq \mathcal Q$ the set of local minima of the partial order induced by the DAG and by $M\in \mathcal M$ a generic element.
\begin{figure}
	\centering
	\mbox{ \xygraph{
			!{<0cm,0cm>;<1cm,0cm>:<0cm,1cm>::}
	!{(0,0) }*+{\bullet}="a"
			!{(2,0.6) }*+{\bullet}="b"
			!{(4,0) }*+{\bullet}="c"
			!{(3,-1.3) }*+{\bullet}="d"
			!{(1,2) }*+{\bullet}="e"
			!{(3,2) }*+{\bullet}="f"
			!{(4.2,1.6) }*+{\bullet}="g"
			!{(0.3,-1.7) }*+{\bullet}="h"
			!{(5,-1.7) }*+{\bullet}="i"
"a":@/^0cm/"b"
"a":@/^0cm/@{~>}"e"  
"e":@/^0cm/@{~>}"f"  
"b":@/^0cm/"d"
"c":@/^0cm/"d"
"d":@/^0cm/@{~>}"a"
"c":@/^0.2cm/"i"
"i":@/^0.2cm/@{~>}"c"  
"f":@/^0cm/"g"
"g":@/^0cm/"c"
"h":@/^0cm/"a"
"e":@/^0cm/"b"
"d":@/^0cm/"h"
"c":@/^0cm/"f"
		}
	}
	\hfill
	\mbox{ \xygraph{
			!{<0cm,0cm>;<1cm,0cm>:<0cm,1cm>::}
			!{(0,0) }*+{\bullet}="a"
			!{(2,0.6) }*+{\bullet}="b"
			!{(4,0) }*+{\bullet}="c"
			!{(3,-1.3) }*+{\bullet}="d"
			!{(1,2) }*+{\bullet}="e"
			!{(3,2) }*+{\bullet}="f"
			!{(4.2,1.6) }*+{\bullet}="g"
			!{(0.3,-1.7) }*+{\bullet}="h"
			!{(5,-1.7) }*+{\bullet}="i"
			!{(1,2) }*={\xybox{*+=[blue]=<0.5cm,0.5cm>\frm{e}}}
			!{(3.7,1.2) }*={\xybox{*+=[blue]=<2cm,2.8cm>\frm{e}}}
			!{(1.3,-0.6) }*={\xybox{*+=[green]=<4cm,3cm>\frm{e}}}
			!{(5,-1.7) }*={\xybox{*+=[green]=<0.5cm,0.5cm>\frm{e}}}
			"a":@/^0cm/"b"
			"b":@/^0cm/"d"
			"c":@/^0cm/"d"
			"c":@/^0cm/"i"
			"f":@/^0cm/"g"
			"g":@/^0cm/"c"
			"h":@/^0cm/"a"
			"e":@/^0cm/"b"
			"d":@/^0cm/"h"
			"c":@/^0cm/"f"
		}
	}
\caption{On the left: a strongly connected digraph $(V,E)$, fast edges are represented by straight lines and slow edges (i.e., every edge that is not fast) by wavy lines. 
On the right: the fast subgraph $(V,E_F)$, equivalence classes are enclosed by colored circles. Each circle corresponds to a point of the associated DAG, green ones are the local minima.}
\label{primografo}
\end{figure}
In Section \ref{sec: iteration}, we will add some superscript to indicate that this is the first step of an iterative procedure: graphs constructed at iteration step $i$ will have a superscript $(i)$. 

We now proceed in two steps. First, we define a \emph{reduced dynamics} on the vertices of $V$ belonging to the minimal classes $M\in\mathcal M$; this corresponds to the trace process as defined for example in \cite{L1}. Then, using the reduced dynamics, we introduce an \emph{effective dynamics}, which is a Markov chain on $\mathcal M$.

\subsection{Trace process}

Let us denote by $\hat V$ the support of $\mathcal M$, i.e. the set of all vertices belonging to the minimal classes:
\begin{equation*}
    \hat V:=\bigcup_{M\in \mathcal M} M \,=\, \textrm{Su}\left(\mathcal M\right) \,\subseteq V\,.
\end{equation*}
It is worth noticing that $\hat V$ is the set of recurrent points in the fast graph $(V,E_F)$, while $V \setminus \hat V$ is the set of transient points in the fast graph.
\smallskip

We consider the continuous-time Markov chain $X_t^N$ with transition graph $(V,E,r_N)$ and following \cite{BL, LX, L1} we introduce the \emph{trace process} on $\hat V$. This is defined as
\begin{equation}
 \hat X^N_t:=X_{T^{-1}(t)}^N \,\in \hat{V} \;,
\end{equation}
where $T(t)$ is the amount of time spent in $\hat V$ up to instant $t$ and $T^{-1}(t)$ denotes its generalized inverse:
\begin{equation*}
 T(t):= \int_0^t \mathbb{I}\big(X_s^N \in \hat V\big) \,\d s \;, \qquad T^{-1}(t):=\sup\left\{s\geq0\,|\, T(s)\leq t\right\}\,.
\end{equation*}
$\hat X^N_t$ is again a continuous-time Markov chain with some useful properties \cite{L1}.

Let us consider the probability that, starting from the vertex $z \in V$, the original process $X_t^N$ enters the set $\hat V$ for the first time at the vertex $y\in \hat V$: 
\begin{equation}\label{probas}
	P_{N}^{\hat V}( y\,|\, z) \,:=\, \mathbb P\left(X^N_\theta=y \,\big|\, X^N_0=z\right)\,,\quad  \theta:=\inf\{t>0\,:\,X^N_t\in \hat V\}\,.
\end{equation}
When $z\in \hat V$, then $P_{N}^{\hat V}( y\,|\, z)$ is $0$ if $z\neq y$ and $1$ if $z=y$.
The trace process $\hat X^N_t$ has transition rates given by the following weights $\hat r_N$ \cite{BL, LX, L1}.

\begin{definition}
   For every $x,y \in \hat V$ with $x\neq y$ the \emph{reduced rates} is
\begin{equation}\label{reduced}
\begin{split}
    \hat{r}_N(x,y)\,:&=\, \sum_{z \in V }\,r_N(x,z)\; P_{N}^{\hat V}(y\,|\,z) \\
    &=\,
    r_N(x,y) \,+\sum_{z\in V\setminus\hat V} r_N(x,z)\; P_{N}^{\hat V}(y\,|\,z)\,.
\end{split}
\end{equation}
The \textit{reduced edge set} is $\hat E:=\{(x,y)\in \hat V\times\hat V \,:\, \hat r_N(x,y)>0 \,\}\,$ and we call $(\hat V,\hat E,\hat r_N)$ the \textit{reduced weighted graph}.
\end{definition}
The reduced graph $(\hat V, \hat E, \hat r_N)$ is strongly connected, since $(V,E)$ was assumed strongly connected.
We denote by $\hat{\pi}_N$ the associated invariant measure on $\hat V$.
\begin{figure}
	\centering
	\mbox{ \xygraph{
			!{<0cm,0cm>;<1cm,0cm>:<0cm,1cm>::}
			!{(0,0) }*+{\bullet}="a"
			!{(2,0.6) }*+{\bullet}="b"
			!{(3,-1.3) }*+{\bullet}="d"
			!{(0.3,-1.7) }*+{\bullet}="h"
			!{(5,-1.7) }*+{\bullet}="i"
			!{(1.3,-0.6) }*={\xybox{*+=[green]=<4cm,3cm>\frm{e}}}
			!{(5,-1.7) }*={\xybox{*+=[green]=<0.5cm,0.5cm>\frm{e}}}
			"a":@/^0cm/"b"
			"b":@/^0cm/"d"
			"d":@/^0cm/@{~>}"a"
			"a":@/^2.6cm/@{~>}"i"
				"a":@/^0.3cm/@{~>}"b"
				"a":@/^1.8cm/@{~>}"d"
				"i":@/^0cm/@{~>}"d"
			"h":@/^0cm/"a"
			"d":@/^0cm/"h"
		}
	}
	\caption{The reduced graph $(\hat V,\hat E)$ obtained from the graph $(V,E)$ of Figure \ref{primografo}. The vertex set is the support of minimal classes, while the reduced edges represent the paths connecting two vertices on the original graph passing through the non-minimal classes.} 
\label{secondografo}
\end{figure}

\begin{remark}\label{slower}
When $x,y\in\hat V$ belong to different minimal classes $M,M'\in\mathcal M$, the rate $\hat r_N(x,y)$ is slower compared to the fast edges in $E_F$, namely $\lim_{N\to\infty}\frac{\hat r_N(x,y)}{r_N(e)}=0$ for $e\in E_F$. More precisely, it has velocity slower or equal to the class $E_{F-1}$.
Indeed by definition, the edges going out from the minimal class $M$ cannot be fast, then the result follows from the second expression in \eqref{reduced}.
\end{remark}

The hitting probabilities \eqref{probas}, when $z\not\in \hat V$, have also a representation in terms of directed spanning forests \cite{P}:
\begin{equation}
\label{Pitman}
    P_{N}^{\hat V}(y\,|\,z) \,=\, \frac{r_N\left(\mathcal F_{\hat V}(z\to y)\right)}{r_N\left(\mathcal F_{\hat V}\right)}\;,
\end{equation}
where $\mathcal{F}_{\hat V}$ is the set of directed spanning forests of $(V,E)$ composed by $|\hat V|$ non-spanning arborescences rooted at the vertices of $\hat V$, and $\mathcal{F}_{\hat V}(z \to y)$ is the subset of $\mathcal F_{\hat V}$ such that $z$ belongs to the component rooted at $y$.
Recall that the weight of a set of forests (or generally of any family of subgraphs) is defined as
\begin{equation}
    r_N(\mathcal{F}_{\hat V}) = \sum_{f \in \mathcal{F}_{\hat V}} \prod_{(x,y) \in f} r_N(x,y)\,.
\end{equation}
Note that every forest $f\in\mathcal F_{\hat V}$ has the same number of edges, that is $|V|-|\hat V|$.
Substituting \eqref{Pitman} into \eqref{reduced} yields, by direct inspection,
\begin{equation} \label{reduced1}
    \hat{r}_N(x,y) \,=\, 
     \frac{r_N\left(\mathcal F_{\hat V\setminus\{x\}}(x\to y)\right)}{r_N\left(\mathcal F_{\hat V}\right)} \,.
\end{equation}
\smallskip

By definition of trace process it is easy to see that, given $U_2\subset U_1\subset V$ it is equivalent to construct the trace process directly from $V$ to $U_2$ or to trace from $V$ to $U_1$ first and then take the trace from $U_1$ to $U_2$.
In particular, the trace process on $\hat V$ can be obtained by performing $|V|-|\hat V|$ successive single-node reductions.

When we reduce the process on $V\setminus\{s\}$ removing a single node $s$, we obtain rates
\begin{equation} \label{single_reduced}
	r_N'(x,y) \,=\, r_N(x,y) \,+\, \frac{r_N(x,s)\;r_N(s,y)}{\sum_{z\in V\setminus\{s\}}r_N(s,z)}
\end{equation}
for $x,y\in V\setminus\{s\}$, $x\neq y$. The latter is a simple application of \eqref{reduced} or \eqref{reduced1}. 
Such single-node reduction of the weighted graph is a generalized \textit{star-delta reduction} \cite{CPV,GH} and will be discussed in Section \ref{sec:stardelta}.
Removing one single node at a time for every node in $V\setminus \hat V$ we obtain the desired graph $(\hat V, \hat E, \hat r_N)$.
\smallskip

Now we show that Assumption \ref{FRL2} is preserved under the reduction operation.

\begin{proposition}\label{lollodorme}
Let $(V,E,r_N)$ be a strongly connected weighted graph satisfying Assumption \ref{FRL2}.
Then the reduced graph $(\hat V,\,\hat E,\, \hat r_N)$ defined by \eqref{reduced} satisfies Assumption \ref{FRL2}.
\end{proposition}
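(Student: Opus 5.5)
The plan is to use the forest representation \eqref{reduced1}. Set $k:=|V|-|\hat V|$. For every reduced edge $(x,y)\in\hat E$, \eqref{reduced1} writes $\hat r_N(x,y)=A_{xy}(N)/B(N)$. Here $B(N)=r_N(\mathcal F_{\hat V})$ is a finite nonempty sum of products of exactly $k$ original rates. The numerator $A_{xy}(N)=r_N(\mathcal F_{\hat V\setminus\{x\}}(x\to y))$ is a finite sum of products of exactly $k+1$ original rates, and it is nonempty precisely because $(x,y)\in\hat E$. So every reduced rate is a ratio of two homogeneous "polynomials" in the variables $(r_N(e))_{e\in E}$ with nonnegative integer coefficients, of degrees $k+1$ and $k$. (An alternative is to induct on single-node reductions via \eqref{single_reduced}, which have the same form with $k=1$. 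I prefer the global formula, since it avoids checking that the intermediate graphs are again strongly connected.)

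The key lemma is the following. Let $S(N)=\sum_{j} \prod_{e}r_N(e)^{a^j_e}$ be a finite nonempty sum of monomials all of the same total degree $d$. Then there is a monomial $\mu(N)=\prod_e r_N(e)^{a^*_e}$ in it such that $S(N)/\mu(N)\to c\in[1,\infty)$. To prove it, note that by Assumption \ref{FRL2} any two monomials of equal degree have a ratio converging to $0$, to a positive finite value, or to $\infty$. This induces a total preorder on the finitely many monomials, so a maximal one $\mu$ exists. Every other monomial divided by $\mu$ then has a finite limit. Since all terms are positive there is no cancellation, and the limit is at least $1$ because $\mu$ itself appears in the sum. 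Applying the lemma to each $A_{xy}$ and to $B$ gives dominant monomials $\alpha_{xy}$ of degree $k+1$ and $\beta$ of degree $k$, with $\hat r_N(x,y)\sim c_{xy}\,\alpha_{xy}/\beta$ for constants $c_{xy}\in(0,\infty)$.

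Now take nonnegative integers $(m_{\hat e})$ and $(n_{\hat e})$ indexed by $\hat E$ with $\sum m_{\hat e}=\sum n_{\hat e}=:D$. Then
\begin{equation}
\frac{\prod_{\hat e}\hat r_N(\hat e)^{m_{\hat e}}}{\prod_{\hat e}\hat r_N(\hat e)^{n_{\hat e}}}
\,\sim\, C\,\frac{\prod_{\hat e}\alpha_{\hat e}^{m_{\hat e}}\;\beta^{-D}}{\prod_{\hat e}\alpha_{\hat e}^{n_{\hat e}}\;\beta^{-D}}
\,=\, C\,\frac{\prod_{\hat e}\alpha_{\hat e}^{m_{\hat e}}}{\prod_{\hat e}\alpha_{\hat e}^{n_{\hat e}}}\,,
\end{equation}
with $C\in(0,\infty)$. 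The powers of $\beta$ cancel exactly because the two exponent totals are equal. What remains is a ratio of two monomials in the original rates, each of total degree $D(k+1)$. Expanding them gives exponent families $(m'_e)_{e\in E}$ and $(n'_e)_{e\in E}$ with equal sums, so Assumption \ref{FRL2} for $(V,E,r_N)$ says this ratio converges to $0$, to a positive finite value, or to $\infty$. Multiplying by $C$ preserves this trichotomy, which proves the claim.

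I expect the main obstacle to be the bookkeeping in the last step. One must check that the degree counts really match, and that the asymptotic equivalences "$\sim$" may be multiplied and raised to integer powers, which is fine because the limits $c_{xy}$ are finite and positive. One must also check that the lemma genuinely requires equal degrees, which is guaranteed because all forests in $\mathcal F_{\hat V}$ have the same number $k$ of edges.
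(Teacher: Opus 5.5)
Your proof is correct and follows essentially the same route as the paper. Both use the forest representation \eqref{reduced1} and cancel the powers of $r_N(\mathcal F_{\hat V})$, which is possible because the total exponents are equal. Both then use Assumption \ref{FRL2} to extract dominant monomials, reducing the ratio to one between two monomials of equal degree $D(|V|-|\hat V|+1)$. The only difference is the order of operations: you extract the dominant monomial of each $\hat r_N(\hat e)$ before taking products, while the paper first expands the products into two sums of monomials and then extracts the dominant term of each sum.
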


\begin{proof}
Let $\underline {\hat m}=({\hat m}_e)_{e\in \hat E}$, $\underline {\hat n}=({\hat n}_e)_{e\in \hat E}$ two vectors of non-negative integers, both having sum of entries equal to $K$. 
Using expression \eqref{reduced1}, simplifying the denominators and observing that the numerators $r_N\big(\mathcal F_{\hat V\setminus\{x\}}(x\to y)\big)$ are sums of products of $|V|-|\hat V|+1$ edge rates, we may write
\begin{equation}
	\frac{\prod_{e\in \hat E}\, \hat r_N(e)^{\hat m_e}}{\prod_{e\in \hat E}\, \hat r_N(e)^{\hat n_e}} \,=\,
\frac{\sum_{\underline m\in A}\,c(\underline m)\,\prod_{e\in E}\,r_N(e)^{m_e}}{\sum_{\underline n\in B}\,d(\underline n)\,\prod_{e\in E}\,r_N(e)^{n_e}} \,=:\rho
\end{equation}
for suitable sets $A,B$ of vectors $\underline m=(m_e\geq0)_{e\in E}$, $\underline n=(n_e\geq0)_{e\in E}$ respectively such that $\sum_{e\in E}m_e=\sum_{e\in E}n_e=K\,(|V|-|\hat V|+1)$ 
and suitable coefficients $c(\underline m),\,d(\underline n)>0$.
\smallskip

By Assumption \ref{FRL2} for the original rates $r_N$, there exist $\underline m^*\in A$ and $\underline n^*\in B$ such that
$\frac{\prod_{e\in E}r_N(e)^{m_e}}{\prod_{e\in E}r_N(e)^{m^*_e}}$ and  
$\frac{\prod_{e\in E}r_N(e)^{n_e}}{\prod_{e\in E}r_N(e)^{n_e^*}}$
have finite limits as $N\to\infty$ for every $\underline m\in A$ and $\underline n\in B$.
Therefore $\rho$ rewrites as
\begin{equation}
	\rho \,=\,
	\frac{c(\underline m^*)\,\prod_{e\in E}r_N(e)^{m^*_e}}{d(\underline n^*)\,\prod_{e\in E}r_N(e)^{n^*_e}}\;\ 
	 \frac{1+\sum_{\underline m\in A,\,\underline m\neq \underline m^*}\frac{c(\underline m)\,\prod_{e\in E}r_N(e)^{m_e}}{c(\underline m^*)\,\prod_{e\in E}r_N(e)^{m^*_e}}}
	 {1+\sum_{\underline n\in B,\,\underline n\neq \underline n^*}\frac{d(\underline n)\,\prod_{e\in E}r_N(e)^{n_e}}{d(\underline n^*)\,\prod_{e\in E}r_N(e)^{n^*_e}}} \;.
\end{equation}
Using Assumption \ref{FRL2}, the first ratio on the right-hand side has a limit as $N\to\infty$, and the second ratio converges to a finite positive value. Then $\rho$ has a limit.
%
\end{proof}

\subsection{Effective dynamics}
We are going to define the effective rates $\bar r_N$ of an irreducible Markov chain on the state space $\mathcal M$. We use the reduced rates as done in metastability \cite{BL, LX, L1}. 
\smallskip

Given a minimal class $M\in \mathcal M$, we denote $\mu_{M}^N$ the invariant measure of the Markov chain with transition graph $\left(M, E_F[M], r_N\right)$. This is the strongly connected subgraph of $(V,E_F)$ formed by  fast edges with both endpoints in $M$, their rates coincide with $r_N$. 
By the Markov chain tree theorem we have
\begin{equation} \label{markovchaintree4M}
\mu_{M}^N(x) \,=\, \frac{r_N\left(\mathcal T_x\left[M,E_F[M]\right]\right)}{r_N\left(\mathcal T\left[M,E_F[M]\right]\right)} \,,
\end{equation}
where $\mathcal T\left[M,E_F[M]\right]$ denotes the set of arborescences of the subgraph $\left(M, E_F[M]\right)$.
We also denote $\mu_{M}:=\lim_{N\to+\infty}\mu_M^N$ the corresponding limiting measure, that exists by the same argument of Proposition \ref{existence} using Assumption \ref{FRL}.

\smallskip

Motivated by the averaging principle, we define the effective rates by averaging the reduced rates over the fast dynamics:

\begin{definition}\label{defeff}
For every $M_1,M_2\in \mathcal M$ with $M_1\neq M_2$ the \textit{effective rate} is
\begin{equation}
    \label{ratesefficacissimi}
        \bar r_N(M_1,M_2) :=\sum_{y_1 \in M_1,\,y_2 \in M_2} \mu_{M_1}^N(y_1)\;\hat{r}_N(y_1,y_2) \,.
\end{equation}
The \textit{effective edge set} is $\bar E:=\{(M_1,M_2)\in\mathcal M\times \mathcal M \,:\, \bar r_N(M_1,M_2)>0 \}\,$ and we call $(\mathcal M, \bar E, \bar r_N)$ the \textit{effective weighted graph}.
This is the transition graph of an \textit{effective Markov chain} $\bar X^{N}_t$ on $\mathcal M$.
\end{definition}

The effective graph $(\mathcal M,\bar E,\bar r_N)$ is strongly connected, since the reduced graph $(\hat V, \hat E,\hat r_N)$ is so and each $\mu_M^N$ is strictly positive on $M$. We denote by $\bar\pi_N$ the associated invariant measure on  $\mathcal M$. 
\begin{figure}
	\centering
	\mbox{ \xygraph{
				!{<0cm,0cm>;<1cm,0cm>:<0cm,1cm>::}
		!{(3.1,-0.9) }="x"
			!{(2.9,-1.5) }="p"
				!{(4.7,-1.8) }="q"
					!{(5,-1.4) }="b"
	!{(1.3,-1) }*={\xybox{*+=[green]=<3.4cm,2.4cm>\frm{e}}}
	!{(5,-1.7) }*={\xybox{*+=[green]=<0.5cm,0.5cm>\frm{e}}}
		"x":@/^0.22cm/@{~>}"b"
			"q":@/^0.2cm/@{~>}"p"
	}
} \vskip 4pt
\caption{The effective graph $(\mathcal M,\bar E)$ obtained from Figures \ref{primografo},  \ref{secondografo}. The nodes are the minimal classes. The effective edges are slower compared to the fast edges $E_F$ of the original graph $(V,E)$.}
\label{terzografo}
\end{figure}

\begin{remark}\label{slower2}
The effective rates are slower compared to the fast edges in $E_F$, namely $\lim_{N\to\infty}\frac{\bar r_N(M_1,M_2)}{r_N(e)}=0$ for every $(M_1,M_2)\in\bar E$ and $e\in E_F$. More precisely, they have velocity slower or equal to the edges of class $E_{F-1}$.
This follows by definition \eqref{ratesefficacissimi} and Remark \ref{slower}.
\end{remark}

\begin{proposition} \label{iodormo}
Let $(V,E,r_N)$ be a strongly connected weighted graph satisfying Assumption \ref{FRL2}.
Then the effective graph $(\mathcal M,\,\bar E,\, \bar r_N)$ defined by \eqref{ratesefficacissimi} also satisfies Assumption \ref{FRL2}.
\end{proposition}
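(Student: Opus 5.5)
We follow the proof of Proposition \ref{lollodorme}: write every product of effective rates as a ratio of two sums of monomials in the original rates $r_N(e)$, with positive coefficients and all monomials of the same total degree. The dominant-term argument from that proof then gives existence of the limit. Concretely, take two vectors $\underline{\bar m}=(\bar m_e)_{e\in\bar E}$ and $\underline{\bar n}=(\bar n_e)_{e\in\bar E}$ of non-negative integers, each with entries summing to $K$. We must show that
\begin{equation}
\bar\rho:=\frac{\prod_{e\in\bar E}\bar r_N(e)^{\bar m_e}}{\prod_{e\in\bar E}\bar r_N(e)^{\bar n_e}}
\end{equation}
has a limit in $[0,\infty]$.

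The first step is to make each effective rate a homogeneous ratio. Combining \eqref{ratesefficacissimi} with \eqref{markovchaintree4M} and \eqref{reduced1} gives
\begin{equation}
\bar r_N(M_1,M_2)=\frac{\sum_{y_1\in M_1,\,y_2\in M_2} r_N\left(\mathcal T_{y_1}[M_1,E_F[M_1]]\right)\, r_N\left(\mathcal F_{\hat V\setminus\{y_1\}}(y_1\to y_2)\right)}{r_N\left(\mathcal T[M_1,E_F[M_1]]\right)\, r_N\left(\mathcal F_{\hat V}\right)}.
\end{equation}
In the numerator, every term is a product of $(|M_1|-1)+(|V|-|\hat V|+1)$ original rates. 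In the denominator, every term is a product of $(|M_1|-1)+(|V|-|\hat V|)$ original rates. So numerator and denominator are polynomials in the $r_N(e)$ with positive integer coefficients, each homogeneous, and their degrees differ by exactly one.

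The second step is to check that the degrees in $\bar\rho$ balance. Put the whole of $\bar\rho$ over a common fraction: the numerator of $\bar\rho$ collects the effective-rate numerators raised to $\bar m_e$ together with the effective-rate denominators raised to $\bar n_e$, and the denominator of $\bar\rho$ collects the rest. The two sides then have the same total degree, because both totals equal
\begin{equation}
\sum_e \bar m_e d^{num}_e+\sum_e \bar n_e d^{den}_e \quad\text{and}\quad \sum_e \bar n_e d^{num}_e+\sum_e \bar m_e d^{den}_e,
\end{equation}
and since $d^{num}_e-d^{den}_e=1$ for every $e$ their difference is $\sum_e(\bar m_e-\bar n_e)=K-K=0$. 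The dependence of $d^{den}_e$ on the source class $M_1$ is harmless, because it cancels edge by edge. This degree bookkeeping is the only delicate point, and the difference-one structure is what makes it work.

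Finally, $\bar\rho=\sum_{\underline m\in A}c(\underline m)\prod_{e\in E} r_N(e)^{m_e}\big/\sum_{\underline n\in B}d(\underline n)\prod_{e\in E} r_N(e)^{n_e}$ with positive coefficients and equal total degree for every $\underline m\in A$ and $\underline n\in B$. By Assumption \ref{FRL2} we can pick dominant monomials $\underline m^*\in A$ and $\underline n^*\in B$. Factoring them out, exactly as in the proof of Proposition \ref{lollodorme}, shows that $\bar\rho$ converges in $[0,\infty]$, which proves the claim.
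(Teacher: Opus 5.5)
Your proof is correct and follows the paper's argument. The paper also expands $\bar r_N$ via \eqref{ratesefficacissimi}, \eqref{reduced1} and \eqref{markovchaintree4M} into a ratio of equal-degree polynomials with positive coefficients in the $r_N(e)$, and concludes with the dominant-monomial argument of Proposition \ref{lollodorme}. The only cosmetic difference is in the degree bookkeeping: the paper brings all effective rates to the common denominator $r_N(\mathcal F_{\hat V})\prod_{M'}r_N(\mathcal T[M',E_F[M']])$, which cancels in the ratio because both products have $K$ factors, whereas you cross-multiply and balance the degrees with your difference-one observation.
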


\begin{proof}
Let $\underline {\bar m}=(\bar m_{\bar e})_{\bar e\in \bar E}\,$, $\underline {\bar n}=({\bar n}_{\bar e})_{\bar e\in \bar E}$ be two vectors of non-negative integers, both having sum of the entries equal to $K$. 
Using definition \eqref{ratesefficacissimi} of the effective rates, then writing the reduced rates $\hat r_N$ as \eqref{reduced1} and the invariant measures $\mu_M^N$ as \eqref{markovchaintree4M} we can simplify the common denominators and write the terms in the numerator of the form $\prod_{M\in\mathcal M}r_N\big(\mathcal T_x\left[M,E_F[M]\right]\!\big)\; r_N\big(\mathcal F_{\hat V\setminus\{y_1\}}(y_1\to y_2)\big)$ as sums of products of $|V|-|\mathcal M|+1$ edge rates obtaining:
\begin{equation}
	\frac{\prod_{\bar e\in \bar E}\, \bar r_N(\bar e)^{{\bar m}_{\bar e}}}{\prod_{{\bar e}\in \bar E}\, \bar r_N(\bar e)^{{\bar n}_{\bar e}}} \,=\,
\frac{\sum_{\underline m\in A}\,c(\underline m)\,\prod_{e\in E}\,r_N(e)^{m_e}}{\sum_{\underline n\in B}\,d(\underline n)\,\prod_{e\in E}\,r_N(e)^{n_e}}
\end{equation}
for suitable sets $A,B$ of vectors $\underline m=(m_e\geq0)_{e\in E}$, $\underline n=(n_e\geq0)_{e\in E}$ respectively such that $\sum_{e\in E}m_e=\sum_{e\in E}n_e=K\,\big(|V|-1\big)$ and coefficients $c(\underline m),\,d(\underline n)>0$.

Now the proof is analogous to the proof of Proposition \ref{lollodorme}.
\end{proof}

Assumption \ref{FRL2} is preserved for the effective graph, therefore the chain $\bar X^N_t$ is a multiscale Markov chain too. Then by the same argument of Proposition \ref{existence} the invariant measure $\bar\pi_N$ converges to a limiting measure, denoted by $\bar\pi$.
\smallskip

We are ready to state our main result

\begin{theorem}[Limiting invariant measure] \label{ilteo}
Let $X^N_t$ be a continuous time Markov chain with strongly connected transition graph $(V,E,r_N)$ satisfying Assumption \ref{FRL2}. 
Let $\pi_N$ be its unique invariant measure.
Then its limit exists, is a probability measure on $V$ and can be expressed as
\begin{equation}\label{laformula}
	\pi:=
	\lim_{N \rightarrow \infty} \pi_N =
	\sum_{M\in \mathcal M} \bar \pi(M)\, \mu_M \;.
\end{equation}
In particular $\pi(x)=0$ for $x\in V\setminus \hat V$ and more explicitly:
\begin{equation}\label{laformula2}
	\pi(x)= \begin{cases} 
	\,\bar \pi(M)\;\mu_M(x) & \textrm{, }\, x\in M  \textrm{ for }M\in\mathcal M\\
	\,0 & \textrm{, }\,x\in Q  \textrm{ for }Q\in\mathcal Q \setminus \mathcal M \end{cases} \ .
\end{equation}
\end{theorem}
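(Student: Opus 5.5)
The plan is to prove \eqref{laformula} at finite $N$ up to vanishing errors, in two stages: first pass from $\pi_N$ to the invariant measure $\hat\pi_N$ of the trace process, then from $\hat\pi_N$ to $\bar\pi_N$ and $\mu_M^N$.

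\emph{First stage.} A standard fact about trace processes \cite{L1} is that $\hat\pi_N(x)=\pi_N(x)/\pi_N(\hat V)$ for $x\in\hat V$. Combinatorially this follows from \eqref{reduced1} and the Markov chain tree theorem. The arborescences of $(\hat V,\hat E)$ rooted at $x$, each weighted by $\hat r_N$, multiply out against the denominator $r_N(\mathcal F_{\hat V})^{|\hat V|-1}$ to give $r_N(\mathcal T_x)$ up to a factor independent of $x$. So it suffices to show two things:
\begin{enumerate}
\item $\pi_N(V\setminus\hat V)\to0$;
\item $\hat\pi_N(x)\to\bar\pi(M)\mu_M(x)$ for $x\in M$.
\end{enumerate}
For the first, I would take any arborescence $\tau$ rooted at a transient point $z\in Q\notin\mathcal M$ and build a strictly faster arborescence rooted in $\hat V$. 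Follow a fast path in $(V,E_F)$ from $z$ into some minimal class $M$, ending at $w\in M$. Reverse the root: delete the out-edges of $\tau$ along this path and add the fast path edges, which makes $w$ the root. Each step replaces an edge of $\tau$ by a fast edge, and the first step adds a fast edge while removing nothing. Accounting for the removed edge at $w$ (every edge has velocity at most $E_F$), the resulting ratio of weights tends to $\infty$. Hence by Proposition \ref{existence} no arborescence rooted at $z$ is in $\mathcal T^F$, and $\pi(z)=0$.

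\emph{Second stage.} I will work entirely on the reduced graph $(\hat V,\hat E,\hat r_N)$, which satisfies Assumption \ref{FRL2} by Proposition \ref{lollodorme}. There, the edges inside each $M$ include the fast edges $E_F[M]$, and every edge between different classes is slower by Remark \ref{slower}. An edge $\hat r_N(x,y)$ with $x,y\in M$ may get extra contributions from excursions outside, but these are slower, so the internal rates of $M$ are asymptotic to $r_N$ on $E_F[M]$ plus lower-order terms.

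The key step is a factorization of the fast arborescences of $(\hat V,\hat E)$ rooted at $x\in M$. I claim they consist of:
\begin{itemize}
\item an arborescence of $(M,E_F[M])$ rooted at $x$;
\item for each $M'\neq M$, an arborescence of $(M',E_F[M'])$ rooted at some $y_{M'}$, whose root then sends a single slow edge $(y_{M'},y')$ into another class;
\item the pattern of these inter-class edges forming an arborescence of $(\mathcal M,\bar E)$ rooted at $M$.
\end{itemize}
To justify the claim, note that any arborescence has exactly $|\mathcal M|-1$ edges leaving their class. If one replaces the internal edges by fast ones, the weight can only grow, because every class is strongly connected through fast edges and the choice of root inside each class is free. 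Summing the weights over the roots $y_{M'}$ and the targets gives
\begin{equation*}
\prod_{M'}r_N\bigl(\mathcal T[M',E_F[M']]\bigr)\;\frac{r_N(\mathcal T_x[M,E_F[M]])}{r_N(\mathcal T[M,E_F[M]])}\prod_{(M',M'')\in\bar\tau}\bar r_N(M',M'').
\end{equation*}
The first product does not depend on $x$. Summing over $\bar\tau$ therefore yields $\hat\pi_N(x)\sim\mu_M^N(x)\,\bar r_N(\bar{\mathcal T}_M)/(\ldots)$, which after normalization gives $\mu_M(x)\bar\pi(M)$ by Proposition \ref{existence} applied to the effective graph (legitimate by Proposition \ref{iodormo}).

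\emph{Main obstacle.} The delicate point is making the dominance rigorous when restricting to the fast class. Arborescences containing more than $|\mathcal M|-1$ inter-class edges, or containing non-fast internal edges, must be negligible compared with the leading term. Since the fastest arborescence over $\mathcal M$ may use inter-class edges of very different scales, it is not enough to compare edges one by one. I will instead compare the full sums using Assumption \ref{FRL2}: I will show that the exact expansion of $\hat r_N(\hat{\mathcal T}_x)$ equals the displayed product times $(1+o(1))$ plus terms that are $o$ of the total. This can be done by an exchange argument. Any arborescence with $k>|\mathcal M|-1$ inter-class edges contains a class that has both an internal slow substitute and an outgoing edge. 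Swapping that outgoing edge for a fast internal edge produces an arborescence that is faster by a factor tending to $\infty$, up to a bounded combinatorial multiplicity.
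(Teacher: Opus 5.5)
\textbf{Gap in step (1).}
Your route is the paper's: the trace chain via Lemma~\ref{spanningreduced}, fast reduced edges $=E_F[\hat V]$ as in Lemma~\ref{fastat}, exchange arguments as in Lemma~\ref{Lemma2-red}, and the factorization whose sum over $\bar\tau$ is exactly \eqref{inpiu}. The second stage is sound. The gap is in step (1), the proof that $\pi_N(V\setminus\hat V)\to0$.

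Take the fast path $z=z_0,\dots,z_m=w$ and let $e_i$ be the $\tau$-out-edge of $z_i$. Your re-rooted arborescence satisfies
\[
\frac{r_N(\tau')}{r_N(\tau)}=\prod_{i=1}^{m}\frac{r_N(z_{i-1},z_i)}{r_N(e_i)} .
\]
The remark ``every edge has velocity at most $E_F$'' only bounds each factor away from zero. Divergence requires some $e_i$ to be slow, and nothing forces that: the slow edge by which $\tau$ leaves $M$ may start at a vertex of $M$ other than $w$. For instance, take $V=\{z,w,w'\}$ with fast edges $(z,w),(w,w'),(w',w)$ of rate $1$ and a slow edge $(w',z)$ of rate $1/N$. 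Then $M=\{w,w'\}$ and $z$ is transient. For $\tau=\{(w,w'),(w',z)\}\in\mathcal T_z$ your construction yields $\tau'=\{(z,w),(w',z)\}$ with $r_N(\tau')=r_N(\tau)=1/N$. So it does not show $\tau\notin\mathcal T^F$, although this is true, since $\{(z,w),(w',w)\}$ has weight $1$.

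\textbf{Repair.}
The missing ingredient is the closedness of $M$.
\begin{itemize}
\item Choose $w$ as the first vertex of the path in $\hat V$, so that $z_1,\dots,z_{m-1}\notin M$.
\item Since the root $z$ lies outside $M$, some $\tau$-out-edge from $M$ leaves $M$ and is therefore slow.
\item Replace \emph{all} $\tau$-out-edges of vertices of $M$ by a fast arborescence of $(M,E_F[M])$ rooted at $w$, and give $z_0,\dots,z_{m-1}$ the path edges as out-edges.
\end{itemize}
The result $\tau''$ is an arborescence rooted at $w$. It trades $|M|+m-1$ edges, at least one of them slow, for as many fast ones, so $r_N(\tau)/r_N(\tau'')\to0$. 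Equivalently, when $e_m$ is fast, apply the move of Lemma~\ref{Lemma2}(i) to $\tau'$. The paper's Lemma~\ref{Lemma1} argues differently: it adds a fast edge leaving $Q$ and deletes a slow edge from the resulting cycle.

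\textbf{Smaller points.}
\begin{itemize}
\item An arborescence of $(\hat V,\hat E)$ has \emph{at least}, not exactly, $|\mathcal M|-1$ inter-class edges; you use the correct version later.
\item The exchanges in your last paragraph should be done blockwise as in Lemma~\ref{Lemma2}, since a single-edge swap can close a cycle.
\item $\hat\pi_N\propto\pi_N$ does not follow by merely multiplying out \eqref{reduced1}. Its combinatorial content is Theorem~\ref{teo:treesrelation}, $r_N(\mathcal T_x)=r_N(\mathcal F_{\hat V})\,\hat r_N(\hat{\mathcal T}_x)$, which needs Lemma~\ref{quandoquando} and an induction. Quoting the fact from \cite{L1} is legitimate, though.
\end{itemize}
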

\smallskip

\begin{remark} 
\label{classiveloci}
	For $M\in\mathcal M$, the weighted graph $\left(M, E_F\left[M\right], r_N\right)$ has all comparable weights.
Fix an arbitrary edge $(x^*,y^*)\in E_F\left[M\right]$ and rescale all the weights by a global factor: the rescaled weights $\frac{r_N(x,y)}{r_N(x^*,y^*)}$, $(x,y)\in E_F\left[M\right]$, produce the same invariant measure $\mu^N_M$. 
Since all the rates are comparable, the limits $\lim_{N\to + \infty}\frac{r_N(x,y)}{r_N(x^*,y^*)}=:r^*(x,y)>0$ exist and are finite and positive.
In this simple situation the limiting measure $\mu_M=\lim_{N\to\infty}\mu^N_M$ is the invariant measure of the chain with rates $r^*$.
\end{remark}
\smallskip

\section{Recursion for the limiting invariant measure and forest representation of the multiscale dynamics} \label{sec: iteration}

Remark \ref{classiveloci} shows how to compute the limiting measures $\mu_M$ with $M\in\mathcal M$. The measure $\bar\pi$ instead is again the limiting invariant measure of a multiscale Markov chain. Is it possible to close formula \eqref{laformula} and use it to compute the limiting measure $\pi\,$?

For this purpose we now illustrate the recursive structure hidden in formula \eqref{laformula}, which is geometrically encoded by a forest. This is the same introduced in \cite{BL, LX} to understand and describe the dynamic metastable behavior. Since the construction is based on an iteration of the procedure described in Section \ref{sec: main}, in the present section we adapt the notation to clarify the iterative step.

\subsection{Iterative notation}

We label by $(0)$ the initial strongly connected graph $(V,E,r_N)$ 
and by $(1)$ the strongly connected graph $\left(\mathcal M, \bar E, \bar r_N\right)$ obtained through the effective construction described in Section \ref{sec: main}. 
We adopt the following notation:
\begin{equation*}
\begin{array}{l}
\left(\mathcal M^{(0)}, \bar{E}^{(0)}, \bar r_N^{(0)}\right) :=\left(V, E, r_N\right)\,,\\[4pt]
\left(\mathcal M^{(1)}, \bar{E}^{(1)}, \bar r_N^{(1)}\right) :=\left(\mathcal M, \bar E, \bar r_N\right)\,.
\end{array}
\end{equation*}
We are going to define recursively for every index $i\in\mathbb N$ a strongly connected graph $\left(\mathcal M^{(i)}, \bar{E}^{(i)},\bar r_N^{(i)}\right)$ that is obtained from $\left(\mathcal M^{(i-1)}, \bar{E}^{(i-1)}, \bar r_N^{(i-1)}\right)$ by the same procedure giving $\left(\mathcal M^{(1)}, \bar{E}^{(1)},\bar r_N^{(1)}\right)$ from $\left(\mathcal M^{(0)}, \bar{E}^{(0)},\bar r_N^{(0)}\right)$. 
At each step $i\geq1$, we consider $\bar E^{(i-1)}_F$ the set of fastest edges in $\bar E^{(i-1)}$ according to the rates $\bar{r}_N^{(i-1)}$, we denote by $\mathcal Q^{(i)}$ the collection of equivalence classes of the set $\mathcal M^{(i-1)}$ with respect to communication on $\left(\mathcal M^{(i-1)}, \bar{E}^{(i-1)}_F\right)$, and by $\mathcal M^{(i)}$ the collection of the minimal equivalence classes. Finally we proceed as in Section \ref{sec: main} to define the new \textit{effective rates} $\bar r_N^{(i)}$.
\smallskip

This means that the Markov dynamics at iteration $(i)$ is obtained as an \textit{effective dynamics} on the minimal classes associated with fast communication at the previous iteration $(i-1)$. Note that applying iteratively Remark \ref{slower2}, the effective rates $\bar r_N^{(i)}$  have velocity slower or equal to $E_{F-i}$. 
\smallskip

The elements $M^{(0)}\in\mathcal M^{(0)}$ are simply the vertices of $V$, while elements $M^{(1)}\in\mathcal M^{(1)}$ are pairwise disjoint subsets of $V$.
%
This construction is iterated level by level: elements $M^{(i)}\in \mathcal M^{(i)}$ are pairwise disjoint subsets of $\mathcal M^{(i-1)}\,$ and an element $M^{(i-1)}\in\mathcal M^{(i-1)}$ may belong either to some minimal class $M^{(i)}\in\mathcal M^{(i)}$ or to some non-minimal class $Q^{(i)}\in \mathcal Q^{(i)}\setminus \mathcal M^{(i)}$.
Moreover, we can associate to the collection of elements $M^{(i)}\in \mathcal M^{(i)}$ a collection of pairwise disjoint subsets of $V$ defined recursively as
\begin{equation}
\textrm{Su}\big(M^{(i)}\big):=\cup_{M^{(i-1)}\in M^{(i)}}\, \textrm{Su}\big(M^{(i-1)}\big) \,\subseteq V \;,
\end{equation}
$\textrm{Su}\left(M^{(0)}\right) := \{x\}$ for $M^{(0)}=x\in V$.
Of course if $M^{(i-1)}\in M^{(i)}$ then $\textrm{Su}\left(M^{(i-1)}\right)\subseteq \textrm{Su}\left(M^{(i)}\right)$.
We also set
$\textrm{Su}\left(\mathcal M^{(i)}\right):=\bigcup_{M^{(i)}\in\mathcal M^{(i)}} \textrm{Su}\left( M^{(i)}\right)\,$.
\smallskip

An important fact is that applying iteratively Proposition \ref{iodormo}, if the rates $r_N=\bar r^{(0)}_N$ satisfy Assumption \ref{FRL2} then the rates $\bar{r}_N^{(i)}$ satisfy Assumption \ref{FRL2} at every step $i\geq0$.
\smallskip

We denote by  $\bar{\pi}_N^{(i)}$ the unique invariant measure of the Markov chain with transition graph $\left(\mathcal M^{(i)},\bar E^{(i)},\bar{r}_N^{(i)}\right)$. $\bar{\pi}_N^{(i)}$ is a probability measure on $\mathcal M^{(i)}$. 
We call $\bar{\pi}^{(i)}$ the corresponding limiting measure, which exists since Assumption \ref{FRL2} is satisfied at each level.
\smallskip

Given $M^{(i)}\in\mathcal M^{(i)}$ for $i\geq1$, we denote by $\mu_{M^{(i)}}^{(i-1),N}$ the invariant measure of the Markov chain with weighted transition graph $\left(M^{(i)}, \bar{E}_F^{(i-1)}\left[M^{(i)}\right], \bar{r}_N^{(i-1)}\right)$.
We remark once more that $\mu_{M^{(i)}}^{(i-1),N}$ can be thought either as a positive probability measure on $M^{(i)}$ or as a probability measure on $\mathcal M^{(i-1)}$ supported only on the class $M^{(i)}\in \mathcal M^{(i)}$.
We call $\mu_{M^{(i)}}^{(i-1)}$ the corresponding limiting measure (see Remark \ref{classiveloci}).

\subsection{Recursive formula}

Using the recursive notation we have introduced, formula \eqref{laformula} reads
\begin{equation}\label{laformulait}
	\bar{\pi}^{(0)} \,= \sum_{M^{(1)}\in \mathcal M^{(1)}} \bar \pi^{(1)}\big(M^{(1)}\big)\; \mu_{M^{(1)}}^{(0)} \,.
\end{equation}

Theorem \ref{ilteo} can be applied recursively at any step $i\geq1$, since the relation between the chains at iterations $(i-1)$ and $(i)$ is always the same and Assumption \ref{FRL2} is always satisfied.
This gives the following recursive formula:
\begin{equation}\label{laformulaiti}
	\bar{\pi}^{(i-1)}  \,= \sum_{M^{(i)}\in \mathcal M^{(i)}} \bar \pi^{(i)}\big(M^{(i)}\big)\; \mu_{M^{(i)}}^{(i-1)}
\end{equation}
for all $i\geq1$.
More explicitly:
\begin{equation}\label{laformulaiti2}
\begin{split}
	&\bar\pi^{(i-1)}\big(M^{(i-1)}\big) = \\[2pt] &= \begin{cases} 
	\;\bar \pi^{(i)}\left(M^{(i)}\right)\;\mu_{M^{(i)}}^{(i-1)}\left(M^{(i-1)}\right) & \textrm{, } M^{(i-1)}\in M^{(i)}  \textrm{ for }M^{(i)}\in\mathcal M^{(i)}\\[4pt]
	\;0 & \textrm{, } M^{(i-1)}\in Q^{(i)} \textrm{ for }Q^{(i)}\in\mathcal Q^{(i)} \setminus \mathcal M^{(i)} \end{cases} .
\end{split}
\end{equation}

\smallskip

The number of iterations is finite and smaller or equal to the number $F$ of different edge velocity classes; let us denote it by $k$. The iterative procedure stops at the first iteration $(k)$ for which $\left|\mathcal M^{(k)}\right|=1$, so that there is a single element $M^{(k)}\in \mathcal M^{(k)}$ and $\bar{\pi}^{(k)}\left(M^{(k)}\right)=1$.
Thus $k=\inf\left\{i\in\mathbb N\,:\, |\mathcal M^{(i)}|=1\right\}$.
The recursive procedure can be formally extended to further steps, but then we have $\left|\mathcal M^{(j)}\right|=1$ and $\bar{\pi}^{(j)}\left(M^{(j)}\right)=1$ for all $j\geq k$. 
\smallskip

Observe that iterating formula \eqref{laformulaiti2} we obtain $\bar \pi^{(0)}\left(M^{(0)}\right)=0$, unless $M^{(0)}\in M^{(1)}\in M^{(2)}\in\dots\in M^{(k)}$ for suitable $M^{(i)}\in\mathcal M^{(i)}$, $i=1\dots k-1$, and for the unique element $M^{(k)}\in\mathcal M^{(k)}$. In other terms:
\begin{equation} \label{zeromeasure}
 \bar \pi^{(0)}\big(M^{(0)}\big)=0 \quad \textrm{if }\,M^{(0)}\notin \mathrm{Su}\big(M^{(k)}\big)\,.
\end{equation}
On the other hand, $\bar{\pi}^{(k)}\left(M^{(k)}\right)=1$. 
It then follows that the recursion \eqref{laformulaiti} has a unique solution, which provides a formula for the limiting invariant measure $\pi=\bar{\pi}^{(0)}$ of Theorem \ref{ilteo}.

\subsection{Forest representation}
Using recursion \eqref{laformulaiti}, we obtain a representation formula for the limiting measure $\pi=\bar{\pi}^{(0)}$, in terms of weights of paths on a forest. 
The nodes of this forest are the elements of $\bigcup_{i=0}^k \mathcal M^{(i)}$. They are naturally organized into levels: the nodes at level $i$ are the elements of $\mathcal M^{(i)}$. 

Edges of the forest only connect nodes belonging to consecutive levels: precisely, we draw an edge from a node $M^{(i)}\in \mathcal M^{(i)}$ to a node $M^{(i-1)}\in \mathcal M^{(i-1)}$ if $M^{(i-1)}\in M^{(i)}$, or equivalently, if $\textrm{Su}\left(M^{(i-1)}\right)\subseteq \textrm{Su}\left(M^{(i)}\right)$. 
The edge $\left(M^{(i)}, M^{(i-1)}\right)$ is given weight $\bar{\mu}_{M^{(i)}}^{(i-1)}\left(M^{(i-1)}\right)$.
\smallskip

In this way, at most one edge can enter each node $M^{(i-1)}$: if $M^{(i-1)}\in M^{(i)}$ for some $M^{(i)}\in \mathcal M^{(i)}$, there is one edge entering node $M^{(i-1)}$ from node $M^{(i)}$; if instead $M^{(i-1)}\in Q^{(i)}$ for some $Q^{(i)}\in \mathcal Q^{(i)}\setminus \mathcal M^{(i)}$, there is no edge entering $M^{(i-1)}$. There are no edges entering the unique element $M^{(k)}\in\mathcal M^{(k)}$, since there are no nodes at higher levels.
This construction produces a weighted directed forest. For each component of the forest, the unique node at the highest level is called the \emph{root}. In particular, there is one single component whose root is $M^{(k)}$ at highest level $k$.
For each node except for the leaves, the sum of the weights going out from the node is equal to one, since $\sum_{M^{(i-1)}\in  M^{(i)}}\bar{\mu}_{M^{(i)}}^{(i-1)}\left(M^{(i-1)}\right)=1$.
\smallskip

Given $M^{(0)}\in \mathcal M^{(0)}$, there exists a unique path going from the root of its component to $M^{(0)}$.
If this root is not $M^{(k)}$, then $\bar{\pi}^{(0)}\left(M^{(0)}\right)=0$ by \eqref{zeromeasure}.
If the root is $M^{(k)}$, which occurs when $M^{(0)}\in \textrm{Su}\left(M^{(k)}\right)$, then we call $\gamma_{M^{(0)}}$ the unique path from the root $M^{(k)}$ to $M^{(0)}$.
We then have the following formula
\begin{theorem}\label{ilteo2}
	The limiting measure \eqref{laformula} in Theorem \ref{ilteo} is the unique solution $\pi=\bar\pi^{(0)}$ of the recursion \eqref{laformulaiti}. It can be computed as:
\begin{equation}\label{tantiprod}
\begin{split}
	&\bar{\pi}^{(0)}\big(M^{(0)}\big) =\\ &=\begin{cases}
		\;\prod_{\left(M^{(i)},M^{(i-1)}\right)\,\in\,\gamma_{M^{(0)}}} \,\mu^{(i-1)}_{M^{(i)}}\left(M^{(i-1)}\right) & \textrm{if }\, M^{(0)}\in \textup{Su}\left(M^{(k)}\right)\\[4pt]
		\;0 & \textrm{if }\, M^{(0)}\notin \textup{Su}\left(M^{(k)}\right)
	\end{cases}
\end{split}
\end{equation}
for all $M^{(0)}=x\in V=\mathcal M^{(0)}\,$.
\end{theorem}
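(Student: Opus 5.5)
Theorem \ref{ilteo2} follows from Theorem \ref{ilteo} applied level by level, plus a finite induction; no new combinatorial input is needed. First, Theorem \ref{ilteo} applies at every level. By repeated use of Proposition \ref{iodormo}, each effective graph $\big(\mathcal M^{(i)},\bar E^{(i)},\bar r^{(i)}_N\big)$ is strongly connected and satisfies Assumption \ref{FRL2}. Applying Theorem \ref{ilteo} to it gives the existence of $\bar\pi^{(i)}$ and the relation \eqref{laformulaiti}, in the explicit form \eqref{laformulaiti2}, for every $i\ge1$.

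Second, the recursion terminates after finitely many steps. At each step the effective rates become strictly slower: by Remark \ref{slower2}, the velocity of $\bar r^{(i)}_N$ is at most that of $E_{F-i}$. Hence the number of velocity classes cannot stay positive indefinitely. If $|\mathcal M^{(i)}|\ge2$, then $\bar E^{(i)}$ is nonempty and its edges carry a velocity of order at most $F-i$. Therefore some $k\le F$ has $|\mathcal M^{(k)}|=1$. To make this rigorous I would check that whenever $|\mathcal M^{(i-1)}|\ge 2$, the fast subgraph $\bar E^{(i-1)}_F$ is nonempty, so that the minimal classes are genuine subsets. I would also check that strong connectivity forces the chain of strict slow-downs to end. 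For the single element $M^{(k)}$ we have $\bar\pi^{(k)}(M^{(k)})=1$, since a chain on one state has the trivial invariant measure.

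Third, I prove \eqref{tantiprod} by downward induction on the level $j=k,k-1,\dots,0$. The claim at level $j$ is that for every $M^{(j)}\in\mathcal M^{(j)}$, $\bar\pi^{(j)}(M^{(j)})$ equals the product of the weights along the forest path from $M^{(k)}$ to $M^{(j)}$ when $M^{(j)}$ lies in the component of $M^{(k)}$, and $0$ otherwise. The base case $j=k$ is the step above. For the inductive step, take $M^{(j-1)}$ and use \eqref{laformulaiti2}. If $M^{(j-1)}\in M^{(j)}$, the forest has the edge $\big(M^{(j)},M^{(j-1)}\big)$, and multiplying by its weight $\mu^{(j-1)}_{M^{(j)}}(M^{(j-1)})$ extends the path product (or keeps the value $0$). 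If instead $M^{(j-1)}$ lies in a non-minimal class, it has no incoming edge, so it is a root of a component other than that of $M^{(k)}$, and \eqref{laformulaiti2} gives $0$, as claimed.

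Finally, uniqueness of the solution of the recursion holds because the recursion, read downward from the fixed top value $\bar\pi^{(k)}=1$, determines each level from the previous one. At level $0$ this gives $\pi=\bar\pi^{(0)}$ by \eqref{laformulait}. The main obstacle is the termination step: it needs a careful argument that each iteration genuinely reduces the number of states or velocity classes, and I would handle it by combining Remark \ref{slower2} with the finiteness of $F$.
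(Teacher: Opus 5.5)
Your overall route is the paper's. Its proof just iterates \eqref{laformulaiti2} $k$ times downward from $\bar\pi^{(k)}(M^{(k)})=1$ and assigns zero to the roots shallower than $k$ via \eqref{zeromeasure}, which is your downward induction, and your uniqueness remark is fine.

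The step that fails as you plan it is termination. Remark \ref{slower2} cannot be iterated into ``$\bar r^{(i)}_N$ is at most as fast as $E_{F-i}$'', because effective rates can have velocities outside the $F$ original classes. A slow jump into a transient state whose slow exit competes with a fast one yields a product of two slow rates. For example, on $V=\{A,B,C,z_1,z_2\}$ let $r_N(z_1,B)=r_N(z_2,C)=1$ and $r_N(A,B)=r_N(B,A)=r_N(A,z_1)=r_N(z_1,C)=r_N(C,z_2)=r_N(z_2,A)=\epsilon=1/N$, so $F=2$. Then $\mathcal M^{(1)}$ consists of the singletons $\{A\},\{B\},\{C\}$ (the $z_i$ are transient), and the level-$1$ rates are the reduced ones: $\hat r_N(A,B)\approx 2\epsilon$, $\hat r_N(B,A)=\epsilon$ and $\hat r_N(A,C)\approx\hat r_N(C,A)\approx\epsilon^2$. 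Hence the level-$2$ minimal classes are $\{A,B\}$ and $\{C\}$, and only $\mathcal M^{(3)}$ is a singleton, so $k=3>F$. Thus neither a decreasing count of velocity classes nor ``strong connectivity ends the slow-downs'' is a valid mechanism. The bound $k\le F$ is false in general, even though the text before the theorem also asserts it. Fortunately only finiteness of $k$ is ever used.

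The repair is the other ingredient you mention, carried to its conclusion. If $|\mathcal M^{(i-1)}|\ge 2$, strong connectivity gives $\bar E^{(i-1)}\neq\emptyset$, hence $\bar E^{(i-1)}_F\neq\emptyset$. A fastest edge $(a,b)$ either lies inside one class, which then has at least two elements, or leaves the class of $a$, which is then not minimal. Either way $|\mathcal M^{(i)}|\le|\mathcal M^{(i-1)}|-1$, so $k\le|V|-1$, and the rest of your argument goes through unchanged.
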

The proof of the above Theorem consists just in iterating the recursion \eqref{laformulaiti2} $k$ times starting from $\bar\pi^{(k)}(M^{(k)})=1$, and assigning weight zero to all the roots less deep than $k$ by \eqref{zeromeasure}.
The construction and the formula \eqref{tantiprod} are illustrated in Figure \ref{fiore} .

\begin{figure}[h]
	\centering
	\begin{tikzpicture}[
		scale=0.74, transform shape,
		level distance=2.2cm,
		dot/.style={circle, fill=black, minimum size=6pt, inner sep=0pt},
		edge from parent/.style={draw, thick},
		edge label/.style={midway, fill=white, inner sep=1.5pt, font=\small}
		]
		
		\begin{scope}[xshift=-10cm]
			\node at (0,0) {$\mathcal{M}^{(3)}$};
			\node at (0,2.2) {$\mathcal{M}^{(2)}$};
			\node at (0,4.4) {$\mathcal{M}^{(1)}$};
			\node at (0,6.6) {$V=\mathcal{M}^{(0)}$};
		\end{scope}
		
		\node[dot] (root) at (-3,0) {}
		child[grow=up, xshift=-4cm] {
			node[dot] {} 
			child {
				node[dot] {} 
				child[sibling distance=2cm] { node[dot, label=left:$\frac{1}{6}$] {} edge from parent node[edge label] {$\frac{1}{4}$} }
				child[sibling distance=2cm] { node[dot, label=left:$\frac{1}{2}$] {} edge from parent node[edge label] {$\frac{3}{4}$} }
				edge from parent node[edge label] {1}
			}
			edge from parent node[edge label, left] {$\frac{2}{3}$}
		}
		child[grow=up, xshift=3cm] {
			node[dot] {}
			child[xshift=-2.8cm, sibling distance=2cm] {
				node[dot] {}
				child { node[dot, label=left:$\frac{1}{48}$] {} edge from parent node[edge label, right] {$\frac{1}{8}$} }
				child { node[dot, label=left:$\frac{1}{16}$] {} edge from parent node[edge label] {$\frac{3}{8}$} }
				child { node[dot, label=left:$\frac{1}{12}$] {} edge from parent node[edge label, left] {$\frac{1}{2}$} }
				edge from parent node[edge label, left] {$\frac{1}{2}$}
			}
			child[xshift=2.8cm] {
				node[dot] {}
				child { node[dot, label=left:$\frac{1}{6}$] {} edge from parent node[edge label] {1} } 
				edge from parent node[edge label, right] {$\frac{1}{2}$}
			}
			edge from parent node[edge label, right] {$\frac{1}{3}$}
		};
		\node[dot, label=left:$0$] at (5,6.6) {};
\end{tikzpicture}
\caption{Forest representation of a multiscale dynamics with $k=3$ levels. The weight of each edge $(M^{(i)},M^{(i-1)})$ between consecutive layers is $\bar\mu^{(i-1)}_{M^{(i)}}\left(M^{(i-1)}\right)$. The limiting invariant measure $\pi=\bar\pi^{(0)}$ of the whole dynamics is computed according to formula \eqref{tantiprod} and written next to the leaves.}
\label{fiore}
\end{figure}
\smallskip

\section{Generalized star-delta reduction on directed graphs}\label{sec:stardelta}

In this section we discuss the properties of arborescences in a network reduction that is the directed analogous of the star-delta reduction on electrical networks \cite{BF,GH}.
The results here are independent of the multiscale structure of the weights, so we omit the dependence of $r$ on the parameter $N$. 

\subsection{1-vertex reduction} We start discussing the reduction removing a single vertex from a complete weighted graph.
\smallskip

Let $K_{n+1}=(V,E,r)$ be a complete weighted digraph on $n+1$ vertices $V=\hat V\cup\{s\}\,$, $|\hat V|=n$. The special vertex $s$ is the one we are going to remove. A non-negative rate $r(e) \geq 0$ is associated to every edge $e\in E=\{(x,y)\in V\times V\,:\, x\neq y\}$. 
We split the edge set $E$ as the disjoint union of two subsets $E= E^\textup{B}\cup E^\textup{I}$ (\textit{boundary edges} and \textit{internal edges}):

\begin{itemize}
	\item $E^\textup{B}$ consists of edges of type $(x,x')$ with $x,x' \in \hat V \,$, $x\neq x'\,$;
	\item $E^\textup{I}$ consists of edges of type $(x,s)$ or $(s,x)$ with $x\in \hat V \,$.
\end{itemize}

\begin{figure}[h]
	\centering
	\begin{tikzpicture}[scale=0.95,
		vertex/.style={circle, fill=black, inner sep=1.5pt},
		every edge/.style={draw, thick, ->}
		]
		\node[vertex, label=above:$s$] (s) at (0,0) {};
		
		\def\n{6}    
		\def\r{1.8}  
		
		\foreach \i in {1,...,\n}{
			\node[vertex, label={\i*60-90}:{\small $x_{\i}$}] (x\i) at ({360/\n * (\i-1)}:\r) {};
			
			\draw[->, thick, shorten >=3pt, shorten <=3pt] (s) -- (x\i);
			\draw[->, thick, shorten >=3pt, shorten <=3pt] (x\i) -- (s);
			
			\draw[dashed, gray, ->] (x\i) -- ({360/\n * (\i-1)}:{\r+0.6});       
			\draw[dashed, gray, ->] (x\i) -- ({360/\n * (\i-1) + 6.5}:{\r+0.6});  
			\draw[dashed, gray, ->] (x\i) -- ({360/\n * (\i-1) - 6.5}:{\r+0.6});  
		}
	\end{tikzpicture}
	\hspace{25pt}
	\begin{tikzpicture}[scale=0.95,
		vertex/.style={circle, fill=black, inner sep=1.5pt},
		every edge/.style={draw, thick, ->}
		]
		\def\n{6}    
		\def\r{1.8}  
		
		\foreach \i in {1,...,\n}{
			\node[vertex, label={\i*60-90}:{\small $x_{\i}$}] (x\i) at ({360/\n * (\i-1)}:\r) {};
		}
		
		\foreach \i in {1,...,\n}{
			\foreach \j in {1,...,\n}{
				\ifnum\i<\j
				\draw[->, thick, shorten >=3pt, shorten <=3pt] (x\i) -- (x\j);
				\draw[->, thick, shorten >=3pt, shorten <=3pt] (x\j) -- (x\i);
				\fi
			}
		}
		
		\foreach \i in {1,...,\n}{
			\draw[dashed, gray, ->] (x\i) -- ({360/\n * (\i-1)}:{\r+0.7});       
			\draw[dashed, gray, ->] (x\i) -- ({360/\n * (\i-1) + 7}:{\r+0.7});  
			\draw[dashed, gray, ->] (x\i) -- ({360/\n * (\i-1) - 7}:{\r+0.7});  
		}
		
\end{tikzpicture}
\caption{On the left: the internal edges $E^\textup{I}$ of the star graph $K_{n+1}$. On the right: the edges $E^*$ of the reduced multigraph $\hat{\mathbb K}_n$. Dashed arrows going out from the vertices suggest the presence of boundary edges $E^\textup{B}$. In this picture we have $n=6$.}
\label{fig:grafo-stella}
\end{figure}

Now, we define the \textit{generalized star-delta reduction of} $K_{n+1}$ as the weighted complete digraph $\hat K_n=(\hat{V},\hat{E},\hat r)$ with vertex set $\hat{V}=V \setminus \{s\}$, edge set $\hat{E}:=E^\textup{B}$, and reduced weights as in \eqref{single_reduced}:
\begin{equation}\label{spli}
	\hat{r}(x,x') := 
r(x,x') + r^*(x,x') \,,\quad x,x'\in\hat V,\,x\neq x' 
\end{equation}
where
\begin{equation} \label{r*}
r^*(x,x') := \begin{cases}
\;\dfrac{r(x,s)\,r(s,x')}{r^\textup{out}(s)} &,\,\textrm{if }r^\textup{out}(s)\neq 0\\
\;0 &,\,\textrm{if }r^\textup{out}(s)=0
\end{cases}
\end{equation}
and	$r^\textup{out}(s):=\sum_{x\in\hat V} r(s,x)\,$ is the total weight going out from $s$.

\begin{remark}\label{remo}
It will be sometimes natural to replace $\hat K_n$ by a weighted multigraph $\hat{\mathbb K}_n$ containing two different copies of each edge. Every pair of vertices $x, x' \in \hat V$, $x\neq x'$, is connected by an edge in $E^\textup{B}$ with weight $r(x,x')$ and by another edge in $E^*$ (a distinct copy of $E^\textup{B}$) with weight $r^*(x,x')$ defined by \eqref{r*}. We denote by $\hat{\mathbb R}$ the weights on the multiedge set $\hat{\mathbb E}:=E^\textup{B}\cup E^*$ defined in this way. Thus $\hat{\mathbb K}_n=(\hat V, \hat{\mathbb E}, \hat{\mathbb R})$.
\end{remark}

Let us fix a vertex $x_1\in\hat V$. We denote respectively by $\mathcal T_{x_1}$, $\hat{\mathcal T}_{x_1}$, $\hat \T_{x_1}$ the sets of arborescences rooted at $x_1$ of the graphs $K_{n+1}$, $\hat K_n$ and the multigraph $\hat{\mathbb K}_n$. 
\smallskip

It is useful to classify every spanning arborescence $\tau\in \mathcal T_{x_1}$ according to the structure of 
\begin{equation}
\hat f:=\tau \cap E^\textup{B} = \tau \smallsetminus E^\textup{I} \,.
\end{equation}
$\hat f$ contains only edges of $E^\textup{B}$ and it is a directed spanning forest of the reduced graph $\hat K_n$. In general $\hat f$ has $h$ components, one always directed torwards $x_1$.
We call them $(C_l)_{l=1}^h$, i.e. $\hat f=\cup_{l=1}^hC_l$, and we call $C_1$ the component rooted at $x_1$.
\smallskip

Observe that $\tau$ always contains one edge of $E^\textup{I}$ going from $s$ to $C_1$. The number $h$ of components of $\hat f$ ranges from $1$ to $n$, precisely: $h=1$ when $\tau$ contains only one edge of $E^\textup{I}$ (which goes out from $s$), $h=n$ when $\tau$ contains only edges of $E^\textup{I}$ (one going from $s$ to $x_1$ and $n-1$ going from $x_2,\dots,x_n$ to $s$).  In general, $\hat f$ has $h$ components if the arborescence $\tau$ on the original graph contains one edge of $E^\textup{I}$ exiting from $s$, $h-1$ entering in $s$, and $n-h$ edges of $E^\textup{B}$. So $h = |\tau\cap E^\textup{I}|$. 
\smallskip

Similarly, every spanning arborescence $t \in\hat\T_{x_1}$ can be classified according to the structure of 
\begin{equation}
\hat f:=t\cap E^\textup{B} =  t \smallsetminus E^*\,. 
\end{equation}
Also in this case $\hat f$ is a directed spanning forest of $\hat K_n$. It has $h$ components $(C_l)_{l=1}^h$ and without loss of generality $C_1$ is rooted at $x_1$.
The number $h$ of components ranges from $1$ to $n$. We have $h=1$ when $t$ contains no edge of $E^*$, while $h=n$ when $t$ contains only edges of $E^*$. In general, $\hat f$ has $h$ components if the arborescence $t$ on the multigraph contains $h-1$ edges of $E^*$ and $n-h$ edges of $E^\textup{B}$.
\smallskip


For $h=1,\dots,n$ and $x_2,\dots,x_h\in\hat V\setminus\{x_1\}$ all distinct, we denote by $\hat {\mathcal F}_{\{x_1,x_2\dots,x_h\}}$ the set of spanning forests of $\hat K_n$ with $h$ components rooted at $x_1, x_2,\dots, x_h$.
Given $\hat f\in\hat {\mathcal F}_{\{x_1,x_2,\dots,x_h\}}\,$, we denote  by $\mathcal T_{x_1}(\hat f)$, $\hat \T_{x_1}(\hat f)$ respectively the subsets of arborescences $\tau\in\mathcal T_{x_1}\,$, $\tau\in\hat\T_{x_1}$ such that $\tau\cap E^\textup{B}=\hat f$.

\smallskip

The following result is a generalization of \cite{CPV} for directed graphs. In the un-directed case the relation holds true for un-directed spanning trees without fixing a root; in the directed case it is necessary to fix a root.

\begin{proposition} \label{Lemmarnout}
The weight of the set $\mathcal T_{x_1}$ of arborescences of $K_{n+1}$ and the reduced weight of the set $\hat{\mathcal T}_{x_1}$ of arborescences of $\hat K_{n}$ are related by:
\begin{equation} \label{eq:lemmarnout}
	r\left(\mathcal{T}_{x_1}\right) \,=\, r^\textup{out}(s)\,\ \hat{r}\left(\hat{\mathcal{T}}_{x_1}\right) \,.
\end{equation}
\end{proposition}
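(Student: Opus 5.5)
We prove \eqref{eq:lemmarnout} by grouping arborescences on both sides according to their boundary part $\hat f$, using the classifications above. First expand $\hat r(\hat{\mathcal T}_{x_1})$ over the multigraph $\hat{\mathbb K}_n$ of Remark \ref{remo}. Since $\hat r(x,x')=r(x,x')+r^*(x,x')$, expanding the product over the edges of each $\hat t\in\hat{\mathcal T}_{x_1}$ gives
\[
\hat r\big(\hat{\mathcal T}_{x_1}\big)=\hat{\mathbb R}\big(\hat\T_{x_1}\big)=\sum_{h=1}^n\ \sum_{\{x_2,\dots,x_h\}}\ \sum_{\hat f\in\hat{\mathcal F}_{\{x_1,\dots,x_h\}}}\hat{\mathbb R}\big(\hat\T_{x_1}(\hat f)\big).
\]
The same decomposition holds for $r(\mathcal T_{x_1})$, with the sets $\mathcal T_{x_1}(\hat f)$. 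It therefore suffices to prove, for each fixed $\hat f$ with components $C_1,\dots,C_h$ ($C_l$ rooted at $x_l$),
\[
r\big(\mathcal T_{x_1}(\hat f)\big)=r^\textup{out}(s)\ \hat{\mathbb R}\big(\hat\T_{x_1}(\hat f)\big).
\]
The case $r^\textup{out}(s)=0$ is trivial: every $\tau\in\mathcal T_{x_1}$ uses an edge out of $s$, so the left side vanishes. Hence assume $r^\textup{out}(s)>0$.

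\emph{Left side.} An element $\tau\in\mathcal T_{x_1}(\hat f)$ is $\hat f$ plus one edge $(s,y)$ with $y\in C_1$ and one edge $(x_l,s)$ for each $l=2,\dots,h$. Any such choice is acyclic and reaches $x_1$, because $s$ points into $C_1$ and every root $x_l$, $l\ge2$, points to $s$. Thus
\[
r\big(\mathcal T_{x_1}(\hat f)\big)=r(\hat f)\, r(s,C_1)\prod_{l=2}^h r(x_l,s),
\qquad r(s,C):=\sum_{y\in C}r(s,y).
\]

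\emph{Right side.} An element $t\in\hat\T_{x_1}(\hat f)$ is $\hat f$ plus one $E^*$-edge $(x_l,y_l)$ out of each root $x_l$, $l\ge2$, with $y_l\notin C_l$. The result is an arborescence iff the induced map $l\mapsto c(l)$, where $c(l)$ is the component containing $y_l$, defines an arborescence on the component set $\{1,\dots,h\}$ rooted at $1$. The weight of $t$ is
\[
r(\hat f)\prod_{l\ge2}\frac{r(x_l,s)\,r(s,y_l)}{r^\textup{out}(s)}.
\]
Summing over the $y_l$ with the component map fixed gives
\[
\hat{\mathbb R}\big(\hat\T_{x_1}(\hat f)\big)=r(\hat f)\prod_{l\ge2}\frac{r(x_l,s)}{r^\textup{out}(s)}\ \sum_{\sigma}\ \prod_{l\ge2} w_{\sigma(l)},
\]
where $w_m:=r(s,C_m)$ and $\sigma$ ranges over arborescences on $\{1,\dots,h\}$ rooted at $1$ with edges $l\to\sigma(l)$.

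\emph{Key combinatorial step.} This is the main obstacle: the sum over rooted arborescences of the complete graph on $h$ nodes, where each edge $l\to m$ is weighted $w_m$ depending only on its target, must be evaluated. This is the weighted Cayley formula
\[
\sum_\sigma\prod_{l\ge2}w_{\sigma(l)}=w_1\,(w_1+\dots+w_h)^{h-2}.
\]
It can be proved by the Markov chain tree theorem applied to the chain with rates $q(l,m)=w_m$, whose invariant measure is proportional to $w$, or by a Prüfer-code argument. Since the components partition $\hat V$, we have $w_1+\dots+w_h=r^\textup{out}(s)$. Hence the sum equals $r(s,C_1)\,r^\textup{out}(s)^{h-2}$, and multiplying by the prefactor $r^\textup{out}(s)^{-(h-1)}$ gives
\[
\hat{\mathbb R}\big(\hat\T_{x_1}(\hat f)\big)=\frac{r\big(\mathcal T_{x_1}(\hat f)\big)}{r^\textup{out}(s)}.
\]
Summing over $\hat f$ concludes the proof.
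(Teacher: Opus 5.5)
Your argument follows the paper's proof almost step for step. You use the same classification of $\mathcal T_{x_1}$ and of the multigraph arborescences $\hat{\mathbb T}_{x_1}$ by the boundary forest $\hat f$, the same per-forest formulas \eqref{perk} and \eqref{peRk_}, and the same reduction to the identity of Lemma~\ref{quandoquando}.

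The only real difference is how that identity is proved. The paper uses the directed matrix-tree theorem and computes $\det\big(r^{\textup{out}}(s)I-\underline 1\,w^T\big)$ from the spectrum of a diagonal-minus-rank-one matrix. Your Pr\"ufer route also works and is purely bijective. In a tree oriented towards $1$, the in-degree of $m$ is $\deg_T(m)-1$ for $m\neq 1$ and $\deg_T(1)$ for $m=1$. So the sum equals $w_1\sum_T\prod_m w_m^{\deg_T(m)-1}$, taken over labelled trees $T$ on $\{1,\dots,h\}$. This is $w_1W^{h-2}$ with $W=w_1+\dots+w_h$, because $m$ appears exactly $\deg_T(m)-1$ times in the Pr\"ufer code.

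Your other suggestion, the Markov chain tree theorem for the rates $q(l,m)=w_m$, is not sufficient on its own. It only shows that the weight of arborescences rooted at $1$ is $w_1/W$ times the total arborescence weight of the auxiliary chain. Identifying that total as $W^{h-1}$ still needs either the matrix-tree theorem (all nonzero Laplacian eigenvalues equal $W$) or the Pr\"ufer count. The Pr\"ufer argument is therefore the one you should actually rely on.
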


\begin{proof}
The case $r^\textup{out}(s)=0$ is trivial, thus without loss of generality we assume $r^\textup{out}(s)>0$.
Let us compute $r(\mathcal T_{x_1})$ by separating different contributions according to the previous classification:
\begin{equation} \label{class_sum}
	r(\mathcal T_{x_1}) \,=\, 
	 \sum_{k=1}^n \, \sum_{\{x_2,\dots,x_h\}\in P^{h-1}} \,\sum_{\hat f\in\hat{\mathcal F}_{\{x_1,x_2,\dots,x_h\}}} \,r\left(\mathcal T_{x_1}(\hat f)\right)
\end{equation}
where $P^{h-1}$ denotes the collection of subsets of $\hat V\setminus\{x_1\}$ with $h-1$ distinct vertices.
\smallskip

\textit{Case $h=1$}. Let $\hat f\in \hat{\mathcal T}_{x_1}=\hat{\mathcal F}_{x_1}$. Every arborescence $\tau\in\mathcal T_{x_1}$ with  $\tau\cap E^\textup{B}=\hat f\,$ has the form $\tau=\hat f\,\cup (s,y)$ for some $y\in \hat V$. Then:
\begin{equation} \label{per1}
r\left(\mathcal T_{x_1}(\hat f)\right) = \sum_{\tau\in\mathcal T_{x_1}(\hat f)} r(\tau) \,=\, r(\hat f)\,\sum_{y\in\hat V}r(s,y) \,=\, r(\hat f)\;r^\textup{out}(s)\,.
\end{equation}
\begin{figure}[h]
	\centering
	\begin{tikzpicture}[scale=0.95,
		vertex/.style={circle, fill=black, inner sep=1.5pt},
		every edge/.style={draw, thick, ->}
		]
		\node[vertex, label=above:$s$] (s) at (0,0) {};
		
		\def\n{6}    
		\def\r{1.8}  
		
		
		\node[vertex, label=right:{$x_1$}] (x1) at ({360/\n * (1-1)}:\r) {};
		\node[vertex] (x2) at ({360/\n * (2-1)}:\r) {};
		\node[vertex] (x3) at ({360/\n * (3-1)}:\r) {};
		\node[vertex] (x4) at ({360/\n * (4-1)}:\r) {};
		\node[vertex] (x5) at ({360/\n * (5-1)}:\r) {};
		\node[vertex, label=right:{$y$}] (x6) at ({360/\n * (6-1)}:\r) {};
		
		\draw[->, thick, shorten >=2pt, shorten <=2pt] (s) -- (x6);
		
		\draw[dashed, gray, ->, shorten >=2pt, shorten <=2pt] (x4) -- (x3);
		\draw[dashed, gray, ->, shorten >=2pt, shorten <=2pt] (x3) -- (x2);
		
		\draw[dashed, gray, ->, shorten >=2pt, shorten <=2pt] (x5) -- (x6);
		\draw[dashed, gray, ->, shorten >=2pt, shorten <=2pt] (x6) -- (x1);
		\draw[dashed, gray, ->, shorten >=2pt, shorten <=2pt] (x2) -- (x1);
		
	\end{tikzpicture}
	\hspace{40pt}
	\begin{tikzpicture}[scale=1,
		vertex/.style={circle, fill=black, inner sep=1.5pt},
		every edge/.style={draw, thick, ->}
		]
		
		\def\n{6}    
		\def\r{1.8}  
		
		
		\node[vertex, label=right:{$x_1$}] (x1) at ({360/\n * (1-1)}:\r) {};
		\node[vertex] (x2) at ({360/\n * (2-1)}:\r) {};
		\node[vertex] (x3) at ({360/\n * (3-1)}:\r) {};
		\node[vertex] (x4) at ({360/\n * (4-1)}:\r) {};
		\node[vertex] (x5) at ({360/\n * (5-1)}:\r) {};
		\node[vertex] (x6) at ({360/\n * (6-1)}:\r) {};

		\draw[dashed, gray, ->, shorten >=2pt, shorten <=2pt] (x4) -- (x3);
		\draw[dashed, gray, ->, shorten >=2pt, shorten <=2pt] (x3) -- (x2);
		
		\draw[dashed, gray, ->, shorten >=2pt, shorten <=2pt] (x5) -- (x6);
		\draw[dashed, gray, ->, shorten >=2pt, shorten <=2pt] (x6) -- (x1);
		\draw[dashed, gray, ->, shorten >=2pt, shorten <=2pt] (x2) -- (x1);
		
\end{tikzpicture}
\caption{On the left: an arborescence $\tau\in\mathcal T_{x_1}(\hat f)$ on $K_{n+1}$. On the right: an arborescence $t\in \mathbb T_{x_1}(\hat f)$ on the reduced multigraph $\hat{\mathbb K}_n$. In both cases the corresponding forest $\hat{f}$ obtained by keeping only the boundary edges (dashed ones) is the same and has a unique component.}
\label{esempio tau e f}
\end{figure}
\smallskip
	
\textit{Case $h=2$}. Let $\hat f\in \hat{\mathcal F}_{\{x_1, x_2\}}\,$, $x_2\in \hat V\setminus\{x_1\}$. Every $\tau\in \mathcal T_{x_1}$ with $\tau\cap E^\textup{B}=\hat f\,$ has the form $\tau=\hat f\,\cup (x_2,s) \cup (s,y)$ for some $y\in C_1$. 
Then:
\begin{equation} \label{per2}
	r\left(\mathcal T_{x_1}(\hat f)\right) =\,  r(\hat f)\; r(x_2,s)\, \sum_{y\in C_1} r(s,y) 
\end{equation}
where we recall that $C_1=C_1(\hat f)$ is the component of $\hat f$ containing $x_1$.
\smallskip

\textit{Case $h\leq n$}. Let $\hat f\in \hat{\mathcal F}_{\{x_1, x_2,\dots,x_h\}}\,$, $x_2,\dots,x_h\in \hat V\setminus\{x_1\}$ distinct points. 
Reasoning as before, in the general case we obtain:
\begin{equation}\label{perk}
	r\left(\mathcal T_{x_1}(\hat f)\right) \,=\, r(\hat f)\,\; r(x_2,s)\cdots r(x_h,s)\,\sum_{y\in C_1}r(s,y)
\end{equation}
%
\smallskip
	
Now, in order to compute $\hat r\left(\hat{\mathcal T}_{x_1}\right)$ it is convenient to use the multigraph $\mathbb{K}_n$ introduced in Remark \ref{remo}, since:
\begin{equation} \label{multi_uguale}
\begin{split}
	\hat r\left(\hat{\mathcal T}_{x_1}\right) &\,=
	\sum_{\hat\tau\in\hat{\mathcal T}_{x_1}} \prod_{e\in\hat \tau}\,\hat r(e) \,= \sum_{\hat\tau\in\hat{\mathcal T}_{x_1}} \prod_{e\in\hat \tau}\left(r(e)+r^*(e)\right) =\\[2pt]
&= \sum_{\hat\tau\in\hat{\mathcal T}_{x_1}} \sum_{\{A,B\}}\, \prod_{e\in A}r(e)\,\prod_{e\in B}r^*(e) \,= \sum_{t\in\hat{\T}_{x_1}} \prod_{e\in t}\,\mathbb R(e) \,=\, \hat{\mathbb R}\left(\hat{\mathbb T}_{x_1}\right) 
\end{split}
\end{equation} 
where the sum over $\{A,B\}$ runs over the partitions in two sets of the edges of $\hat\tau$.
Using identity \eqref{multi_uguale} and the classification introduced before the proposition:
\begin{equation}\label{class2_sum}
\begin{split}
\hat{r}\left(\hat{\mathcal T}_{x_1}\right) \,&=\, \hat{\mathbb R} \left(\hat{\mathbb T}_{x_1}\right) \,=\, 
\sum_{k=1}^n \, \sum_{\{x_2,\dots,x_k\}\in P^{k-1}} \,\sum_{\hat f\in\hat{\mathcal F}_{\{x_1,x_2,\dots,x_k\}}} \,\hat{\mathbb R}\left(\hat\T_{x_1}(\hat f)\right)
\end{split}
\end{equation} 
\smallskip

\textit{Case $h=1$}. Let $\hat f\in \hat{\mathcal T}_{x_1}\equiv\hat{\mathcal F}_{x_1}$. The only arborescence $t\in\hat{\mathbb T}_{x_1}$ with $t\cap E^\textup{B}=\hat f$ is $t=\hat f$ itself. In the multigraph the edges in $E^\textup{B}$ are weighted by $r$, hence
\begin{equation} \label{peR1}
	\hat{\mathbb R}\left(\hat{\mathbb T}_{x_1}(\hat f)\right) \,=\, 
r(\hat f)\,.
\end{equation}
\smallskip
	
\textit{Case $h=2$}. Let $\hat f\in \hat{\mathcal F}_{\{x_1, x_2\}}\,$, $x_2\in \hat V\setminus\{x_1\}$. Every $t\in \hat \T_{x_1}$ with $t\cap E^\textup{B}=\hat f\,$ has the form $t=\hat f\,\cup (x_2,y)^*$ for some $y\in C_1$. Then:
\begin{equation} \label{peR2}
	\hat{\mathbb R}\left(\hat\T_{x_1}(\hat f)\right) \,=\, r(\hat f)\; \sum_{y\in C_1}r^*(x_2,y) \,.
\end{equation}
\smallskip

\textit{Case $h\leq n$}. Let $\hat f\in \hat{\mathcal F}_{\{x_1, x_2,\dots,x_h\}}\,$, $x_2,\dots,x_h\in \hat V\setminus\{x_1\}$ distinct points. In the general case we obtain:
\begin{equation}\label{peRk}
	\hat{\mathbb R}\left(\hat{\mathbb T}_{x_1}(\hat f)\right) \,=\;
 r(\hat f) \sum_{(y_2,\dots ,y_h)\in T^h}\,r^*(x_2,y_2)\cdots r^*(x_h,y_h)
\end{equation}
where $T^h=T^h(C_1,\dots,C_h)=T^h(\hat f)$ denotes the set of all $(y_2,\dots ,y_h)\in(\hat V)^{h-1}$ ($y_{\ell}$'s are not necessarily distinct) such that 
the graph having as vertices the $h$ components $C_1,\dots,C_h$ of $\hat f$ and having an edge from $C_\ell$ to $C_m$ iff $y_{\ell}\in C_m$ is an arborescence directed torward $C_1$ (see Figure \ref{esempio componenti foresta}).
\begin{figure}[h]
	\centering
	\begin{tikzpicture}[scale=1,
		vertex/.style={circle, fill=black, inner sep=1.5pt},
		every edge/.style={draw, thick, ->},
		comp/.style={draw, blue, dashed, ellipse, inner sep=6pt}
		]
		\node[vertex, label=right:$s$] (s) at (0,0) {};
		\def\n{6}    
		\def\r{1.8}  
		\node[vertex, label=above:{$x_1$}] (x1) at ({360/\n * (1-1)}:\r) {};
		\node[vertex] (x2) at ({360/\n * (2-1)}:\r) {};
		\node[vertex, label=above:{$x_2$}] (x3) at ({360/\n * (3-1)}:\r) {};
		\node[vertex, label=above:{$x_3$}] (x4) at ({360/\n * (4-1)}:\r) {};
		\node[vertex] (x5) at ({360/\n * (5-1)}:\r) {};
		\node[vertex, label=right:{$y$}] (x6) at ({360/\n * (6-1)}:\r) {};
	
		\draw[->, thick, shorten >=2pt, shorten <=2pt] (s) -- (x6);
		\draw[->, thick, shorten >=2pt, shorten <=2pt] (x4) -- (s);
		\draw[->, thick, shorten >=2pt, shorten <=2pt] (x3) -- (s);
		
		\draw[dashed, gray, ->, shorten >=2pt, shorten <=2pt] (x2) -- (x3);
		
		\draw[dashed, gray, ->, shorten >=2pt, shorten <=2pt] (x6) -- (x1);
		\draw[dashed, gray, ->, shorten >=2pt, shorten <=2pt] (x5) -- (x4);
		
		
	\end{tikzpicture}
	\vskip 12pt
	\begin{tikzpicture}[scale=0.95,
		vertex/.style={circle, fill=black, inner sep=1.5pt},
		every edge/.style={draw, thick, ->},
		comp/.style={draw, blue, dashed, ellipse, inner sep=6pt}
		]
		\def\n{6}    
		\def\r{1.8}  
		\node[vertex, label=above:{$x_1$}] (x1) at ({360/\n * (1-1)}:\r) {};
		\node[vertex] (x2) at ({360/\n * (2-1)}:\r) {};
		\node[vertex, label=above:{$x_2$}] (x3) at ({360/\n * (3-1)}:\r) {};
		\node[vertex, label=above:{$x_3$}] (x4) at ({360/\n * (4-1)}:\r) {};
		\node[vertex] (x5) at ({360/\n * (5-1)}:\r) {};
		\node[vertex] (x6) at ({360/\n * (6-1)}:\r) {};
		
		\draw[dashed, gray, ->, shorten >=2pt, shorten <=2pt] (x2) -- (x3);
		
		\draw[dashed, gray, ->, shorten >=2pt, shorten <=2pt] (x6) -- (x1);
		\draw[dashed, gray, ->, shorten >=2pt, shorten <=2pt] (x5) -- (x4);
		
		\draw[->, thick, black, shorten >=2pt, shorten <=2pt] (x4) -- (x6);
		\draw[->, thick, black, shorten >=2pt, shorten <=2pt] (x3) -- (x6);
		
		
	\end{tikzpicture}
	\hfill
	\begin{tikzpicture}[scale=0.9,
		vertex/.style={circle, fill=black, inner sep=1.5pt},
		every edge/.style={draw, thick, ->},
		comp/.style={draw, blue, dashed, ellipse, inner sep=6pt}
		]
		\def\n{6}    
		\def\r{1.8}  
		\node[vertex, label=above:{$x_1$}] (x1) at ({360/\n * (1-1)}:\r) {};
		\node[vertex] (x2) at ({360/\n * (2-1)}:\r) {};
		\node[vertex, label=above:{$x_2$}] (x3) at ({360/\n * (3-1)}:\r) {};
		\node[vertex, label=above:{$x_3$}] (x4) at ({360/\n * (4-1)}:\r) {};
		\node[vertex] (x5) at ({360/\n * (5-1)}:\r) {};
		\node[vertex] (x6) at ({360/\n * (6-1)}:\r) {};
		
		\draw[dashed, gray, ->, shorten >=2pt, shorten <=2pt] (x2) -- (x3);
		
		\draw[dashed, gray, ->, shorten >=2pt, shorten <=2pt] (x6) -- (x1);
		\draw[dashed, gray, ->, shorten >=2pt, shorten <=2pt] (x5) -- (x4);
		
		\draw[->, thick, black, shorten >=2pt, shorten <=2pt] (x4) -- (x3);
		\draw[->, thick, black, shorten >=2pt, shorten <=2pt] (x3) -- (x6);
		
	\end{tikzpicture}
	\hfill
	\begin{tikzpicture}[scale=0.9,
		vertex/.style={circle, fill=black, inner sep=1.5pt},
		every edge/.style={draw, thick, ->},
		comp/.style={draw, blue, dashed, ellipse, inner sep=6pt}
		]
		\def\n{6}    
		\def\r{1.8}  
		\node[vertex, label=above:{$x_1$}] (x1) at ({360/\n * (1-1)}:\r) {};
		\node[vertex] (x2) at ({360/\n * (2-1)}:\r) {};
		\node[vertex, label=above:{$x_2$}] (x3) at ({360/\n * (3-1)}:\r) {};
		\node[vertex, , label=above:{$x_3$}] (x4) at ({360/\n * (4-1)}:\r) {};
		\node[vertex] (x5) at ({360/\n * (5-1)}:\r) {};
		\node[vertex] (x6) at ({360/\n * (6-1)}:\r) {};
		
		\draw[dashed, gray, ->, shorten >=2pt, shorten <=2pt] (x2) -- (x3);
		
		\draw[dashed, gray, ->, shorten >=2pt, shorten <=2pt] (x6) -- (x1);
		\draw[dashed, gray, ->, shorten >=2pt, shorten <=2pt] (x5) -- (x4);
		
		\draw[->, thick, black, shorten >=2pt, shorten <=2pt] (x4) -- (x6);
		\draw[->, thick, black, shorten >=2pt, shorten <=2pt] (x3) -- (x4);
		
		
\end{tikzpicture}
\caption{Above: an arborescence $\tau\in\mathcal T_{x_1}(\hat f)$ on the original graph $K_{n+1}$. Below: three different arborescences $t_1,t_2,t_3\in \mathbb T_{x_1}(\hat f)$ on the reduced multigraph $\hat{\mathbb K}_n$. The forest $\hat{f}$ obtained by keeping only the boundary edges (dashed edges) is the same in each case and has $h=3$ components rooted at $x_1,x_2,x_3$. 
On the multigraph there are several ways of connecting these components.}
\label{esempio componenti foresta}
\end{figure}
Recalling the form of the rates $r^*$ from \eqref{r*}, the previous identity rewrites as
\begin{equation}\label{peRk_}
	\hat{\mathbb R}\left(\hat{\mathbb T}_{x_1}(\hat f)\right) \,=\,
r(\hat f)\;\ \frac{r(x_2,s)\cdots r(x_h,s)}{\left(r^\textup{out}(s)\right)^{h-1}} \sum_{(y_2,\dots ,y_k)\in T^h}\,r(s,y_2)\cdots r(s,y_h)
\end{equation}
\smallskip

By the following Lemma \ref{quandoquando}, comparing \eqref{per1}-\eqref{peR1} and \eqref{perk}-\eqref{peRk_} we obtain
\begin{equation}
 r\left(\mathcal T_{x_1}(\hat f)\right) \,=\, r^\textup{out}(s)\,\ \hat{\mathbb R}\left(\hat{\mathbb T}_{x_1}(\hat f)\right)
\end{equation}
for each spanning forest $\hat f$ of the reduce graph $\hat K_n$ with a root at $x_1$.
Finally, by \eqref{class_sum} and \eqref{class2_sum}, the latter identity implies \eqref{eq:lemmarnout}.
\end{proof}
\smallskip

\begin{lemma}\label{quandoquando}
$2\leq h\leq n$. Let $C_1,\dots,C_h$ be a partition of $\hat V$ and $T^h=T^h(C_1,\dots,C_n)$ defined as after \eqref{peRk}. Then:
\begin{equation}\label{let}
	\sum_{(y_2,\dots ,y_h)\in T^h}r(s,y_2)\cdots r(s,y_h) 	\,=\, 
\left(r^\textup{out}(s)\right)^{h-2}\;\sum_{y\in C_1}r(s,y)
\end{equation}
\end{lemma}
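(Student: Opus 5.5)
The plan is to reduce \eqref{let} to a weighted count of arborescences on the $h$ components, and then evaluate that count with the directed matrix-tree theorem and the matrix determinant lemma.

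\textbf{Step 1: reduce to a weighted count on components.} For $m=1,\dots,h$ I set $w_m:=\sum_{y\in C_m} r(s,y)$, so that $W:=\sum_{m=1}^h w_m=r^\textup{out}(s)$. An element $(y_2,\dots,y_h)\in T^h$ is determined by two pieces of data:
\begin{itemize}
\item an arborescence $\sigma$ rooted at $C_1$ on the vertex set $\{C_1,\dots,C_h\}$, where each $C_\ell$ with $\ell\ge2$ has a unique out-edge $(C_\ell,C_{m(\ell)})$;
\item for each $\ell\ge 2$, a choice of $y_\ell\in C_{m(\ell)}$.
\end{itemize}
A choice $y_\ell\in C_\ell$ would create a self-loop, so it is automatically excluded by the acyclicity requirement. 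Summing over the free choices of the $y_\ell$ factorizes the left-hand side of \eqref{let} as
\[
\sum_{\sigma}\prod_{\ell=2}^h w_{m(\ell)} .
\]
This is the total weight $\mathcal W_1$ of arborescences rooted at $C_1$ in the complete digraph on $h$ nodes, where the edge $(C_\ell,C_m)$, $\ell\neq m$, has weight $w_m$. The weight depends only on the target. The claim \eqref{let} therefore becomes $\mathcal W_1=w_1W^{h-2}$.

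\textbf{Step 2: evaluate $\mathcal W_1$ by the matrix-tree theorem.} By the directed matrix-tree theorem, $\mathcal W_1$ equals the determinant of the Laplacian with the row and column of $C_1$ deleted. The total out-weight of node $\ell$ is $W-w_\ell$, since there are no self-loops. Hence the reduced $(h-1)\times(h-1)$ matrix, indexed by $\ell,m\in\{2,\dots,h\}$, has diagonal entries $W-w_\ell$ and off-diagonal entries $-w_m$. In compact form it is
\[
W\,I-\mathbf 1\,w'^{T},\qquad w'=(w_2,\dots,w_h).
\]
The matrix determinant lemma then gives
\[
\det\bigl(W I-\mathbf 1 w'^T\bigr)=W^{h-1}\Bigl(1-\tfrac{1}{W}\textstyle\sum_{m\ge2}w_m\Bigr)=W^{h-2}\,w_1,
\]
which is exactly \eqref{let}.

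\textbf{Main obstacle and fallback.} The only delicate point is the bijection in Step 1: one must check that the admissible tuples $(y_2,\dots,y_h)$ correspond exactly to component-level arborescences together with free choices of endpoints. If $W=0$ the identity is trivial, and the case $r^\textup{out}(s)=0$ is excluded in the proof of Proposition~\ref{Lemmarnout} anyway. If one prefers to avoid the matrix-tree theorem, the same identity $\mathcal W_1=w_1W^{h-2}$ is the refined Cayley formula. It can be proved by induction on $h$, removing a leaf node $C_\ell$ ($\ell\ne1$): this contributes a factor $\sum_{m\neq\ell}w_m$ and reduces to $h-1$ components with $w_\ell$ dropped. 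Summing over which nodes are leaves requires some care, so I would use the determinant argument as the main route.
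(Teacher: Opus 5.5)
Your proof is correct and follows the paper's route. You make the same reduction to arborescences rooted at $C_1$ on the component digraph with target-dependent weights $w_m$, and you apply the same directed matrix-tree theorem to the rank-one perturbation $r^{\textup{out}}(s)\,I-\underline 1\,w^T$. The only difference is the determinant evaluation: you use the matrix determinant lemma, while the paper reads it off the eigenvalues ($w_1$ on $\underline 1$, and $r^{\textup{out}}(s)$ with multiplicity $h-2$ on the hyperplane orthogonal to $w$), and your version handles degenerate weights a bit more uniformly.
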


\begin{proof}
Consider a complete digraph $A$ with nodes $C_1,\dots,C_h$ and weight of every edge $(C_{\ell},C_m)$, $\ell\neq m$, equal to $w(C_{\ell},C_m):=\sum_{y\in C_m}r(s,y)=: w_m$. The weight of an edge depends only on the arrival node. Note also that $\sum_{m=1}^h w_m = r^{out}(s)$.
Let $\mathcal T^A_{C_1}$ be the set of arborescences of $A$ rooted at $C_1$. The left hand side of \eqref{let} is $w\left(\mathcal T^A_{C_1}\right)$.

By the directed version of the matrix tree theorem, $w\left(\mathcal T^A_{C_1}\right)$ can be computed as the determinant of a Laplacian matrix \cite{JJ}. 
Precisely, consider a $h\times h$ matrix $L$ indexed by the vertices $C_{\ell}$.
The diagonal entries are $L_{\ell,\ell}=\sum_{m\neq \ell}w(C_{\ell},C_m) = r^\textup{out}(s)-w_{\ell}$ and the off-diagonal entries are $L_{\ell,m}=-w(C_{\ell},C_m)=-w_m\,$. 
Let $L^1$ be the $(h-1)\times (h-1)$ matrix obtained by removing the first row and column from $L$. The directed version of the matrix tree theorem \cite{JJ} states that $w\left(\mathcal T^A_{C_1}\right)=\det{L^1}$.
Observe that $L^1$ is the difference of a diagonal and a rank-one matrix:
\begin{equation}\label{special}
	L^1 \,=\, r^\textup{out}(s)\,I \,-\, \underline 1\,w^T\,,
\end{equation}
where $I$ is the $(h-1)\times(h-1)$ identity matrix, $\underline 1$ is the $h-1$ dimensional column vector having all entries equal to one, and $w^T$ is the $h-1$ dimensional row vector given by $w^T=(w_2, w_3, \dots , w_h)$.
	The determinant of $L^1$ can be easily computed due to the special form \eqref{special}. 
	In particular $\underline 1$ is an eigenvector of $L^1$ with eigenvalue $w_1$. If we consider a basis of the $h-2$ dimensional hyperplane orthogonal to $w$, we obtain $h-2$ eigenvectors with eigenvalues all equal to $r^\textup{out}(s)$. Then
\begin{equation}
	\det{L^1} \,=\,\left(r^\textup{out}(s)\right)^{h-2}\,w_1 \,,
\end{equation}
which coincides with the right hand side of \eqref{let}.
This concludes the proof.
\end{proof}

\subsection{General case} 
Consider as usual $(V,E,r)$ a finite strongly connected digraph where $V$ is partitioned as $\hat V\cup S$.
As discussed in Section \ref{sec: main}, 
we can introduce a \textit{trace Markov chain} with transition graph $(\hat V,\hat E,\hat r)$.
The reduced rates $\hat r$ are defined by \eqref{reduced} and rewritten in combinatorial form as \eqref{reduced1}.
The following is a generalization to the directed case of a result of \cite{CPV} and extends Proposition \ref{Lemmarnout}.

\begin{theorem} \label{teo:treesrelation}
Let $x \in \hat V$. The weight of the set $\mathcal T_{x}$ of arborescences of $(V,E)$ and the reduced weight of the set $\hat{\mathcal T}_{x}$ of arborescences of $(\hat V,\hat E)$ are related by:
\begin{equation} \label{treesrelation}
	r\left(\mathcal{T}_x\right) = r\left(\mathcal{F}_{\hat V}\right)\, \hat{r}\left(\hat{\mathcal{T}}_x\right)
\end{equation}
where $\mathcal{F}_{\hat V}$ denotes the collection of spanning forests of $(V,E)$ with root set $\hat V$. 
\end{theorem}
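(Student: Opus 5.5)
We argue by induction on $|S|=|V|-|\hat V|$, removing one vertex of $S$ at a time and applying Proposition \ref{Lemmarnout} at each step. The base case $S=\emptyset$ is trivial: then $\mathcal F_{\hat V}$ consists of the empty forest, so $r(\mathcal F_{\hat V})=1$, and $\hat r=r$.

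First we reduce from a general digraph to the complete setting of Section \ref{sec:stardelta}. We complete $(V,E)$ to the complete digraph on $V$ by giving weight $0$ to the missing pairs. This changes neither $r(\mathcal T_x)$ nor $r(\mathcal F_{\hat V})$, since subgraphs using a zero-weight edge contribute nothing. For $|S|\ge1$, fix $s\in S$ and let $V'=V\setminus\{s\}$, with rates $r'$ given by the single-node reduction \eqref{single_reduced}, i.e. \eqref{spli}. We need $r^{\rm out}(s)>0$. This holds whenever $(V,E)$ is strongly connected and $|V|\ge2$. The reduced graph $(V',E',r')$ is again strongly connected, because it is the trace chain. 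Proposition \ref{Lemmarnout}, applied with root $x\in\hat V\subseteq V'$, gives
\[
r(\mathcal T_x)=r^{\rm out}(s)\,r'(\mathcal T'_x).
\]
By the transitivity of traces noted after \eqref{reduced1}, tracing $(V',r')$ onto $\hat V$ gives the same rates $\hat r$. The induction hypothesis, applied to $(V',r')$ with $S'=S\setminus\{s\}$, then yields
\[
r'(\mathcal T'_x)=r'(\mathcal F'_{\hat V})\,\hat r(\hat{\mathcal T}_x).
\]
It remains to prove the forest identity
\[
r(\mathcal F_{\hat V})=r^{\rm out}(s)\,r'(\mathcal F'_{\hat V}).
\]

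For this identity, the key observation is that a spanning forest with root set $\hat V$ is the same as an arborescence of the graph obtained by glueing all vertices of $\hat V$ into a single root $\rho$. Concretely, let $\tilde V=S\cup\{\rho\}$ and set $\tilde r(z,\rho)=\sum_{y\in\hat V}r(z,y)$ for $z\in S$. Edges out of $\hat V$ are discarded, since no forest uses them. Expanding the sums gives $r(\mathcal F_{\hat V})=\tilde r(\tilde{\mathcal T}_\rho)$. We then check that the single-node reduction at $s$ commutes with this glueing. For $z,z'\in S\setminus\{s\}$ this is immediate. For edges into $\rho$ we have $r'(z,\rho)=\sum_{y\in\hat V}\bigl(r(z,y)+r(z,s)r(s,y)/r^{\rm out}(s)\bigr)$, which is exactly the star-delta rule applied to $\tilde r$, and the out-rate of $s$ is the same $r^{\rm out}(s)$ in both graphs. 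Therefore Proposition \ref{Lemmarnout}, applied to the glued graph with root $\rho$, gives the forest identity. One degenerate case needs care: when $S=\{s\}$ the glued graph has two vertices, and Proposition \ref{Lemmarnout} (with $n=1$) still applies trivially.

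Combining the three displays:
\[
r(\mathcal T_x)=r^{\rm out}(s)\,r'(\mathcal F'_{\hat V})\,\hat r(\hat{\mathcal T}_x)=r(\mathcal F_{\hat V})\,\hat r(\hat{\mathcal T}_x),
\]
which is \eqref{treesrelation}.

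The main obstacle is the glueing step, namely verifying carefully that the star-delta reduction commutes with merging $\hat V$ into a single root. The subtle points are that the reduction creates edges from $z$ into several $y\in\hat V$, which must all be summed into $\tilde r(z,\rho)$, and that edges $(y,s)$ with $y\in\hat V$ must legitimately be dropped. A fallback, if this becomes messy, is to avoid the induction altogether: use \eqref{reduced1}, which expresses $\hat r$ as a ratio of forest weights, and prove directly a bijective decomposition of arborescences of $(V,E)$ rooted at $x$ into a forest in $\mathcal F_{\hat V}$ glued with reduced edges.
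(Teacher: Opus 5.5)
Your argument is correct. It differs from the paper's proof in two ways: which vertex is peeled off last, and how the forest factor is controlled.

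The paper first applies the induction hypothesis, tracing $V$ onto $\widetilde V=\hat V\cup\{y\}$, and only then removes $y$ with Proposition \ref{Lemmarnout}. The resulting factor $\sum_{z\in\hat V}\widetilde r(y,z)$ is evaluated through the forest formula \eqref{reduced1}. This gives $r(\mathcal F_{\hat V})/r(\mathcal F_{\widetilde V})$, which cancels against the inductive factor $r(\mathcal F_{\widetilde V})$.

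You remove $s$ first, so you need the multiplicativity $r(\mathcal F_{\hat V})=r^\textup{out}(s)\,r'(\mathcal F'_{\hat V})$ instead. Your contraction of $\hat V$ to a single root $\rho$ does prove it:
\begin{itemize}
\item forests with root set $\hat V$ are exactly the arborescences rooted at $\rho$, once parallel edges into $\rho$ are summed;
\item the out-rate of $s$ is unchanged by the contraction;
\item the star-delta reduction at $s$ commutes with the contraction, since edges leaving $\rho$ and loops at $\rho$ never occur in arborescences rooted at $\rho$;
\item Proposition \ref{Lemmarnout} is a purely algebraic identity, so it needs no connectivity of the contracted graph.
\end{itemize}

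The paper's route is shorter, since \eqref{reduced1} is already available, but it ultimately rests on Pitman's forest representation \eqref{Pitman} of hitting probabilities. Yours is self-contained within Section \ref{sec:stardelta}. It uses only Proposition \ref{Lemmarnout} (twice), the single-node formula \eqref{single_reduced}, which follows directly from \eqref{reduced}, and transitivity of traces.
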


\begin{proof}
We use induction on the number of removed vertices.

First of all, Proposition \ref{Lemmarnout} was written for a complete graph but zero weights were allowed, so it is valid for any graph: it corresponds to the statement of this theorem in the case $|S|=1$. 

Now, assume that \eqref{treesrelation} holds after the removal of $|S|-1$ nodes and prove that the statement also holds for $S$.
Let $y\in S$ and consider $\widetilde S:=S\setminus\{y\}\,$, $\widetilde V:=V\setminus\widetilde S=\hat V\cup \{y\}\,$. Denote by $\widetilde r$, $\widetilde {\mathcal T}_x$ the corresponding reduced rates and arborescences from $V$ to $\widetilde V$.
By the inductive hypothesis
\begin{equation} \label{ciaoGiulia}
	r\left(\mathcal T_x\right) \,=\, r\left(\mathcal F_{\widetilde V}\right)\, \widetilde{r}\left(\widetilde {\mathcal T}_x\right)\,.
\end{equation}
$(\hat V, \hat r)$ can be seen both as a $|S|$-nodes reduction starting from $(V,r)$ or as a $1$-node reduction starting from $(\widetilde V,\widetilde r)$. As discussed in Section \ref{sec: main}, the equivalence of these two constructions is clear from the trace process definition. 
We now apply Proposition \ref{Lemmarnout} to the $1$-node reduction from $\widetilde V$ to $\hat V$:
\begin{equation} \label{joe}
	\widetilde{r}\left(\widetilde{\mathcal T}_x\right) \,=	\, \bigg(\sum_{z\in \hat V}\widetilde{r}(y,z)\bigg)\; \hat{r}\left(\hat {\mathcal T}_x\right)\,.
\end{equation}
On the other hand, by adapting formula \eqref{reduced1} to the reduced rates $\widetilde r$ we have:
\begin{equation} \label{nonmiserve}
	\widetilde{r}(y,z) \,=\, \frac{r\left(\mathcal{F}_{\hat V}(y \rightarrow z)\right)}{r\left(\mathcal{F}_{\widetilde V}\right)} \,.
\end{equation}
Inserting \eqref{nonmiserve} into \eqref{joe} into \eqref{ciaoGiulia}, we conclude \eqref{treesrelation}. 
\end{proof}
\smallskip

\section{Proof of the main theorem} \label{sec:proof}

In this section we prove the main Theorem \ref{ilteo} from which the final formula \eqref{tantiprod} for the limiting invariant measure was deduced. 
We recall that $(V,E,r_N)$ is a strongly connected graph satisfying Assumption \ref{FRL2} and $\pi_N$ is its unique invariant measure.
$E_F$ is the subset of fast edges, $Q\in\mathcal Q$ are the equivalence classes of $V$ with respect to fast communication, which form a DAG, and $M\in\mathcal M$ are the local minima of this DAG. $\hat V\subseteq V$ denotes the support of $\mathcal M$.
$\mathcal T$ is the set of arborescences of $(V,E)$ and $\mathcal T^F$ denotes the subset of fast arborescences.

The validity of the  next statements extends to each step of the iteration described in Section \ref{sec: iteration}, since at each step we have a multiscale Markov chain.

\subsection{Lemmas about fast arborescences}

\begin{lemma} \label{Lemma1}
\begin{equation}
\lim_{N\to\infty}\pi_N(z)=0\quad \forall\,z\in V\setminus\hat V \,.
\end{equation}
\end{lemma}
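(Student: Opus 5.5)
By Proposition \ref{existence}, $\pi(z)=\lim_N r_N(\mathcal T^F_z)/r_N(\mathcal T^F)$. It therefore suffices to show that for $z\in V\setminus\hat V$ no arborescence rooted at $z$ is fast. Concretely, for every $\tau\in\mathcal T_z$ I will construct an arborescence $\tau'$ rooted at some other vertex with
\begin{equation}
\lim_{N\to\infty}\frac{r_N(\tau)}{r_N(\tau')}=0 .
\end{equation}
Since the set $\mathcal T$ is finite, this forces $r_N(\mathcal T_z)/r_N(\mathcal T)\to 0$. The ratio of a weight to a finite sum of weights is handled as in \eqref{toy}.

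The construction is a single edge swap. Since $z\notin\hat V$, the class $Q\ni z$ in the fast DAG is not minimal. Within the fast graph $(V,E_F)$, $z$ can therefore reach a minimal class, so $z$ has at least one outgoing fast edge $(z,w)\in E_F$. Let $(u,v)$ be the unique edge of $\tau$ leaving $w$ on the path from $w$ to $z$, so $u=w$. Set
\begin{equation}
\tau':=\big(\tau\setminus\{(w,v)\}\big)\cup\{(z,w)\}.
\end{equation}
This is the standard re-rooting: the result is an arborescence rooted at $w$. Indeed, $w$ loses its outgoing edge, and $z$ gains one pointing into the subtree that previously drained into $z$. There is no cycle because $w$ is now the root and the path from $z$ goes to $w$.

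It remains to compare weights. We have
\begin{equation}
\frac{r_N(\tau)}{r_N(\tau')}=\frac{r_N(w,v)}{r_N(z,w)} .
\end{equation}
This ratio tends to $0$ unless $(w,v)$ is itself fast. The main obstacle is exactly this case, where $(w,v)$ is fast and the swap gives only a comparable weight, not a strictly larger one. To handle it I iterate. Among the vertices from which a path through fast edges leads to a minimal class, I choose the fast edge $(z,w)$ along a shortest fast path from $z$ into $\hat V$, and I repeat the swap along that path. At each step the root moves one fast edge closer to $\hat V$ while the weight stays comparable or increases. The process stops at a root in $\hat V$ or as soon as a strict gain appears.

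This iteration alone only shows that some arborescence rooted in $\hat V$ has weight at least comparable to that of $\tau$, not strictly larger. To obtain a strict gain I use the structure of $\tau$ near the minimal class. Consider the root $m\in M\subseteq\hat V$ reached, or any vertex of $M$ on the fast path. The arborescence $\tau$ is rooted at $z\notin M$, so some vertex of $M$ has its $\tau$-edge leaving $M$. Since $M$ is closed in $(V,E_F)$, that edge is not fast. The swap that makes this vertex, say $m'$, the root removes that slow edge and inserts a fast one, giving a strict gain. Combining the two steps, I re-root at $m'$ by a chain of swaps, each replacing a removed edge with a fast edge, and at least one removed edge is slow. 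The ratio $r_N(\tau)/r_N(\tau')$ is a product of ratios, each with a finite limit by Assumption \ref{FRL}, and at least one factor tends to $0$. The overall ratio therefore tends to $0$, so $\pi(z)=0$.
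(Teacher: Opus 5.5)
Your proof is correct, but the slow edge that gives the strict gain comes from a different place than in the paper.

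The paper stays inside the class $Q\ni z$. It walks a fast path inside $Q$ from $z$ to the tail $w$ of a fast edge $(w,w')$ leaving $Q$, adding one fast edge at a time. When $(w,w')$ is finally added, the cycle it closes cannot be entirely fast, since otherwise $w'$ would fast-communicate with $w$. A slow edge of that cycle is therefore removed. So the gain comes from the non-minimality of $Q$, and it is guaranteed whatever swaps were done before. The dominating arborescence is rooted at the tail of that slow edge, possibly outside $\hat V$.

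You instead always perform pure re-rootings and push the root into a downstream minimal class $M$. You get the slow edge from the fact that $\tau$, rooted at $z\notin M$, must leave the closed class $M$ through a slow edge at some $m'\in M$. This is in the spirit of Lemma~\ref{Lemma2}, and it also shows that $\tau$ is strictly dominated by an arborescence rooted in a minimal class reachable from $z$.

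The price is a bookkeeping point you leave implicit. The final swap removes the slow $\tau$-edge out of $m'$ only if that out-edge has not already been replaced by an earlier swap. With your choices this does hold. The shortest fast path into $\hat V$ meets $\hat V$ only at its endpoint $m$, and you can continue along a simple fast path inside $M$ to $m'$. Then all visited vertices are distinct, so each removed edge is the original $\tau$-out-edge of its vertex. You should say this explicitly.

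More simply, you can drop the first phase and re-root directly along a shortest fast path from $z$ to $m'$. Then $r_N(\tau)/r_N(\tau')$ is a product of ratios $r_N(e)/r_N(f)$ with $f\in E_F$, each with a finite limit, and the last one has $e$ slow and tends to $0$.
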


\begin{proof}
	Let $z\in V\setminus\hat V$. By the Markov chain tree theorem, we need to prove
\begin{equation}
	\lim_{N \rightarrow \infty} \frac{r_N\left(\mathcal{T}_z\right)}{ r_N\left(\mathcal T\right)} =0 \,.
\end{equation}
Since numerator and denominator are finite sums of positive contributions, it suffices to prove that for every $\tau_z\in \mathcal T_z$ there exists $\tau_*\in \mathcal T$ such that
\begin{equation} \label{eqLemma1}
	\lim_{N \rightarrow \infty} \frac{r_N(\tau_z)}{ r_N(\tau_*)} =0 \,.
\end{equation}
	Let $Q\in \mathcal Q\setminus \mathcal M$ be the non-minimal class containing $z$. By definition there exists a fast edge exiting from $Q$, we call it $(w,w')\in E_F$ with $w\in Q$, $w'\notin Q$.  
\smallskip
	
	First assume $w=z$. Let $\tau_z\in\mathcal T_z$.
	We consider the graph $\tau_z\cup(z,w')\,$: this is a unicyclic graph where $z$ and $w'$ belong to the unique cycle. This cycle must contain at least one edge $e$ not in $E_F$ (otherwise $w'$ would also belong to the equivalence class $Q$). Removing this edge from the unicyclic graph we obtain a new arborescence $\tau_*\in\mathcal T$. And we have:
\begin{equation}
	\lim_{N\to\infty} \frac{r_N(\tau_z)}{r_N(\tau_*)} \,=\, \lim_{N\to\infty} \frac{r_N(e)}{r_N(z,w')} \,=\, 0 \,,
\end{equation}
since the edge $(z,w')$ is fast while $e$ is not and all the other edges are not changed. 
	
	Now suppose $w\neq z$. Since $z,w\in Q$, there exists a fast path from $z$ to $w$ in $E_F$. Denote it by $(z_0=z,\,z_1,\,z_2,\dots,z_n=w)$ and start by considering the unicyclic graph $\tau_z\cup(z,z_1)$. If all the edges in the unique cycle are fast we remove the edge in the cycle exiting from $z_1$ and we obtain a new arborescence $\tau_1\in\mathcal T_{z_1}$. We have
$\lim_{N\to\infty} \frac{r_N(\tau_z)}{r_N(\tau_1)} \,=\, 1\, 
$,
since $\tau_z$, $\tau_1$ have the same number of edges of each velocity.
We repeat the procedure for the next unicyclic graph $\tau_1\cup(z_1,z_2)$ and we continue until we obtain a unicyclic graph $\tau_{i_*-1}\cup(z_{i_*-1},z_{i_*})$ where the unique cycle contains at least one edge not in $E_F$ (at the latest this happens for $i_*=n$, as previously seen). Then we remove this edge and obtain an arborescence $\tau_{i_*}$ such that
\begin{equation} \begin{split}
	\lim_{N\to\infty} \frac{r_N(\tau_z)}{r_N(\tau_{i_*})} \,=\, \lim_{N\to\infty} \frac{r_N(\tau_z)}{r_N(\tau_1)}\; \frac{r_N(\tau_1)}{r_N(\tau_2)} \cdots \frac{r_N(\tau_{i_*-1})}{r_N(\tau_{i_*})} \,=\, 0 \,. \qedhere
\end{split} \end{equation}
\end{proof}

The above lemma says that for any $z\in V\setminus \hat V$ we have $\mathcal T_z\cap \mathcal T^F=\emptyset$. Therefore, all arborescences of this type can be neglected in the computation of the limit of $\pi_N$ (Proposition \ref{existence}).
We now consider another class of arborescences that has empty intersection with $\mathcal T^F$.
We call \textit{slow edges} all the edges of $E\setminus E_F$.

\begin{lemma}
	\label{Lemma2} Let $\tau\in \mathcal T$. Let $M\in \mathcal M$ be a minimal class. If at least one of the following properties is verified then $\tau\not\in \mathcal T^F$: 
\begin{itemize}	
\item[i)] $\tau$ is rooted at $x\in M$ 
and contains an edge going out from $M$;
\item[ii)] 
$\tau$ contains two or more edges going out from $M$;
\item[iii)] 
$\tau$ contains a slow edge connecting two points of $M$.
\end{itemize}
\end{lemma}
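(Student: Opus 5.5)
The plan follows the strategy of Lemma \ref{Lemma1}: for each case I exhibit, starting from $\tau$, another arborescence $\tau_*\in\mathcal T$ with $\lim_{N\to\infty} r_N(\tau)/r_N(\tau_*)=0$. The construction is always "add a fast edge, creating a unique cycle, then remove a slow edge from that cycle". Two facts will be used repeatedly. First, every edge exiting a minimal class $M$ is slow, since $M$ is closed in $(V,E_F)$. Second, $(M,E_F[M])$ is strongly connected, so any two points of $M$ are joined by a fast path inside $M$.

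\textit{Case iii).} Let $(u,v)\in\tau$ be slow with $u,v\in M$. Take a fast path in $E_F[M]$ from $u$ to $v$, and let $\sigma$ be its first edge, going from $u$ to $u_1$. Replace $(u,v)$ by $(u,u_1)$. If $u_1$ is not a descendant of $u$ in $\tau$, i.e. the path in $\tau$ from $u_1$ to the root does not pass through $u$, then the result is an arborescence with the same root. It has one slow edge replaced by a fast one, so the ratio tends to zero.

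If instead $u_1$ is a descendant of $u$, I proceed as in the iterative argument of Lemma \ref{Lemma1}. Consider $\tau\setminus(u,v)\cup(u,u_1)$: it contains a cycle through $u$ and $u_1$. If this cycle contains a slow edge, remove it and the ratio goes to zero (the edge $(u,v)$ was already removed, so the count of slow edges decreases). If all edges on the cycle are fast, I instead keep $(u,v)$, so that the total slow count is unchanged, and move along the fast path. Making this bookkeeping rigorous is the delicate point.

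A cleaner alternative is to do case iii) after case i), via a global argument. Within $M$, the restriction of $\tau$ plus the exiting edges determines the structure. I would instead argue directly: for each vertex $u\in M$ whose outgoing $\tau$-edge is slow, replace its outgoing edge with a fast edge chosen along a fast arborescence of $(M,E_F[M])$ rooted at a suitable vertex.

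\textit{Cases i) and ii)}, which I also use to handle iii), go through a block replacement. Let $\tau|_M$ denote the edges of $\tau$ exiting vertices of $M$. This set is a forest on $M$ together with some exiting edges, and every vertex of $M$ except possibly the root has exactly one out-edge. Define $R$ as the set of vertices of $M$ whose $\tau$-out-edge leaves $M$, plus the root if it lies in $M$.

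Now replace all out-edges of $M\setminus\{m\}$ by a fast arborescence of $(M,E_F[M])$ rooted at a single vertex $m$. In case i) take $m=x$, the root. In case ii) take $m\in R$, and keep that vertex's exiting edge. The result is still an arborescence of $(V,E)$: vertices outside $M$ that reached $M$ now reach $m$, and from $m$ the path continues as before, because $m$'s original path does not re-enter $M$ except possibly at points whose out-edges changed. Since every path into $M$ now funnels into $m$, no cycle can form, provided $m$'s exit path in $\tau$ does not return to $M$. I would choose $m$ as the element of $R$ whose path to the root exits $M$ last (never returns), which exists.

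In the original $\tau$ the vertices of $M$ carry $|M|-1$ out-edges plus at least one extra exiting edge in case i), or at least two exiting slow edges in case ii). The new tree has $|M|-1$ fast edges plus at most one slow exiting edge. The multiset of velocities therefore strictly improves: at least one slow edge is replaced by a fast one, and the other edges are matched with edges of equal or faster velocity. Hence $r_N(\tau)/r_N(\tau_*)\to0$ by Assumption \ref{FRL}.

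Case iii) follows by the same replacement with $m$ chosen as above, because a slow internal edge is swapped for a fast one. The main obstacle is verifying carefully that the replacement yields a genuine arborescence (no cycles, correct root). This requires choosing $m$ so that the portion of $\tau$ outside $M$ is reused intact.
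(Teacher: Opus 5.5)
Your block replacement is correct and is essentially the paper's proof: you swap the $\tau$-out-edges of $M\setminus\{m\}$ for a fast arborescence of $(M,E_F[M])$ rooted at $m$, with $m$ the root in case i) and an exit vertex of $M$ otherwise, and note that at least one removed edge is slow in each of i), ii), iii), so the tentative single-edge swap for iii) can simply be discarded. Taking $m$ in case ii) as the last vertex of $M$ on a $\tau$-path to the root is actually necessary and makes your argument tighter than the paper's, which uses an arbitrary exit vertex $z$ and gets a cycle if the $\tau$-path from $z$ re-enters $M$ (minor slip: in case i) the exiting edge is one of the $|M|-1$ out-edges of $M$, not an extra one).
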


\begin{proof}
	It suffices to show that, given an arborescence $\tau\in\mathcal T$ having at least one of the features described above, it is possible to construct another arborescence $\tau_*\in\mathcal T$ such that \eqref{eqLemma1} holds true.
The constructions are similar.
\smallskip

i) By hypothesis there exists an edge $(z,y)\in\tau$ with $z\in M$ and $y\notin M$.  By definition of $M$, all the edges going out from $M$ are necessarily slow, i.e., $(z,y)\in E\setminus E_F$. Of course no edge of $\tau$ exits from the root $x$.

On the other hand the graph $(M,E_F[M])$ is strongly connected. So we can remove from $\tau$ all the edges starting from points of $M$ and replace them by an arborescence of $(M,E_F[M])$ rooted at $x$. In this way we obtain a new arborescence $\tau_*\in\mathcal T_x\,$. $\tau_*$ verifies \eqref{eqLemma1} since we removed $|M|-1$ edges, at least one slow, and we replaced them by the same number of edges, all fast.
\smallskip

ii) Suppose $\tau$ is not rooted inside class $M$ (otherwise we fall in the previous case). By hypothesis there exist two edges $(z,y),\,(z',y')\in\tau$ with $z,z'\in M$ distinct and $y,y'\notin M$. $(z,y),\,(z',y')$ must be slow.
We remove from $\tau$ all the edges starting from points of $M$ except for $(z,y)$, and we replace them by an arborescence of $(M,E_F[M])$ rooted at $z$. As before, we obtain a new arborescence $\tau_*\in\mathcal T$ that verifies \eqref{eqLemma1}.
\smallskip

iii) Suppose either $\tau$ is rooted at $x\in M$ and no edge of $\tau$ goes out from $M$ or $\tau$ is rooted elsewhere and only one edge of $\tau$ goes out from $M$ (otherwise we fall in i) or ii)). In the latter case we call $(x,y)\in\tau$ with $x\in M$, $y\notin M$ the unique edge going out from $M$.

By hypothesis $\tau$ contains a slow edge $(z,z')$ with both endpoints in $M$. 
We remove from $\tau$ all the edges starting from points of $M\setminus\{x\}$ and we replace them by an arborescence of $(M,E_F[M])$ rooted at $x$. We obtain a new arborescence $\tau_*\in\mathcal T$ that verifies \eqref{eqLemma1}.
\end{proof}

Consider a fast arborescence $\tau\in\mathcal T^F$. By Lemma \ref{Lemma2}, restricting $\tau$ to each minimal class $M\in\mathcal M$ we obtain an arborescence of the fast subgraph $\left(M, E_F\left[M\right]\right)$ (note that restricting to an arbitrary subgraph would produce a directed forest instead).
Moreover from the root of each sub-arborescence $\tau[M]$ it starts a single (necessarily slow) edge reaching a different class (minimal or not). The only exception is the class $M$ containing the root of $\tau$, since no edge of $\tau$ can leave this class.

We denote by $\mathcal T^F[M]$ the set of fast arborescences of $\mathcal T^F$ restricted to a class $M\in\mathcal M$.
By Lemma \ref{Lemma2}, they are simply characterized as set of arborescences of $\left(M, E_F\left[M\right]\right)$. In symbols; $\mathcal T^F[M]=\mathcal T\left[M, E_F\right]\,$.


\subsection{Lemmas about reduced weights}

We recall from Section \ref{sec: main} that $(\hat V, \hat E,\hat r_N)$ is the reduced graph, $(\mathcal M, \bar E, \bar r_N)$ is the effective graph, $\hat\pi_N$, $\bar \pi_N$ are the respective invariant measures.
We denote by $\hat{\mathcal T}$ the set of arborescences on $(\hat V, \hat E)$ and by $\hat{\mathcal T}^F$ the fast subset of $\hat{\mathcal T}$.
We denote by $\bar{\mathcal T}$ the set of arborescences of $\left(\mathcal M, \bar E\right)$.
\smallskip

The following lemma was established in \cite{BL, LX} and is one of the main motivations for the introduction of the trace process. A simple proof can be obtained by the ergodic theorem and Lemma \ref{Lemma1}. We give instead a combinatorial proof based on the results of Section \ref{sec:stardelta}.

\begin{lemma}\label{spanningreduced}
\begin{equation} \label{teoergodico}
	\hat{\pi}_N(x) \,=\, \frac{\pi_N(x)}{c_N} \quad \forall\,x\in \hat V
\end{equation}
and the normalization $c_N=\sum_{z\in \hat V}\pi_N(z)$ converges to $1$ as $N\to\infty$.
\end{lemma}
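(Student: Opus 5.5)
The plan is to combine the Markov chain tree theorem on both graphs with Theorem \ref{teo:treesrelation}. For $x\in\hat V$, apply Theorem \ref{teo:treesrelation} with $S=V\setminus\hat V$ (and rates $r_N$):
\begin{equation}
r_N(\mathcal T_x)\,=\,r_N(\mathcal F_{\hat V})\,\hat r_N(\hat{\mathcal T}_x)\,.
\end{equation}
The prefactor $r_N(\mathcal F_{\hat V})$ does not depend on $x$. Summing over $x\in\hat V$ gives
\begin{equation}
\sum_{z\in\hat V} r_N(\mathcal T_z)\,=\,r_N(\mathcal F_{\hat V})\,\hat r_N(\hat{\mathcal T})\,,
\end{equation}
where $\hat{\mathcal T}=\bigcup_{x\in\hat V}\hat{\mathcal T}_x$.

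Since $(\hat V,\hat E)$ is strongly connected, the Markov chain tree theorem (Theorem \ref{matrixtreeth}) applied to the reduced chain gives
\begin{equation}
\hat\pi_N(x)\,=\,\frac{\hat r_N(\hat{\mathcal T}_x)}{\hat r_N(\hat{\mathcal T})}\,=\,\frac{r_N(\mathcal T_x)}{\sum_{z\in\hat V}r_N(\mathcal T_z)}\,.
\end{equation}
Dividing numerator and denominator by $r_N(\mathcal T)$ and using \eqref{markovchaingeneral} for $\pi_N$ turns this into
\begin{equation}
\hat\pi_N(x)\,=\,\frac{\pi_N(x)}{\sum_{z\in\hat V}\pi_N(z)}\,=\,\frac{\pi_N(x)}{c_N}\,,
\end{equation}
which is \eqref{teoergodico}. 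This identity holds for every finite $N$, with no asymptotics involved.

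It remains to show $c_N\to1$. We have $c_N=1-\sum_{z\in V\setminus\hat V}\pi_N(z)$. Each term in this finite sum tends to $0$ by Lemma \ref{Lemma1}, so $c_N\to1$.

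The only delicate point is the applicability of Theorem \ref{teo:treesrelation}. It is stated for a generic strongly connected weighted graph with a vertex partition $V=\hat V\cup S$, with $\hat r$ given by \eqref{reduced1}, so it applies verbatim here. One also needs $r_N(\mathcal F_{\hat V})>0$ so that the division is legitimate. This holds because $(V,E)$ is strongly connected: from every vertex of $S$ there is a path to $\hat V$, and these paths yield a spanning forest rooted at $\hat V$. If $\hat V=V$, the statement is trivial.
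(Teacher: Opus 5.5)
Your proposal is correct and follows the paper's proof essentially verbatim: you apply Theorem \ref{teo:treesrelation}, sum over $x\in\hat V$, and use the Markov chain tree theorem for both chains to get $\hat\pi_N(x)=\pi_N(x)/c_N$ exactly for every finite $N$, then invoke Lemma \ref{Lemma1} for $c_N\to1$. Your extra check that $r_N(\mathcal F_{\hat V})>0$ follows from strong connectivity is a valid detail that the paper leaves implicit.
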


\begin{proof}
	By Theorem \ref{teo:treesrelation}, we have $r_N  (\mathcal{T}_x) = r_N\!\left(\mathcal{F}_{\hat V}\right)\,\hat{r}_N(\hat{\mathcal{T}}_x)$ for every $x\in \hat V$ and summing over $x\in \hat V$ we get $\sum_{z\in \hat V}r_N (\mathcal{T}_z) = r_N\!\left(\mathcal{F}_{\hat V}\right)\,\hat{r}_N(\hat{\mathcal{T}})\,$.
	Therefore from the Markov chain tree theorem we deduce:
\begin{equation}
	\hat\pi_N(x) \,=\, \frac{\hat r_N(\hat{\mathcal T}_x)}{\hat r_N(\hat{\mathcal T})} \,=\, \frac{r_N\left(\mathcal T_x\right)\,/\,r_N\left(\mathcal{F}_{\hat V}\right)}{\sum_{z\in \hat V}r_N\left(\mathcal T_z\right)\,/\,r_N\left(\mathcal{F}_{\hat V}\right)} \,=\, \frac{\pi_N(x)}{c_N}
\end{equation}
for every $x\in \hat V$ where $c_N= \frac{\sum_{z \in \hat{V}} r_N(\mathcal{T}_z)}{r_N(\mathcal{T})}\,$.
	By Lemma \ref{Lemma1}, we have that $c_N\to1$. 
\end{proof}
\smallskip


By Lemma \ref{spanningreduced}, then using formula \eqref{limF} that  also holds for the reduced rates, we obtain for every $x\in\hat V$
\begin{equation}
\label{duue}
    \lim_{N\to\infty} \pi_N(x) \,=\,
    \lim_{N\to\infty} \hat \pi_N(x) \,=\, 
    \lim_{N\to \infty} \frac{\hat r_N(\hat{\mathcal T}^F_x)}{\hat r_N(\hat{\mathcal T }^F)}\;.
\end{equation}
We need therefore to identify the fast reduced arborescences. The following lemma characterizes the fast reduced edges allowing to extend Lemma \ref{Lemma2}.

\begin{lemma}\label{fastat}
	Fast edges of the reduced graph $(\hat V, \hat E\,)$ coincide with fast edges of the original graph $(V,E)$ with both endpoints in $\hat V$, namely $\hat E_F=E_F[\hat V]\,$.
	
	Moreover, if $(x,y)\in \hat E_F$ necessarily both endpoints belong to the same minimal class $M\in\mathcal M$ and
\begin{equation} \label{eq:fastat}
	\hat r_N(x,y) \,=\, r_N(x,y) \,(1+ o_N(1))\,,\quad (x,y)\in \hat E_F
\end{equation}
where $o_N(1)$ vanishes as $N\to\infty$.	
\end{lemma}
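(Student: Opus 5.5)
The plan is to start from the decomposition \eqref{reduced},
\begin{equation*}
\hat r_N(x,y) = r_N(x,y) + \sum_{z\in V\setminus\hat V} r_N(x,z)\,P^{\hat V}_N(y\,|\,z),
\end{equation*}
and compare each of the two terms with the scale of $E_F$. The probabilities $P^{\hat V}_N$ are bounded by $1$. Hence the second term is bounded by $\sum_{z\in V\setminus\hat V} r_N(x,z)$.

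The first step is the observation behind Remark \ref{slower}, applied now to every pair in $\hat V$. Since $x\in M$ for some minimal class $M$, every edge going out from $M$ is slow. In particular, for $z\in V\setminus\hat V$ we have $z\notin M$, so $(x,z)\notin E_F$ and $r_N(x,z)/r_N(e)\to0$ for any $e\in E_F$. Therefore the correction term satisfies $\sum_{z\notin\hat V} r_N(x,z)P^{\hat V}_N(y|z)=o(r_N(e))$ for every $e\in E_F$. This holds uniformly over the finitely many pairs $(x,y)$.

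The second step splits into cases.
\begin{itemize}
\item If $(x,y)\in E_F$ with $x,y\in\hat V$, then $y$ lies in the same minimal class $M$ as $x$, since no fast edge leaves $M$. In this case $r_N(x,y)$ has the scale of $E_F$ and the correction is negligible with respect to it. This gives \eqref{eq:fastat}, and $\hat r_N(x,y)$ is of the fast velocity class of $E$.
\item If $(x,y)\notin E_F$, then $\hat r_N(x,y)$ is a sum of $r_N(x,y)$ (zero or slow) and a term that is $o(r_N(e))$. Hence $\hat r_N(x,y)/r_N(e)\to0$.
\end{itemize}

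The third step identifies $\hat E_F$. The set $E_F[\hat V]$ is nonempty, because each minimal class $M$ with $|M|\ge2$ carries fast edges; if all classes are singletons, $E_F[\hat V]$ is empty and a small separate argument is needed. When $E_F[\hat V]$ is nonempty, the maximal velocity class of $\hat E$ is precisely the set of reduced edges comparable to $E_F$, which is $E_F[\hat V]$. This uses Proposition \ref{lollodorme} to ensure that the reduced velocity classes are well defined.

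The main obstacle is the degenerate case in which every minimal class is a singleton, so that $E_F[\hat V]=\emptyset$. Here the reduced fast edges are slower than $E_F$, and the identity $\hat E_F=E_F[\hat V]$ fails as literally stated unless it is read with $\hat E_F$ meaning the reduced edges of the original fast scale. I would state and use the lemma in this sense, namely that reduced edges at the scale of $E_F$ are exactly $E_F[\hat V]$. The "moreover" part then holds as shown whenever such edges exist.
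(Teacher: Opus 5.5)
Your argument is correct, and it takes a more elementary route than the paper's. The paper works with the forest representation \eqref{reduced1}. It first observes that $\mathcal F_{\hat V}$ contains a forest made only of fast edges, so every fast forest of $\mathcal F_{\hat V}$ is all-fast. For $(x,y)\in E_F[\hat V]$, the fast forests of $\mathcal F_{\hat V\setminus\{x\}}(x\to y)$ are then those forests with $(x,y)$ appended, which gives $\hat r_N(x,y)\approx r_N(x,y)$. For $(x,y)\notin E_F$, every forest in the numerator must contain a slow edge.

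You instead use the probabilistic form \eqref{reduced}, the trivial bound $P^{\hat V}_N\le 1$, and the fact that every edge leaving a minimal class is slow (the mechanism of Remark \ref{slower}). This gives directly the explicit estimate $0\le \hat r_N(x,y)-r_N(x,y)\le \sum_{z\in V\setminus\hat V} r_N(x,z)$, whose velocity is at most that of $E_{F-1}$, and no identification of fast forests is needed. The paper's route keeps everything inside the forest calculus of Sections \ref{sec:stardelta}--\ref{sec:proof} and shows which forests dominate the reduced rates. For the lemma itself, however, your bound is all that is required.

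Your caveat about the degenerate case is also correct. The paper's proof tacitly assumes $E_F[\hat V]\neq\emptyset$ at the same point: to conclude $(x,y)\notin\hat E_F$, one needs some reduced edge at the $E_F$ scale to compare with. The example of Section \ref{k2} shows that the literal statement fails. There $\hat V=\{x,y\}$ and $E_F[\hat V]=\emptyset$, yet $\hat E_F=\{(x,y),(y,x)\}$ joins two different minimal classes, and $\hat r_N(x,y)>0=r_N(x,y)$. This does not affect Theorem \ref{ilteo}: when all minimal classes are singletons, the properties in Lemma \ref{Lemma2-red} hold trivially and \eqref{eq:fastat} is never invoked. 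So your reading of the lemma, as a statement about reduced edges at the original fast scale, is the right one.
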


\begin{proof}
	We make use of the forest representation \eqref{reduced1} for the reduced rates $\hat r_N$.
	
	First of all, we observe that by definition of $\hat V$ (support of the minimal classes of the DAG), there exists a forest of $\mathcal F_{\hat V}$ that contains only edges in $E_F$. This means that every fast forest in $\mathcal F_{\hat V}^F$ contains only edges in $E_F$. 
	
	Now let $x,y\in \hat V$ such that $(x,y)\in E_F$ and note that both endpoints belong to the same minimal class $M\in\mathcal M$.
Any forest of $\mathcal F_{\hat V\setminus\{x\}}(x\to y)$ is obtained starting from a forest of $\mathcal F_{\hat V}$ and adding a path in $V\setminus \hat V$ from $x$ to $y$. In particular starting by any element of $\mathcal F_{\hat V}^F$ and adding the fast edge $(x,y)$ we obtain a forest of $\mathcal F_{\hat V\setminus\{x\}}^F(x\to y)$ containing only fast edges (every fast forest of $\mathcal F_{\hat V\setminus\{x\}}^F(x\to y)$ is obtained in this way, since any edge $(x,z)\in E$ with $z\in V\setminus\hat V$ is necessarily slow). 
From \eqref{reduced1} it follows that $(x,y)\in \hat E_F$. 
Moreover \eqref{reduced1}, by an argument similar to \eqref{limF}, gives
\begin{equation}
	\hat r_N(x,y) \,=\, \frac{r_N\left(\mathcal F_{\hat V\setminus\{x\}}^F(x\to y)\right)}{r_N\big(\mathcal F_{\hat V}^F\big)}\,(1+ o_N(1)) \,=\, r_N(x,y)\,(1+ o_N(1)) \,.
\end{equation}
	
	On the other hand, consider $x,y\in \hat V$ such that $(x,y)\notin E_F$. Every edge $(x,z)\in E$ with $z\in V\setminus\hat V$ is slow. Therefore every forest of $\mathcal F_{\hat V\setminus\{x\}}(x\to y)$ must contain an edge of $E\setminus E_F$. From \eqref{reduced1} it follows that $(x,y)\notin \hat E_F$.
\end{proof}

From the fact that $\hat E_F$ is just $E_F[\hat V]$ we deduce the following lemma analogous to Lemma \ref{Lemma2}, hence we do not discuss the details.

\begin{lemma} \label{Lemma2-red}
 Let $\hat \tau\in \hat{\mathcal T}^F$. Let $M\in \mathcal M$ be a minimal class. The following properties are satisfied:
\begin{itemize}	
\item[i)] $\hat\tau$ contains no edges going out from $M$, if $\hat \tau$ is rooted inside $M$;
\item[ii)] $\hat\tau$ contains only one edge (necessarily slow) going out from $M$, if $\hat \tau$ is not rooted inside $M$;
\item[iii)] $\hat \tau$ contains no slow edges connecting two vertices of $M$.
\end{itemize}
\end{lemma}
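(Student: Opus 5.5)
The plan is to transport the proof of Lemma \ref{Lemma2} to the reduced graph $(\hat V,\hat E,\hat r_N)$. We argue by contraposition: if $\hat\tau\in\hat{\mathcal T}$ violates i), ii) or iii), we build another arborescence $\hat\tau_*\in\hat{\mathcal T}$ with
\begin{equation}
\lim_{N\to\infty}\frac{\hat r_N(\hat\tau)}{\hat r_N(\hat\tau_*)}=0\,.
\end{equation}
Hence $\hat\tau\notin\hat{\mathcal T}^F$. All limits exist by Proposition \ref{lollodorme}, since the reduced rates satisfy Assumption \ref{FRL2}.

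The needed structure comes from Lemma \ref{fastat}, whose facts play the role of the corresponding facts for $(V,E)$.
\begin{itemize}
\item[(a)] The fast reduced edges are $\hat E_F=E_F[\hat V]$, and each of them joins two points of the same minimal class.
\item[(b)] Any reduced edge leaving a class $M\in\mathcal M$ is not in $\hat E_F$. It is therefore strictly slower than every edge of $E_F[M]$, in the sense of the velocity order of $\hat E$; Remark \ref{slower} gives the same conclusion.
\item[(c)] For every $(x,y)\in E_F[M]$ we have $\hat r_N(x,y)=r_N(x,y)(1+o_N(1))$, and these edges lie in $\hat E$.
\end{itemize}
By (a) and (c), $(M,\hat E_F[M])=(M,E_F[M])$ is strongly connected and consists only of fast reduced edges. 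Moreover, a slow reduced edge with both endpoints in $M$ is strictly slower than any edge of $\hat E_F[M]$. One point to check: a reduced edge can be nonzero even if it is absent in $E$, but it is then slow by Lemma \ref{fastat}, so this causes no trouble.

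The three cases are handled as follows.
\begin{itemize}
\item[i)] If $\hat\tau$ is rooted at $x\in M$ and contains an edge leaving $M$, remove all edges of $\hat\tau$ starting in $M$. Replace them with an arborescence of $(M,E_F[M])$ rooted at $x$.
\item[ii)] If $\hat\tau$ is not rooted in $M$ and contains two edges leaving $M$, keep one of them, say $(z,y)$. Replace the other edges starting in $M$ with an arborescence of $(M,E_F[M])$ rooted at $z$.
\item[iii)] If $\hat\tau$ contains a slow edge inside $M$, and we are not already in i) or ii), let $x$ be either the root of $\hat\tau$ in $M$ or the tail of the unique edge leaving $M$. Replace the edges starting from $M\setminus\{x\}$ with a fast arborescence of $M$ rooted at $x$.
\end{itemize}

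In each case we must check that $\hat\tau_*$ is an arborescence. Every vertex outside $M$ still follows its old path until it enters $M$; inside $M$ it follows the new fast arborescence to $x$ (resp.\ $z$), and from there it leaves via the kept edge, or stops because $x$ is the root. The number of edges of $\hat\tau_*$ is unchanged. At least one removed edge is slow, and all added edges are in $\hat E_F$. By (b) and the total order of velocity classes of reduced edges, the ratio of weights therefore tends to $0$.

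The main obstacle is this last comparison. We need the fact that one slow edge, together with the other removed edges, is asymptotically negligible relative to fast edges taken in equal number. This follows because the removed edges that are not slow are themselves fast and comparable to the added fast edges, which is the content of Assumption \ref{FRL2} applied to $\hat r_N$ (Proposition \ref{lollodorme}). The rest is the same bookkeeping as in Lemma \ref{Lemma2}.
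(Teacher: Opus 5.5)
Your strategy is the paper's: the paper proves this lemma only by remarking that, once $\hat E_F=E_F[\hat V]$ is known, the argument of Lemma \ref{Lemma2} carries over. Your transport of that argument, including the pairwise rate comparison, is sound. However, one step fails as written, in case ii).

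You keep an \emph{arbitrary} exit edge $(z,y)$ and redirect all of $M$ to $z$. Your verification then says that a path reaching $z$ ``leaves via the kept edge'' and continues on its old route. But the old $\hat\tau$-path from $y$ may re-enter $M$. For instance, let $M=\{z,z'\}$ and suppose $\hat\tau$ contains $z\to y\to z'\to y'$ with $y,y'\notin M$. Keeping $(z,y)$ and replacing $(z',y')$ by the fast edge $(z',z)$ creates the cycle $z\to y\to z'\to z$, so $\hat\tau_*$ is not an arborescence.

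The repair is to keep a \emph{last exit}. Choose $z\in M$ whose $\hat\tau$-path to the root meets $M$ only at $z$; for example, take the last vertex of $M$ on the path from any point of $M$ to the root. Then keep its out-edge $(z,y)$. With this choice:
\begin{itemize}
\item[-] the path from $y$ to the root uses only retained edges, so every vertex reaches the root;
\item[-] at least one other exit edge is still removed;
\item[-] the rest of your argument goes through.
\end{itemize}
In the example above, keeping $(z',y')$ gives the arborescence $\{(z,z'),(y,z'),(z',y')\}$.

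Cases i) and iii) are unaffected. In iii), with a unique exit edge $(x,y)$, the path from $y$ cannot return to $M$: it would have to leave $M$ again through $(x,y)$, closing a cycle. The paper's proof of Lemma \ref{Lemma2} ii) has the same imprecision, so you inherited it.

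A smaller point: your (a)--(b) rest on Lemma \ref{fastat}, which tacitly needs $E_F[\hat V]\neq\emptyset$. If all minimal classes are singletons, as in Subsection \ref{k2}, this fails. There $\hat V=\{x,y\}$ and $E_F[\hat V]=\emptyset$, while $\hat E_F=\{(x,y),(y,x)\}$ consists of edges between different classes, so (b) is false. Your constructions are only triggered when $|M|\geq 2$, where (a)--(c) do hold, so i)--iii) are fine. But in the degenerate case the word ``slow'' in ii) must be read as ``slower than $E_F$'', which is what Remark \ref{slower} provides.
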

\smallskip

\subsection{Proof of Theorem \ref{ilteo}}

First of all, Lemma \ref{Lemma1} guarantees that $\lim_{N\to\infty}\pi_N(x)$ is zero for every $x\in V\setminus \hat V$.

Now let $x\in\hat V$ and compute $\lim_{N\to\infty}\pi_N(x)$ via formula \eqref{duue}. Let $M\in\mathcal M$ be the minimal class containing $x$. Then, using Lemma \ref{Lemma2-red} for the characterization of fast reduced arborescences and identity \eqref{eq:fastat} for fast reduced rates, we may express the numerator of \eqref{duue} as: 
\begin{equation} \label{quuattro}
	\hat{r}_N\big(\hat{\mathcal T}_x^F\big) \,\approx\, r_N\!\left(\mathcal T_x^F[M]\right) \sum_{\bar{\tau} \in \bar {\mathcal T}_M} \prod_{(M_i,M_j) \in \bar{\tau}}\, \sum_{y_i \in M_i,\, y_j \in M_j } \hat{r}_N(y_i,y_j)\;r_N\!\left(\mathcal{T}_{y_i}^F[M_i]\right)
\end{equation}
where we recall that $\mathcal T^F_x[M]$ coincides with the set of arborescences of $(M,E_F[M])$ rooted at $x$, while $\bar{\mathcal T}_{M}$ denotes the set of arborescences of the effective graph $(\mathcal M,\bar E)$ rooted at $M$. The symbol $a_N\approx b_N$ is a shorthand for $a_N = b_N\,(1+o_N(1))$ with $o_N(1)$ vanishing as $N\to\infty$.
\smallskip

Let $\mu_M^N$ denote the invariant measure of $(M,E_F[M])$. Given $M_i,M_j\in \mathcal M$, by definition of effective rates \eqref{ratesefficacissimi} and by the Markov chain tree theorem for $\mu_M^N$, we have
\begin{equation} \label{hatbar}
    \bar r_N\left(M_i,M_j\right)\; r_N\left(\mathcal{T}^F\!\left[M_i\right]\right) \,= \sum_{y_i \in M_i,\, y_j \in M_j} \hat{r}_N(y_i,y_j)\; r_N\!\left(\mathcal{T}^F_{y_i}[M_i\right])\,.
\end{equation}
Using \eqref{hatbar} we rewrite \eqref{quuattro} as:
\begin{equation}
	\hat{r}_N\big(\hat{\mathcal T}_x^F\big) \,\approx\, r_N\!\left(\mathcal T_x^F[M]\right) \,\sum_{\bar{\tau} \in \bar{\mathcal T}_{M}} \prod_{(M_i,M_j) \in \bar{\tau}}\!\Big(\bar r_N\left(M_i,M_j\right)\; r_N\!\left(\mathcal{T}^F[M_i]\right) \Big) \,.
\end{equation}
In each arborescence $\bar{\tau} \in \bar{\mathcal T}_{M}$ there is exactly one effective edge going out from every class $M'\in\mathcal M$, $M'\neq M$. Thus the product of weights $r_N\!\left(\mathcal{T}^F[M_i]\right)$ goes out of the sum and the remaining sum is simply $\bar r_N\!\left(\bar{\mathcal T}_{M}\right)$, hence we obtain
\begin{equation}
\begin{split}
  	\hat{r}_N\big(\hat{\mathcal T}_x^F\big)  \,\approx\, r_N\!\left(\mathcal T_x^F\left[M\right]\right)\; \bar r_N\!\left(\bar{\mathcal T}_{M}\right) \prod_{M'\in\mathcal M\setminus\{M\}} r_N\!\left(\mathcal{T}^F[M']\right) \,.
\end{split}
\end{equation}
We can multiply and divide by $r_N\big(\mathcal T^F[M]\big)$ and using the Markov chain tree theorem for $\mu_M^N$ we find
\begin{equation} \label{inpiu}
\begin{split}
  	\hat{r}_N\big(\hat{\mathcal T}_x^F\big)  \,\approx\, \mu_M^N(x)\,\; \bar r_N\!\left(\bar{\mathcal T}_{M}\right) \prod_{M'\in\mathcal M} r_N\!\left(\mathcal{T}^F[M']\right) \;.
\end{split}
\end{equation}
\smallskip

By applying the same steps also to the denominator of \eqref{duue}, since $\hat V$ is the union of all minimal classes $M''$ and $\sum_{z\in M''}\,\mu_{M''}^N(z)$ equals $1$ for each $M''$, we obtain:
\begin{equation} \label{inpiu_den}
	\hat{r}_N\big(\hat{\mathcal T}^F\big)\,\approx\, \sum_{M''\in\mathcal M}\,
	\bar r_N\!\left(\bar{\mathcal T}_{M''}\right) \prod_{M'\in\mathcal M} r_N\!\left(\mathcal{T}^F[M']\right) \;.
\end{equation}
Taking the ratio of \eqref{inpiu}, \eqref{inpiu_den} the product over $M'$ cancels and we get
\begin{equation}
    \frac{\hat{r}_N\big(\hat{\mathcal T}_x^F\big)}{\hat{r}_N\big(\hat{\mathcal T}^F\big)} \,\approx\, \frac{\mu_M^N(x)\; \bar r_N\!\left(\bar{\mathcal T}_{M}\right)}{\sum_{M''\in\mathcal M}\bar r_N\!\left(\bar{\mathcal T}_{M''}\right)} \,=\,
    \mu_M^N(x)\,\; \bar\pi_N(M)
\end{equation}
using the Markov chain tree theorem for $\bar \pi_N$ for the latter equality.
Finally, taking the limits on both sides (that exist by Assumption \ref{FRL2} and Propositions \ref{existence}, \ref{lollodorme}, \ref{iodormo}) and using equation \eqref{duue}, we obtain $\pi(x) \,=\, \mu_M(x)\; \bar\pi(M)\,$. $\qed$
\smallskip

\section{Examples} \label{sec:examples}

We discuss simple examples of application of Theorems \ref{ilteo}, \ref{ilteo2}. In some cases we also discuss explicit computation of the arborescence weights.

\subsection{The case with $1$ scale}
Let $(V,E,r_N)$ be strongly connected graph such that the DAG constituted by the equivalence classes of the fast subgraph $(V,E_F)$ has a unique minimal class $M\in\mathcal M$. 
The iteration of Section \ref{sec: iteration} stops at level $k=1$.
\smallskip

By Proposition \ref{Unicomassimo} there exists an arborescence with all fast edges. Arborescences of this type can only be rooted at vertices of $M$ and are the fast arborescences in $\mathcal T^F$. 
By Lemma \ref{Lemma2}, arborescences in $\mathcal T^F$ restricted to $\left(M, E_F[M]\right)$ are precisely the arborescences of this subgraph. Then:
\begin{equation}
	r_N\!\left(\mathcal T^F\right) \,=\, r_N\!\left(\mathcal T^F[M]\right)\, r_N\!\left(\mathcal F_M^F\right) \,,
\end{equation}
where $\mathcal F_M^F$ is the set of fast spanning forests with roots in $M$.
We compute the limiting invariant measure $\pi$ using the Markov chain tree theorem considering only fast arborescences (Proposition \ref{existence}).
For $x\in V\setminus M$ Lemma \ref{Lemma1} gives $\lim_{N\to\infty} \pi_N(x)=0$, while for $x\in M$ we obtain
\begin{equation}
	\pi_N(x) \,\approx\, \frac{r_N\!\left(\mathcal T_x^F\right)}{r_N\!\left(\mathcal T^F\right)} \,=\, \frac{r_N\!\left(\mathcal T_x^F[M]\right)\,r_N\!\left(\mathcal F_M^F\right)}{r_N\!\left(\mathcal T^F[M]\right)\,r_N\!\left(\mathcal F_M^F\right)} \,=\,\frac{r_N\!\left(\mathcal T_x^F[M]\right)}{r_N\!\left(\mathcal T^F[M]\right)} \,=\, \mu_{M}^N(x)
\end{equation}
hence $\pi(x)=\mu_M(x)$ by taking $N\to\infty$.
\smallskip

\subsection{A simple example with $2$ scales}\label{k2}
A simple example where instead $k=2$ is the following.
We consider the Markov chain associated with the graph $(V,E)$ in Figure \ref{exxample}. The vertex set is $V=\{x,y,z\}$ and the edge set has two velocity classes. More precisely, the rates are defined as follows: 
\begin{equation}
r_N(z,x) =c(z,x)\,,\ \ 
r_N(z,y) = c(z,y)\,,\ \ 
r_N(y,x) = \frac{c(y,x)}{N}\,,\ \ 
r_N(x,z) = \frac{c(x,z)}{N}
\end{equation}
for a function $c: E\rightarrow (0,\infty)\,$ independent of $N$,
so that the fast rates are $O(1)$, while the slow ones are $O(1/N)$. 
Notice that $(V,E)$ is strongly connected but there are no arborescences composed only by fast edges.
The graph is so simple that the Markov chain tree theorem can be applied with explicit computations.
We have
\begin{equation}
\left\{
\begin{array}{l}
r_N\left(\mathcal T_x\right) \,=\, \frac{1}{N}\, c(y,x)\,\big(c(z,x)+c(z,y)\big) \\[2pt]
r_N\left(\mathcal T_y\right) \,=\, \frac{1}{N}\, c(x,z)\,c(z,y) \\[2pt]
r_N\left(\mathcal T_z\right) \,=\, \frac{1}{N^2}\, c(y,x)\,c(x,z)
\end{array}
\right.
\end{equation}
Then
\begin{equation} \label{uff2}
    \pi_N(x) \,=\, \frac{r_N(\mathcal{T}_x)}{r_N(\mathcal{T}_x)+r_N(\mathcal{T}_y)+r_N(\mathcal{T}_z)} \,\rightarrow\, \frac{c(y,x)\,\big(c(z,x)+c(z,y)\big)}{c(y,x)\,\big(c(z,x)+c(z,y)\big) +  c(x,z)\,c(z,y) } \;
\end{equation}
as $N\to\infty$.
Similar computations can be done for $\pi(y)$ and $\pi(z)$ obtaining
\begin{equation}\label{uff3}
 \pi(y) = \frac{c(x,z)\,c(z,y)}{c(y,x)\,\big(c(z,x)+c(z,y)\big) +  c(x,z)\,c(z,y) } \;,\quad \pi(z)=0 \;.
\end{equation}

\begin{figure}
\centering
\xygraph{
!{<0cm,0cm>;<1cm,0cm>:<0cm,1cm>::}
!{(0,0) }*+{\bullet_{x}}="x"
!{(3,0) }*+{\bullet_{y}}="y"
!{(1.5,-2) }*+{\bullet_{z}}="z"
!{(4,0) }*+{\bullet_{x}}="a"
!{(7,0) }*+{\bullet_{y}}="b"
!{(5.5,-2) }*+{\bullet_{z}}="c"
!{(8,0) }*+{\bullet_{x}}="p"
!{(11,0) }*+{\bullet_{y}}="q"
"y":@{~>}"x"
"x":@/_/@{~>}"z"
"z":"x"
"z":"y"
"c":"a"
"c":"b"
"p":@/_/@{~>}"q"
"q":@/_/@{~>}"p"
}
\caption{From left to right: the strongly connected graph $(V,E)$, the fast subgraph $(V,E_F)$ coinciding with the DAG, and the reduced graph $(\hat V, \hat E)$ coinciding with the effective graph. Fast edges are represented by straight lines, slow edges by wavy lines.}
\label{exxample}
\end{figure}

From Figure \ref{exxample}, the equivalence classes of the fast graph $(V, E_F)$ consist of single nodes $\mathcal Q=\left\{\{x\},\{y\},\{z\}\right\}$ and the minimal classes are $\mathcal M=\left\{\{x\},\{y\}\right\}$. 
So $\hat V=\left\{x,y\right\}$ and the reduced rates can be computed directly:
\begin{equation}\label{uff}
\left\{
\begin{array}{l}
    \hat{r}_N(x,y) \,=\, r_N(x,z)\,P_N^{\{x,y\}}(y|z) \,=\, \frac{1}{N}\, c(x,z)\; \frac{c(z,y)}{c(z,x)+c(z,y)} \\[6pt]
    \hat{r}_N(y,x) \,=\, r_N(y,x) \,=\, \frac{1}{N}\,c(y,x)
\end{array}
\right.\;.
\end{equation}
Of course $\mu_{\{x\}}^N$ and $\mu_{\{y\}}^N$ are delta measures concentrated at $x$ and $y$ respectively.
Then the effective rates coincide with the reduced ones, precisely $\bar{r}_N\left(\{x\},\{y\}\right)=\hat{r}_N(x,y)$ and $\bar{r}_N\left(\{y\},\{x\}\right)=\hat{r}_N(y,x)$.
Both effective rates have the same scale, hence the invariant measure $\bar \pi_N$ is independent of $N$ and can be computed from \eqref{uff}:
\begin{equation}
	\bar\pi(\{x\}) \,=\, \frac{\hat r_N(y,x)}{\hat r_N(x,y)+ \hat r_N(y,x)} \,=\, \frac{ c(y,x)\,\big(c(z,x)+c(z,y)\big)}{ c(y,x)\,\big(c(z,x)+c(z,y)\big)\,+\, c(x,z)\,c(z,y)}
\end{equation}
and $\bar\pi(\{y\})=1-\bar\pi(x)\,$.
Theorem \ref{ilteo} states $\pi \,=\, \bar \pi(\{x\})\,\delta_{\{x\}} + \pi(\{y\})\, \delta_{\{y\}}$, which coincides with the result obtained by the direct computation \eqref{uff2}-\eqref{uff3}.
\smallskip

Notice that since the effective rates all have the same scale, the graph $\left(\mathcal M, \bar{E}_F\right)$ coincides with $\left(\mathcal M, \bar{E}\right)$ (strongly connected) and at the next iteration step $\mathcal M^{(2)}$ contains a single element $M^{(2)}=\{\{x\}, \{y\}\}$.
The probability measure $\bar\pi$ coincides with $\mu_{M^{(2)}}^{(1)}$. See Figure \ref{bohboh} for the corresponding forest representation of the limiting invariant measure $\pi$.


\begin{figure}
	\begin{tikzpicture}[
		scale=0.7, transform shape,
		level distance=2.2cm,
		dot/.style={circle, fill=black, minimum size=6pt, inner sep=0pt},
		edge from parent/.style={draw, thick},
		edge label/.style={midway, fill=white, inner sep=1.5pt, font=\small}
		]
		
		\begin{scope}[xshift=-11cm]
			\node at (0,0) {$\mathcal{M}^{(2)}$};
			\node at (0,2.2) {$\mathcal{M}$};
			\node at (0,4.4) {$V$};
			\node[dot, label=left:$z$] at (12,4.4) {};
			\node at (3,5) {$\pi(x)=\bar\pi(\{x\})$};
			\node at (9,5) {$\pi(y)=\bar\pi(\{y\})$};
			\node at (12.1,5) {$\pi(z)=0$};
		\end{scope}
		
		\node[dot , label=below:{$\{\{x\},\{y\}\}$}] (root) at (-5,0) {}
		child[grow=up, xshift=-3cm] {
			node[dot, label=left:$\{x\}$] {} 
			child {
				node[dot, label=left:$x$] {} 
				edge from parent node[edge label] {1}
			}
			edge from parent node[edge label, left] {$\bar\pi(\{x\})$}
		}
		child[grow=up, xshift=3cm] {
			node[dot, label=right:$\{y\}$] {}
			child {
				node[dot, label=left:$y$] {} 
				edge from parent node[edge label] {1}
			}
			edge from parent node[edge label, right] {$\bar\pi(\{y\})$}
		};
	\end{tikzpicture}
	\caption{Forest associated to the $2$-scale dynamics of Figure \ref{exxample}. 
	}
\label{bohboh}
\end{figure}
\smallskip

\subsection{A simple example with $k$ scales}
Consider a continuous-time random walk on the one-dimensional segment of length $k+1$, given by $V=\{0,1,\dots,k\}$, $E=\{(x,x+1)\}_{x=0,\dots,k-1}\,$. Choose the following symmetric transition rates:
\begin{equation}
	r_N(x,x+1) \,=\, r_N(x+1,x) \,:=\,  e^{-x\,N}\,,\quad x=0,\dots,k-1\,.
\end{equation}
\begin{figure}[h]
	\centering
	\mbox{ \xygraph{
			!{<0cm,0cm>;<1cm,0cm>:<0cm,1cm>::}
			!{(0,0) }*+{\bullet_{0}}="a"
			!{(2,0) }*+{\bullet_{1}}="b"
			!{(4,0) }*+{\bullet_{2}}="c"
			!{(6,0) }*+{\bullet_{3}}="d"
			"a":@/^0.4cm/^{1}"b"
			"b":@/^0.4cm/^{1}"a"
			"b":@/^0.4cm/^{{e^{-N}}}"c"
			"c":@/^0.4cm/^{{e^{-N}}}"b"
			"c":@/^0.4cm/^{{e^{-2N}}}"d"
			"d":@/^0.4cm/^{{e^{-2N}}}"c"
		}
	}
	\caption{Random walk on a one-dimentional lattice, $k=3$.}
		\label{trota}
\end{figure}
The walk goes more likely to the left, but the more it goes to the right the slower it becomes. Since the rates are symmetric, every rooted arborescence $\tau\in\mathcal T$ has the same weight $e^{-\frac{k(k-1)}{2}\,N}$, hence the invariant measure is independent of $N$ and uniform: $\pi_N(x)=\frac{1}{k+1}$ for all $x=0,\dots, k$.
\smallskip

At the first step of iteration described in Sections \ref{sec: main}-\ref{sec: iteration}, the only fast edges are $(0,1),\,(1,0)$ and the equivalence classes with respect to fast communication are $\mathcal Q^{(1)}=\mathcal M^{(1)}=\{\{0,1\},\{2\},\dots,\{k\}\}$. All of them are local minima of the DAG. In particular, we call $M^{(1)}:=\{0,1\}$ the left-most class. By symmetry, the invariant measure of the fast chain restricted to $M^{(1)}$ is independent of $N$ and given by $\mu_{M^{(1)}}^{(1)}(0)=\mu_{M^{(1)}}^{(1)}(1)=\frac{1}{2}\,$.
The effective rates are:
\begin{equation}\begin{cases}
\;\bar r_N^{(1)}\big(M^{(1)},\,\{2\}\big) \,=\, \mu_{M^{(1)}}^{(1)}(1)\;r_N(1,2) \,=\,\frac{1}{2}\,e^{-N}\\[4pt]
\;\bar r_N^{(1)}\big(\{2\},\,M^{(1)}\big) \,=\, \mu_{\{2\}}^{(1)}(2)\;r_N(2,1) \,=\, e^{-N}
\end{cases} \,. \end{equation}
%
At every iteration step $2\leq i\leq k$, the fast edges are the left-most ones 
and the equivalence classes with respect to fast communication are $\mathcal Q^{(i)}=\mathcal M^{(i)}=\{\{M^{(i-1)},\{i\}\},\{i+1\},\dots,\{k\}\}$. All of them are local minima of the DAG. In particular we call $M^{(i)}:=\{M^{(i-1)},\{i\}\}$ the left-most class, with support $\{0,1,\dots,i\}$. The invariant measure of the chain restricted to $M^{(i)}$ is given by
\begin{equation}\label{recmeas}
	\mu_{M^{(i)}}^{(i)}\big(\{i\}\big) \,=\, \frac{\bar r_N^{(i-1)}\big(M^{(i-1)},\{i\}\big)}{\bar r_N^{(i-1)}\big(M^{(i-1)},\{i\}\big)+\bar r_N^{(i-1)}\big(\{i\},M^{(i-1)}\big)}
\end{equation}
and $\mu_{M^{(i)}}^{(i)}\big(M^{(i-1)}\big)\,=\, 1- \mu_{M^{(i)}}^{(i)}\big(\{i\}\big)\,$; then the new effective rates are:
\begin{equation}\label{recrates} \begin{cases}
	\;\bar r_N^{(i)}\big(M^{(i)},\,\{i+1\}\big) \,=\, \mu_{M^{(i)}}^{(i)}\big(\{i\}\big)\;r_N(i,i+1) \,=\, \mu_{M^{(i)}}^{(i)}\big(\{i\}\big)\,e^{-i\,N}\\[4pt]
	\;\bar r_N^{(i)}\big(\{i+1\},\,M^{(i)}\big) \,= \,\mu_{\{i+1\}}^{(i)}(i+1)\;r_N(i+1,i) \,=\, e^{-i\,N}
\end{cases} \end{equation}
(if $i\leq k-1$). 
Applying relations \eqref{recmeas}, \eqref{recrates} recursively we find
\begin{equation}
	\mu_{M^{(i)}}^{(i)}\big(\{i\}\big) \,=\, \frac{1}{i+1}\,,\quad \mu_{M^{(i)}}^{(i)}\big(M^{(i-1)}\big) \,=\, \frac{i}{i+1}\,,\quad i=1,\dots,k\,.
\end{equation}
Therefore Theorem \ref{ilteo2} gives the following formula, represented in Figure \ref{forestrota}, for the limiting invariant measure of the whole chain:
\begin{equation}
	\pi(x) \,=\, \Big(\prod_{j=0}^{x-1}\mu_{\{j\}}^{(j)}(j)\Big)\;\  \mu_{M^{(x)}}^{(x)}\big(\{x\}\big)\;\ \Big(\prod_{i=x+1}^k \mu_{M^{(i)}}^{(i)}\big(M^{(i-1)}\big)\Big) \,=\, \frac{1}{k+1}
\end{equation}
(since the first $x-1$ terms in the product are $1$ and the other terms simplify), which is coherent with the explicit computation.
\begin{figure}
\centering
\begin{tikzpicture}[
    scale=0.68,
    transform shape,
    dot/.style={circle,fill=black,minimum size=6pt,inner sep=0pt},
    edge/.style={draw,thick},
    edge label/.style={midway,fill=white,inner sep=1pt,font=\small}
]
\def\yA{0}
\def\yB{2.2}
\def\yC{4.4}
\def\yD{6.6}

\coordinate (v0) at (-3,\yD);
\coordinate (v1) at (0,\yD);
\coordinate (v2) at ( 3,\yD);
\coordinate (v3) at ( 6,\yD);
\coordinate (m1) at (-1.5,\yC);      
\coordinate (s2) at ( 3,\yC);
\coordinate (s3a) at ( 6,\yC);
\coordinate (m2) at (0.75,\yB);       
\coordinate (s3b) at (6,\yB);
\coordinate (root) at (3.375,\yA);     

\node at (-5,\yA) {$\mathcal M^{(3)}$};
\node at (-5,\yB) {$\mathcal M^{(2)}$};
\node at (-5,\yC) {$\mathcal M^{(1)}$};
\node at (-5,\yD) {$V$};

\node at (-3,\yD+0.5) {$\frac14$};
\node at (0,\yD+0.5) {$\frac14$};
\node at ( 3,\yD+0.5) {$\frac14$};
\node at ( 6,\yD+0.5) {$\frac14$};

\node[dot,label=below:$M^{(3)}$] (R) at (root) {};

\node[dot,label=left:$M^{(2)}$] (M2) at (m2) {};
\node[dot,label=right:$\{3\}$] (S3b) at (s3b) {};

\node[dot,label=left:$M^{(1)}$] (M1) at (m1) {};
\node[dot,label=right:$\{2\}$] (S2) at (s2) {};
\node[dot,label=right:$\{3\}$] (S3a) at (s3a) {};

\node[dot,label=left:$0$] (V0) at (v0) {};
\node[dot,label=left:$1$] (V1) at (v1) {};
\node[dot,label=left:$2$] (V2) at (v2) {};
\node[dot,label=left:$3$] (V3) at (v3) {};

\draw[edge] (R)--(M2)
    node[edge label,left]{$\frac34$};

\draw[edge] (R)--(S3b)
    node[edge label,right]{$\frac14$};

\draw[edge] (M2)--(M1)
    node[edge label]{$\frac23$};

\draw[edge] (M2)--(S2)
    node[edge label]{$\frac13$};

\draw[edge] (M1)--(V0)
    node[edge label]{$\frac12$};

\draw[edge] (M1)--(V1)
    node[edge label]{$\frac12$};

\draw[edge] (S2)--(V2)
    node[edge label]{$1$};

\draw[edge] (S3b)--(S3a)
    node[edge label]{$1$};

\draw[edge] (S3a)--(V3)
    node[edge label]{$1$};

\end{tikzpicture}
\caption{Forest associated to the $3$-scale dynamics of Figure \ref{trota}.}
\label{forestrota}
\end{figure}
\smallskip

\subsection{Product state space with 2 scales}
We discuss a class of models that, in special cases, can be imagined as discrete counterparts of coupled diffusions evolving on different time scales as for example in \cite{EP,AN,CCKM,AMM}. 
\smallskip

Consider a finite vertex set of the form $V= X\times Y\,$: the possible states are the points $(x,y)\in V$.
Given a parameter $N>0$, transition rates are defined as follows
\begin{equation}\label{diffr}
	\,r_N\big((x,y),\,(x',y')\big) := \begin{cases}
	\;r^y(x,x')  & \textrm{if }x\neq x'\textrm{ and }y=y' \\[4pt]
	\;\frac{1}{N}\,r_x(y,y') & \textrm{if }x=x'\textrm{ and } y\neq y'
\end{cases}
\end{equation}
where $r^y:X\times X\to[0,\infty)$, $r_x:Y\times Y\to[0,\infty)$ are given and independent of $N$.
Coordinates $x,y$ change one at a time, for large $N$ the transitions in the $x$-direction are fast and the transitions in the $y$-direction are slow.
The edge set is
\begin{equation}
E:=\left\{\left((x,y),(x',y')\right)\in V\times V\,:\, \left(y=y',\,(x,x')\in E^y\right) \textrm{ or } \left(x=x',\,(y,y')\in E_x\right)  \right\} 
\end{equation}
where $E^y:=\{(x,x')\in X\times X \,:\, r^y(x,x')>0\}$ and $E_x:=\{(y,y')\in Y\times Y \,:\, r_x(y,y')>0\}$.
We assume that each graph $(X,E^y)$, $(Y,E_x)$ is strongly connected and we call $\mu_y$, $\nu_x$ the respective invariant measures (that do not depend on $N$). In particular the full graph $(V,E)$ is strongly connected and the continuous-time Markov chain with transition rates $r_N$ has a unique invariant measure $\pi_N$.
\smallskip

Fast communication partitions the vertex set in the equivalence classes $\mathcal M=\{M_y\}_{y\in Y}$, where $M_y:=X\times\{y\}$. They are all local minima of the DAG.
Since their support coincide with the whole vertex set $V$, the reduced rates $\hat r_N$ coincide with $r_N$. 
The effective rates are
\begin{equation} \label{eq:weff}
	\bar r_N\left(M_y,M_{y'}\right) = \frac 1N\sum_{x\in X} \,\mu_y(x)\; r_x(y,y') \;,\quad y,y'\in Y
\end{equation}
and since they all have the same scale, we renormalize them multiplying by $N$:
\begin{equation} \label{eq:weff2}
	\bar r\left(M_y,M_{y'}\right) := \sum_{x\in X} \,\mu_y(x)\; r_x(y,y')\,.
\end{equation}
The associated invariant measure $\bar\pi$ remains the same, independent of $N$.
No more iterations are needed (at level $k=2$, $\mathcal M^{(2)}$ has a single element) and by Theorem \ref{ilteo} we obtain
\begin{equation} \label{eq:limiteXY}
 \lim_{N\to\infty} \pi_N(x,y) \,=\, \bar\pi\left(M_y\right)\, \mu_y(x)\;, \quad (x,y)\in X\times Y \,.
\end{equation}
%
Now, assume that fixing $y\in Y$ or $x\in X$ the transition rates are reversible:
\begin{equation} \label{eq:xdetail}
	\mu_y(x)\ r^y(x,x') \,=\, \mu_y(x')\  r^y(x',x)\;,\quad x,x'\in X \,;
\end{equation}
\begin{equation} \label{eq:ydetail}
	\nu_x(y)\ r_x(y,y') \,=\, \nu_x(y')\  r_x(y',y)\;, \quad y,y'\in Y\,.
\end{equation}
Note that the global dynamics \eqref{diffr} and the effective dynamics \eqref{eq:weff} are in general not reversible.
To compare with the nice continuous framework \cite{ACG}, we look for conditions to guarantee \textit{reversibility of the effective dynamics}: this is the case in which the effective invariant measure $\bar\pi$ can be usually computed.

\begin{proposition} \label{prop:davide}
Assume reversibility $\eqref{eq:ydetail}$ of the rates $r_x$.
Suppose there exist two functions $a:X\to(0,\infty)$, $b:Y\to(0,\infty)$ such that
\begin{equation} \label{eq:alphabeta}
	a(x)\; \nu_x(y) \;=\; b(y)\; \mu_y(x)\,,\quad (x,y)\in V \,.
\end{equation}
Then the effective dynamics \eqref{eq:weff2} is reversible with respect to its unique invariant measure $\bar\pi$, given by
\begin{equation} \label{eq:pibeta}
\bar\pi\left(M_y\right) \,=\, \frac{1}{Z}\,b(y)\,,\quad y\in Y 
\end{equation}
where $Z=\sum_{y'\in Y}b(y')$.
\end{proposition}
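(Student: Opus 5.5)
We verify detailed balance for the candidate measure $b(y)/Z$ directly with respect to the renormalized effective rates \eqref{eq:weff2}. Reversibility then yields stationarity. The effective graph $(\mathcal M,\bar E)$ is strongly connected, as noted after Definition \ref{defeff}, so the invariant measure is unique and therefore equals $\bar\pi$.

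The key step is to rewrite $\mu_y(x)$ using \eqref{eq:alphabeta}: $\mu_y(x)=a(x)\,\nu_x(y)/b(y)$. For $y\neq y'$ this gives
\begin{equation}
b(y)\,\bar r\left(M_y,M_{y'}\right)=\sum_{x\in X} b(y)\,\mu_y(x)\,r_x(y,y')=\sum_{x\in X} a(x)\,\nu_x(y)\,r_x(y,y').
\end{equation}
We then apply the reversibility \eqref{eq:ydetail} of $r_x$ termwise, replacing $\nu_x(y)\,r_x(y,y')$ by $\nu_x(y')\,r_x(y',y)$. Finally we use \eqref{eq:alphabeta} backwards, $a(x)\,\nu_x(y')=b(y')\,\mu_{y'}(x)$, and obtain
\begin{equation}
b(y)\,\bar r\left(M_y,M_{y'}\right)=b(y')\sum_{x\in X}\mu_{y'}(x)\,r_x(y',y)=b(y')\,\bar r\left(M_{y'},M_y\right).
\end{equation}
This is detailed balance for $b$. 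Since $b>0$, normalizing by $Z$ gives a probability measure satisfying detailed balance.

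Summing detailed balance over $y'$ gives the stationarity condition \eqref{stazcond} for the chain with rates $\bar r$. By uniqueness (Theorem \ref{matrixtreeth} applied to the strongly connected effective graph), $b/Z$ is the invariant measure of the renormalized effective chain. The rates \eqref{eq:weff2} differ from \eqref{eq:weff} only by the factor $N$, so they have the same invariant measure, and $\bar\pi_N=\bar\pi$ is independent of $N$. This establishes \eqref{eq:pibeta} together with reversibility.

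There is no genuine obstacle. The only point needing care is to check that the manipulation is legitimate when some $r_x(y,y')$ vanish, i.e.\ when $(y,y')\notin E_x$ for some $x$. In that case \eqref{eq:ydetail} forces $r_x(y',y)=0$ as well, because $\nu_x>0$, so the termwise identity still holds.
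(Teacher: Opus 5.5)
Your proof is correct and follows the paper's argument: substitute \eqref{eq:alphabeta} to turn $b(y)\,\mu_y(x)$ into $a(x)\,\nu_x(y)$, apply the detailed balance \eqref{eq:ydetail} termwise, and substitute back. Your remarks on uniqueness (via strong connectivity of the effective graph) and on vanishing rates are correct details that the paper leaves implicit.
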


\begin{proof}
We show that the effective rates satisfy the detailed balance equation. For every $y,y'\in Y$ using \eqref{eq:weff2}, \eqref{eq:pibeta} and \eqref{eq:alphabeta} we have:
\begin{equation}
	\bar\pi \left(M_y\right)\, \bar r\left(M_y,M_{y'}\right)  \,=\,
	\frac{b(y)}{Z}\, \sum_{x\in X} \,\mu_y(x)\; r_x(y,y') \,=\, \frac 1Z\,\sum_{x\in X}\,a(x)\;\nu_x(y)\; r_x(y,y') \,.
\end{equation}
Then by \eqref{eq:ydetail} we conclude that 
	$\bar\pi \left(M_y\right)\, \bar r\left(M_y,M_{y'}\right) \,=\, \bar\pi \left(M_{y'}\right)\; \bar r\left(M_{y'},M_{y}\right)\,$. 
\end{proof}

\begin{corollary}\label{cor:yprocess}
Let $\left(\mu_y\right)_{y\in Y}$ be an arbitrary collection of probability measures on $X$ and $\bar\mu$ an arbitrary probability measure on $Y$.
Then there exist rates $r_N$ as in \eqref{diffr} such that the limiting invariant measure is the prescribed one:
\begin{equation} \label{eq:prescribedlim}
	\lim_{N\to\infty}\pi_N(x,y) \,=\, \bar\mu(y)\; \mu_y(x)\;,\quad (x,y)\in X\times Y\,.
\end{equation}
More precisely:
\begin{itemize}
\item[i)]  reversible rates $r^y(x,x')$ with respect to the prescribed measure $\mu_y(x)$ can be chosen;\\ 
\item[ii)] 
reversible rates $r_x(y,y')$ with respect to $\nu_x(y)$ can be chosen with respect to the measure $\nu_x(y)$ defined as
\begin{equation}\label{eq:pi2}
\nu_x(y) := \frac{1}{Z_x}\;\mu_y(x)\; \bar\mu(y)\;,\quad y\in Y
\end{equation}
where $Z_x:=\sum_{y'\in Y} \mu_{y'}(x)\,\bar\mu(y')$; \\ 
\item[iii)] the effective rates $\bar r$ defined by \eqref{eq:weff2} are reversible with respect to the prescribed measure $\bar \mu(y)$. 
As a consequence, the effective invariant measure
\begin{equation} 
\bar\pi\left(M_y\right) \,=\, \bar\mu(y)\;,\quad y\in Y\,.
\end{equation}
\end{itemize}
\end{corollary}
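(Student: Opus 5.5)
The plan is to build the rates explicitly from the prescribed data, using Proposition \ref{prop:davide} and the limit formula \eqref{eq:limiteXY}, which comes from Theorem \ref{ilteo}. First I will assume, as is implicit in the statement, that $\mu_y$ and $\bar\mu$ are strictly positive. For each $y\in Y$ I will take the Metropolis-type (or simply symmetric-in-$\mu$) rates $r^y(x,x') := \mu_y(x')$ for all $x\neq x'$. These are positive, so $(X,E^y)$ is complete and hence strongly connected. Detailed balance $\mu_y(x)\mu_y(x')=\mu_y(x')\mu_y(x)$ holds trivially. Therefore $\mu_y$ is the invariant measure of the fast chain, which proves item i) and matches \eqref{eq:xdetail}.

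Next I will define $\nu_x$ by \eqref{eq:pi2}. It is a strictly positive probability measure on $Y$ because $Z_x>0$. In the same way I will take $r_x(y,y'):=\nu_x(y')$ for $y\neq y'$. These rates are positive, give a strongly connected $(Y,E_x)$, and are reversible with respect to $\nu_x$, so \eqref{eq:ydetail} holds and item ii) follows. With these choices the full graph $(V,E)$ is strongly connected. The rates \eqref{diffr} take only the two scales $1$ and $1/N$, so Assumption \ref{FRL2} clearly holds, and \eqref{eq:limiteXY} applies.

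For item iii) I will verify the hypothesis \eqref{eq:alphabeta} of Proposition \ref{prop:davide}. Setting $a(x):=Z_x$ and $b(y):=\bar\mu(y)$, definition \eqref{eq:pi2} gives exactly $a(x)\nu_x(y)=\mu_y(x)\bar\mu(y)=b(y)\mu_y(x)$. The proposition then says that the effective rates \eqref{eq:weff2} are reversible with respect to $b/Z$. Here $Z=\sum_y\bar\mu(y)=1$, so this measure is $\bar\mu$. Uniqueness of the invariant measure of the irreducible effective chain then gives $\bar\pi(M_y)=\bar\mu(y)$. Inserting this into \eqref{eq:limiteXY} yields \eqref{eq:prescribedlim}.

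I do not expect a real obstacle. The only delicate point is positivity. If some $\mu_y$ or $\bar\mu$ vanished somewhere, reversible rates with respect to them could not be irreducible, and the statement would have to be read for strictly positive measures. I would flag this assumption explicitly. I would also check that the effective graph is strongly connected, which is automatic here since $\bar r(M_y,M_{y'})=\sum_x\mu_y(x)\nu_x(y')>0$.
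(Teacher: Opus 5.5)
Your proposal is correct and follows the paper's proof. You define $\nu_x$ by \eqref{eq:pi2} and pick reversible rates. Your explicit choices $r^y(x,x')=\mu_y(x')$ and $r_x(y,y')=\nu_x(y')$ simply make concrete the paper's generic appeal to Metropolis--Hastings. You then apply Proposition \ref{prop:davide} with $a(x)=Z_x$, $b(y)=\bar\mu(y)$ and conclude via \eqref{eq:limiteXY}, as the paper does. Your remark that $\mu_y$ and $\bar\mu$ must be strictly positive is a real implicit hypothesis that the paper leaves tacit: it is needed for irreducibility and for $b:Y\to(0,\infty)$ in Proposition \ref{prop:davide}.
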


\begin{proof}
The measures $\mu_y$ and $\bar\mu$ are given, then define $\nu_x$ as \eqref{eq:pi2}.
\smallskip
 
i), ii) are straightforward, since given a probability measure it is always possible to construct a reversible Markov chain with respect to that measure, for instance by employing Metropolis-Hastings dynamics.
\smallskip

To prove iii), set
\begin{equation}
 a(x) := Z_x = \sum_{y\in Y}\mu_y(x)\,\bar\mu(y) \;, \qquad
 b(y) := \bar\mu(y)\,,
\end{equation}
and notice that condition \eqref{eq:alphabeta} is verified.
Therefore by Proposition \ref{prop:davide} the effective rates $\bar r\left(M_y,M_{y'}\right):=\sum_{x\in X}\mu_y(x)\,r_x(y,y')$ are reversible with respect to the invariant measure $\bar\pi\left(M_y\right)=\bar \mu(y)\,$.
\smallskip

Finally, limit \eqref{eq:prescribedlim} follows by equation \eqref{eq:limiteXY}.
\end{proof}

We want to reproduce the discrete counterpart of the two-scale diffusion studied in \cite{AMM} and references therein, where the variables $x$ and $y$ are coupled by a potential but evolve on two different time scales with different \textit{effective temperatures}.
The concept of effective temperatures for each time scale arises in non-equilibrium systems such as turbulent flows \cite{HS} and spin glasses \cite{CKP}, appearing naturally as ratios of correlation and response functions, generalizing the usual fluctuation-dissipation relation (see for example \cite{BCKM,CR,C,GPSVV,M} for comprehensive reviews).
A coupling potential $U:X\times Y\to\R$ and $\beta_1,\beta_2>0$ are given.
On $X$ we consider the \textit{Gibbs measure} with potential $U(x,y)$ for fixed $y$ and inverse temperature $\beta_1$
\begin{equation} \label{eq:Gibbs}
	\mu_y(x) := \frac{1}{Z^y}\;e^{-\beta_1\,U(x,y)} 
\end{equation}
where $Z^{y}:=\sum_{x\in X}e^{-\beta_1\,U(x,y)}\,$. On $Y$ we consider the Gibbs measure with \textit{effective potential} $F(y):=-\frac{1}{\beta_1}\log Z^y$ and inverse temperature $\beta_2$
\begin{equation} \label{eq:effGibbs}
	\bar\mu(y) := \frac{1}{\bar Z}\;e^{-\beta_2\,F(y)}
\end{equation}
where $\bar Z:=\sum_{y\in Y}e^{-\beta_2\,F(y)}\,$.
Corollary \ref{cor:yprocess} guarantees that the prescribed joint measure is the limiting invariant measure of a suitable two-scale continuous-time Markov chain $(X_t^N,Y_t^N)$ on the state space $X\times Y$:
\begin{equation}
\lim_{N\to\infty} \pi_N(x,y) \,=\, \mu_y(x)\,\bar\mu(y) \;,\quad (x,y)\in X\times Y\,.
\end{equation}
Precisely, one has to choose on $X\times X$ reversible rates $r^y$ w.r.t. the Gibbs measure $\mu_y$, and on $Y\times Y$ reversible rates $r_x$ w.r.t. the measure $\nu_x$ provided by \eqref{eq:pi2}:
\begin{equation} \label{eq:nuXY}	
	\nu_x(y) = \frac{1}{Z_x}\,e^{-\beta_1\,U(x,y)\,-\,(\beta_2-\beta_1)\,F(y)} 
\end{equation}
where $Z_x:=\sum_{y'\in Y}e^{-\beta_1\,U(x,y')\,-\,(\beta_2-\beta_1)\,F(y')}\,$.
Notice that in particular cases the above measure  simplifies. Indeed, if the two temperatures coincide, i.e. $\beta_1=\beta_2=:\beta$, then $\nu_x(y) = \frac{1}{Z_x}\,e^{-\beta\,U(x,y)}$. If instead the potential is separable, i.e. $U(x,y)=U_1(x)+U_2(y)$, then the effective potential rewrites as $F(y)= U_2(y)+c$ and we get $\nu_x(y) = \frac{1}{Z_x}\,e^{-\beta_2\,U_2(y)}$.
Moreover, in the two-scale diffusion studied in \cite{AMM} a nice simplification is always possible, replacing the measure \eqref{eq:nuXY} by 
\begin{equation} \label{eq:nuXYsimple}
	\widetilde\nu_x(y) := \frac{1}{\widetilde Z_x}\, e^{-\beta_2\,U(x,y)} \;.
\end{equation}
The latter simplification is peculiar of continuous space framework: in general $\widetilde\nu_x$ does not satisfy condition \eqref{eq:alphabeta} of Proposition \ref{prop:davide}.
Nevertheless it might be possible to recover $\widetilde\nu_x$ when the graph $(Y, E_x)$ is a discrete grid approximating the torus $\T^{n}$ in the limit of the mesh size $h\to 0$. 
We only give the following  

\begin{proposition}
Let $U:X\times\T^{n}\to\R$ be a potential of regularity class $C^2$ with respect to $y\in\T^{n}$. $X$ finite set. $\beta_1,\beta_2>0$.
Let $\frac{1}{h}\in\N$ and $Y_h:=\{0,h,2h,\dots,1-h\}^{n}$ a discrete set of points in $\T^{n}$ with uniform spacing. Define the edge set $E_h:=\{(y,y')\in Y_h\times Y_h\,|\,\exists\,i:\, y_i'=y_i\pm h,\ y_j'=y_j\,\forall\,j\neq i \}$ with rates
\begin{equation} \label{eq:discreterates}
	r_x(y,y') \,:=\, e^{-\frac{1}{2}\beta_2\,(U(x,y')-U(x,y))} 
	\,,\quad  (y,y')\in E_h \,.
\end{equation}
Then:
\begin{itemize}
\item[i)] the rates $r_x(y,y')$ are reversible with respect to the measure $\widetilde\nu_x(y)$ defined by \eqref{eq:nuXYsimple};\\
\item[ii)] the effective rates $\bar r(y,y')$ defined by \eqref{eq:weff2},\eqref{eq:Gibbs} are approximately reversible with respect to the effective Gibbs measure $\bar\mu(y)$ defined by \eqref{eq:effGibbs}, precisely:
\begin{equation}
 \bar\mu(y)\; \bar r(y,y') - \bar\mu(y')\; \bar r(y',y) \,=\, O(h^2)\;\bar\mu(y)\,\bar r(y',y) \,.
\end{equation}
\end{itemize}
%
\end{proposition}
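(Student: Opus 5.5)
Part i) is a direct check of detailed balance. For $(y,y')\in E_h$ we compute
\begin{equation}
\widetilde\nu_x(y)\,r_x(y,y')=\frac{1}{\widetilde Z_x}\,e^{-\beta_2 U(x,y)}\,e^{-\frac12\beta_2(U(x,y')-U(x,y))}=\frac{1}{\widetilde Z_x}\,e^{-\frac12\beta_2(U(x,y)+U(x,y'))}.
\end{equation}
The right-hand side is symmetric in $y,y'$, and $E_h$ is symmetric, so i) follows at once.

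For ii), I fix $(y,y')\in E_h$ with $y'=y+he_i$; the case $y'=y-he_i$ is identical. Set $\Delta(x):=U(x,y')-U(x,y)$. By \eqref{eq:weff2} and \eqref{eq:Gibbs}, together with $\bar\mu(y)\propto (Z^y)^{\beta_2/\beta_1}$, I write
\begin{equation}
\bar\mu(y)\,\bar r(y,y')=\frac{(Z^y)^{\beta_2/\beta_1-1}}{\bar Z}\sum_{x}e^{-\beta_1U(x,y)}e^{-\frac12\beta_2\Delta(x)},
\end{equation}
and the analogous expression with $y,y'$ swapped, where $\Delta$ changes sign. The plan is to symmetrize around the midpoint $m=y+\tfrac h2 e_i$ on the torus. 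Taylor expansion with $U\in C^2$ gives $U(x,y)=U(x,m)-\tfrac h2\partial_iU(x,m)+O(h^2)$, $U(x,y')=U(x,m)+\tfrac h2\partial_iU(x,m)+O(h^2)$, and $\Delta(x)=h\,g(x)+O(h^2)$ with $g:=\partial_iU(\cdot,m)$. The $O(h^2)$ is uniform in $x$ because $X$ is finite and the torus is compact. Writing $\rho_m(x)\propto e^{-\beta_1U(x,m)}$, I then expand $\log Z^y=\log Z^m-\tfrac{\beta_1 h}{2}\langle g\rangle_{\rho_m}\cdot(-1)\,$, which has the sign absorbed so that $\log Z^{y}=\log Z^m+\tfrac{\beta_1h}{2}\langle g\rangle+O(h^2)$ and $\log Z^{y'}=\log Z^m-\tfrac{\beta_1h}{2}\langle g\rangle+O(h^2)$, with $\langle\cdot\rangle=\langle\cdot\rangle_{\rho_m}$.

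Taking the logarithm of each side, I get to first order in $h$:
\begin{equation}
\log\big(\bar\mu(y)\bar r(y,y')\big)=C+\Big(\tfrac{\beta_2}{\beta_1}-1\Big)\tfrac{\beta_1h}{2}\langle g\rangle+\log\big\langle e^{\frac{\beta_1h}{2}g-\frac{\beta_2h}{2}g}\big\rangle+O(h^2).
\end{equation}
The term $C$ is the common symmetric part. The last logarithm equals $\tfrac{(\beta_1-\beta_2)h}{2}\langle g\rangle+O(h^2)$, so it cancels the middle term and the first-order contribution vanishes. The reversed expression is the same computation with $h\mapsto -h$, and its first-order term likewise vanishes. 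Hence the two logarithms differ by $O(h^2)$, which means the ratio of the two fluxes is $1+O(h^2)$. Multiplying out gives ii) in the stated form $O(h^2)\,\bar\mu(y)\bar r(y',y)$.

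The main obstacle is the bookkeeping in this cancellation. One must check that the second-order remainders, such as the variance of $g$ under $\rho_m$ and the $\partial_i^2U$ terms, enter both sides at order $h^2$, so that nothing of order $h$ survives the comparison. One must also confirm that all constants are uniform in $y$, which compactness and $C^2$ regularity provide.
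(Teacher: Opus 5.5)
Your proof is correct and uses the same ingredients as the paper. Part i) is the identical detailed-balance check, and in part ii) your expansion $\log Z^{y}=\log Z^m+\frac{\beta_1 h}{2}\langle g\rangle+O(h^2)$ amounts to the paper's key identity $\sum_{x}\mu_y(x)\,\partial_{y_i}U(x,y)=\partial_{y_i}F(y)$, evaluated at the midpoint $m$. The only difference is bookkeeping: the paper expands at the endpoint to get $\bar r(y,y')=1-\frac{\beta_2}{2}\big(F(y')-F(y)\big)+O(h^2)$ and then compares $\bar r(y,y')/\bar r(y',y)$ with $\bar\mu(y')/\bar\mu(y)$, whereas you expand both log-fluxes about the midpoint and show each equals $C(m)+O(h^2)$. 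Your arrangement is slightly more symmetric, since the log-ratio is odd in $h$, but it is not a different argument.
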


\begin{proof}
i) Assume $(y,y')\in E_h$, then by \eqref{eq:nuXYsimple} and \eqref{eq:discreterates}  both ratios $\frac{r_x(y,y')}{r_x(y',y)}$ and $\frac{\widetilde\nu_x(y')}{\widetilde\nu_x(y)}$ are equal to $e^{-\beta_2\,(U(x,y')-U(x,y))}$.
\smallskip

ii) Assume $(y,y')\in E_h$ with $y_i'=y_i\pm h$. We have:
\begin{equation}
	r_x(y,y') \,=\, 1-\frac{\beta_2}{2}\,\big(U(x,y')-U(x,y)\big) + O(h^2)\,=\, 1\mp \frac{\beta_2\,h}{2}\; \partial_{y_i} U(x,y) + O(h^2) 
\end{equation}
and a direct computation shows that
\begin{equation}
	\sum_{x\in X}  \mu_y(x)\; \partial_{y_i} U(x,y) \,=\, \partial_{y_i} F(y)
\end{equation}
where $\mu_y(x)$ is the Gibbs measure \eqref{eq:Gibbs} and $F(y)$ is the effective potential defined above \eqref{eq:effGibbs}. The latter is the crucial property in the continuous diffusion framework.
As a consequence the effective rates \eqref{eq:weff2} rewrite as
\begin{equation} \label{eq:discreteeffrates}
	\bar r(y,y') \,=\, 
	1\mp \frac{\beta_2\,h}{2}\, \partial_{y_i} F(y) + O(h^2) \,=\, 1- \frac{\beta_2\,h}{2}\, \big(F(y')-F(y)\big) + O(h^2)
\end{equation}
Then from \eqref{eq:discreteeffrates} and \eqref{eq:effGibbs} we obtain
\begin{equation} \begin{split}
	\frac{\bar r(y,y')}{\bar r(y',y)} \,=\, 
	1 - \beta_2\, \left(F(y')-F(y)\right) \,+ O(h^2)
	\,=\, \frac{\bar\mu(y')}{\bar\mu(y)} \,+O(h^2) \,. \qedhere
\end{split} \end{equation}
%
\end{proof}

\begin{remark}
The choice \eqref{eq:discreterates} can be generalized to rates
$r_x(y,y')$ of the form $w(y,y')\, f\big(\beta_2\left(U(x,y')-U(x,y)\right) \big)\,$,
where $w$ is a positive symmetric function on $E_h$ uniformly bounded away both from $0$ and $\infty$ and independent of $x$, and $f\in C^2(\R^2)$ is a positive function such that $f(z)/f(-z) = e^{-z}$ (e.g., Glauber $f(z)=\frac{1}{1+e^z}$). 
%
\end{remark}
\smallskip

\subsection{Boundary driven exclusion process}
We consider a one-dimensional segment with $n$ sites, where each site can accommodate at most one particle. 
We denote by $\eta\in\{0,1\}^n$ a generic configuration of particles. For  $x=1,\dots,n$, $\eta(x)=1$ means that there is a particle at site $x$, while $\eta(x)=0$ means that the site $x$ is empty.

The dynamics is described as follows. Particles jump to a nearest-neighbor site at rate 1, provided that the site is empty, so that the exclusion rule (allowing at most one particle per site) is preserved. The system is in contact with boundary sources, which evolve slowly compared to the bulk dynamics. In particular, at left boundary, i.e. $x=1$, a particle is created at rate $\alpha/N$ when the site is empty, while at the right boundary, i.e. $x=n$, the particles are destroyed at rate $\beta/N$ when the site is occupied. Here $\alpha,\beta$ are some fixed positive constants. The rules of this dynamics are illustrated in Figure \ref{dinex}. This is usually called the boundary driven exclusion process in weak contact with external reservoirs.

\begin{figure}[h]
\begin{tikzpicture}[scale=0.92,
	hop/.style={->,thick},
	curved/.style={->,thick,bend left=35},
	particle/.style={circle,fill=black,inner sep=2pt}
	]
	
	\def\nsites{12}
	\def\dx{0.9}
	\def\tick{0.12}
	
	\draw[thick] (0,0) -- ({(\nsites-1)*\dx},0);
	
	\foreach \i in {0,...,11} {
		\draw[thick] (\i*\dx,-\tick) -- (\i*\dx,\tick);
	}
	
	\draw[curved]
	(-0.9,0.4) to node[above] {$\frac{\alpha}{N}$} (0,0.12);
	
	\draw[curved]
	({11*\dx},0.12) to node[above] {$\frac{\beta}{N}$}
	({11*\dx+0.9},0.4);
	
	\node[particle] at ({4*\dx},0) {};
	\node[particle] at ({5*\dx},0) {};
	
\draw[curved]
({4*\dx},0.15) to ({5*\dx},0.15);

\node at ({4.5*\dx},0.30) {\Large$\times$};

	\node[particle] at ({8*\dx},0) {};
	
	\draw[curved]
	({8*\dx},-0.15) to node[below] {$1$} ({7*\dx},-0.20);
	
	\draw[curved]
	({8*\dx},0.15) to node[above] {$1$} ({9*\dx},0.20);
	
	\node[particle] at ({11*\dx},0) {};
	
\end{tikzpicture}
\caption{The jump rates of a one-dimensional boundary driven exclusion process in weak contact with external reservoirs.}
\label{dinex}
\end{figure}

\smallskip
The transition graph is represented in Figure \ref{unparto}. Its vertices are the configurations $\eta\in V=\{0,1\}^n$.
The edges are of two types.
The fast edges have rate $r_N=1$ and correspond to particle jumps that preserve the total number of particles.
The slow edges have rates $r_N=\frac{\alpha}{N},\,\frac{\beta}{N}$ and correspond respectively to creation and annihilation of particles. 
The transition graph is strongly connected and has a positive invariant measure $\pi_N$.
\begin{figure}
	\centering
	\begin{tikzpicture}[scale=0.9,
		vertex/.style={circle, fill=black, inner sep=1.5pt},
		every edge/.style={draw, thick, ->},
		wavy/.style={->, decorate, decoration={snake, amplitude=1pt, segment length=5.5pt}}]
		
		\node at (-6,-3) {$M_0$};
		\node at (-3,-3) {$M_1$};
		\node at (0,-3) {$M_2$};
		\node at (3,-3) {$M_3$};
		\node[draw, rectangle, minimum width=1.5cm, minimum height=0.8cm, align=center] (a) at (-6,0) {$\circ \circ \circ$};
		\node[draw, rectangle, minimum width=1.5cm, minimum height=0.8cm, align=center] (b) at (-3,2) {$\bullet \circ \circ$};
		\node[draw, rectangle, minimum width=1.5cm, minimum height=0.8cm, align=center] (c) at (-3,0) {$\circ \bullet \circ$};
		\node[draw, rectangle, minimum width=1.5cm, minimum height=0.8cm, align=center] (d) at (-3,-2) {$\circ \circ \bullet$};
		\node[draw, rectangle, minimum width=1.5cm, minimum height=0.8cm, align=center] (e) at (0,2) {$\bullet \bullet \circ$};
		\node[draw, rectangle, minimum width=1.5cm, minimum height=0.8cm, align=center] (f) at (0,0) {$\bullet \circ \bullet$};
		\node[draw, rectangle, minimum width=1.5cm, minimum height=0.8cm, align=center] (g) at (0,-2) {$\circ \bullet \bullet$};
		\node[draw, rectangle, minimum width=1.5cm, minimum height=0.8cm, align=center] (h) at (3,0) {$\bullet \bullet \bullet$};
		
		\draw[->, thick, shorten >=2pt, shorten <=2pt] (b) -- (c);
		\draw[->, thick, shorten >=2pt, shorten <=2pt] (c) -- (d);
		\draw[->, thick, shorten >=2pt, shorten <=2pt](c) -- (b);
		\draw[->, thick, shorten >=2pt, shorten <=2pt] (d) -- (c);
		\draw[->, thick, shorten >=2pt, shorten <=2pt] (e) -- (f);
		\draw[->, thick, shorten >=2pt, shorten <=2pt] (f) -- (e);
		\draw[->, thick, shorten >=2pt, shorten <=2pt](f) -- (g);
		\draw[->, thick, shorten >=2pt, shorten <=2pt] (g) -- (f);
		
		\draw[wavy, thick, shorten >=2pt, shorten <=3pt] (a) -- (b);
		\draw[wavy, thick, shorten >=2pt, shorten <=3pt] (d) -- (a);
		\draw[wavy, thick, shorten >=2pt, shorten <=3pt] (f) -- (b);
		\draw[wavy, thick, shorten >=2pt, shorten <=3pt] (c) -- (e);
		\draw[wavy, thick, shorten >=2pt, shorten <=3pt] (g) -- (c);
		\draw[wavy, thick, shorten >=2pt, shorten <=3pt] (d) -- (f);
		\draw[wavy, thick, shorten >=2pt, shorten <=3pt] (g) -- (h);
		\draw[wavy, thick, shorten >=2pt, shorten <=3pt] (h) -- (e);

\end{tikzpicture}
\caption{Transition graph associated to the boundary driven exclusion process on $n=3$ sites. Each vertex represents a configuration $\eta=(\eta_1,\eta_2,\eta_3)\in\{0,1\}^3$. Fast edges with rate $1$ are drawn as straight lines, slow edges with rate $\frac{\alpha}{N}$ or $\frac{\beta}{N}$ as wavy lines.
The number of particles in $\eta$ determines its equivalence class $M_i$: 
transitions inside the same class happen on time scale $O(1)$, transitions from a class to another happen on time scale $O(N)$.}
\label{unparto}
\end{figure}
\begin{figure} 
	\begin{tikzpicture}[
		scale=0.75, transform shape,
		level distance=2.2cm,
		dot/.style={circle, fill=black, minimum size=6pt, inner sep=0pt},
		edge from parent/.style={draw, thick},
		edge label/.style={midway, fill=white, inner sep=1.5pt, font=\small}
		]
		
		\begin{scope}[xshift=-10cm]
			\node at (-2,0) {};
			\node at (-2,2.2) {$|\mathcal{M}|=n+1$};
			\node at (-2,4.4) {$|V|=2^n$};
		\end{scope}
		
		\node[dot] (root) at (-4.8,0) {};
		
		\path (root)
		child[grow=up, xshift=-5.2cm] {
			node[dot, label=right:{$M_0$}] {}
			child {node[draw, rectangle, minimum width=1.5cm, minimum height=0.8cm, align=center] {$\circ \circ \circ$} }
		}
		child[grow=up, xshift=-2cm] {
			node[dot, label=right:{$M_1$}] {}
			child {node[draw, rectangle, minimum width=1.5cm, minimum height=0.8cm, align=center] {$\bullet \circ \circ$} }
			child {node[draw, rectangle, minimum width=1.5cm, minimum height=0.8cm, align=center] {$\circ \bullet \circ$} }
			child {node[draw, rectangle, minimum width=1.5cm, minimum height=0.8cm, align=center] {$\circ \circ \bullet$} }
		}
		child[grow=up, xshift=2.6cm] {
			node[dot, label=right:{$M_2$}] {}
			child {node[draw, rectangle, minimum width=1.5cm, minimum height=0.8cm, align=center] {$\bullet \bullet \circ$} }
			child {node[draw, rectangle, minimum width=1.5cm, minimum height=0.8cm, align=center] {$\bullet \circ \bullet$} }
			child {node[draw, rectangle, minimum width=1.5cm, minimum height=0.8cm, align=center] {$\circ \bullet \bullet$} }
		}
		child[grow=up, xshift=5.8cm] {
			node[dot, label=right:{$M_3$}] {}
			child {node[draw, rectangle, minimum width=1.5cm, minimum height=0.8cm, align=center] {$\bullet \bullet \bullet$} }
		};
\end{tikzpicture}
\caption{Forest associated to the process of Figure \ref{unparto}. 
}
\label{alberi-ex}
\end{figure}
The multiscale iterative construction of Section \ref{sec: iteration} is encoded by a tree with $k=2$ levels, as illustrated in Figure \ref{alberi-ex}.

\smallskip
The equivalence classes of $(V,E_F)$ are sets of configurations with fixed number of particles: $M_i:=\left\{\eta\,:\, \sum_{x=1}^n\eta(x)=i\right\}$ for $i=0,\dots,n$. Each of them is a local minimum of the DAG. 
Since $\hat V=V$ the reduced rates coincide with original ones, i.e. $\hat r_N=r_N$.
\smallskip

The fast dynamics restricted to each equivalence class $M_i$ corresponds to the simple exclusion dynamics with closed boundaries.
Then the invariant measure $\mu_{M_i}^{N}$ does not depend on $N$ and is the uniform probability measure $\mu_i$ on $M_i$, i.e.,
\begin{equation}
	 \mu_i(\eta) \,=\, \frac{1}{{n\choose i}}\;\id\Big(\textstyle\sum_x\eta(x)=i\Big)\,.
\end{equation}
Under this measure the occupation probability $\mu_i\left(\eta(x)=1\right)$ is the same for every site $x$ and equals $\frac{i}{n}$.
\smallskip

The effective rates are $\bar{r}_N\left(M_i, M_j\right)=0$ if $|i-j|\neq 1$, and:
\begin{equation}\label{251}
	\bar{r}_N\!\left(M_i, M_{i+1}\right) \,=\, \dfrac{\alpha}{N}\; \mu_i\big(\eta(1)=0\big) \,=\, \dfrac{\alpha}{N}\,\Big(1-\dfrac{i}{n}\Big) \;,
\end{equation}
\begin{equation} \label{252}
	\bar{r}_N\!\left(M_i, M_{i-1}\right) \,=\, \dfrac{\beta}{N}\;\mu_i\big(\eta(n)=1\big) \,=\, \dfrac{\beta}{N}\,\dfrac{i}{n} \,.
\end{equation}
They all have the same scale so the invariant measure $\bar \pi_N$ does not depend on $N$.
The effective dynamics is a random walk on the integer numbers from $0$ to $n$ that increases or decreases its value by one with rates given in \eqref{251}-\eqref{252}. 
This Markov chain is reversible and $\bar\pi_N=\bar \pi$ can be computed by detailed balance equation, obtaining:
\begin{equation}
	\bar\pi\left(M_i\right) \,=\, \binom{n}{i}\left(\frac{\alpha}{\alpha+\beta}\right)^i\left(\frac{\beta}{\alpha+\beta}\right)^{n-i}\,,
\end{equation}
that is a Binomial distribution of parameters $n$ and $\frac{\alpha}{\alpha+\beta}$.
%
As a consequence, we can write the limiting invariant measure of the boundary driven exclusion process as
\begin{equation}
	\lim_{N\to\infty} \pi_N(\eta) \,=\, \sum_{i=0}^n\,\bar\pi\left(M_i\right)\,\mu_i(\eta) \,=\, \prod_{x=1}^n \Big(\frac{\alpha}{\alpha+\beta}\,\id\big(\eta(x)=1\big) \,+\, \frac{\beta}{\alpha+\beta}\,\id\big(\eta(x)=0\big) \Big)
\end{equation}
that is a Bernoulli product measure on $\{0,1\}^n$ of parameter $\frac{\alpha}{\alpha+\beta}$.
\smallskip

We can also obtain the above result by a direct identification of the fast arborescences. 
Fix a configuration with $i$ particles $\eta\in M_i$. The fast arborescences $\tau\in\mathcal T^F_\eta$ can be identified by using Lemma \ref{Lemma2}.
\smallskip

First of all, when restricted to a fast equivalence class, $\tau[M_j]$ must be an arborescence of $(M_j,E_F[M_j])$ and all its edge weights are equal to $1$.
$\tau$ contains no edge going out from the class of the root $M_i$.
\smallskip

In addition, for every $j>i$, $\tau$ contains exactly one edge going from class $M_j$ to class $M_{j-1}$, which has weight $\frac{\beta}{N}$.
For every $j<i$ instead, $\tau$ contains exactly one edge going from class $M_j$ to class $M_{j+1}$, which has weight $\frac{\alpha}{N}$.
These slow edges must start from the root of the sub-arborescence $\tau[M_j]$ they are leaving and the arrival point is automatically determined.
\smallskip

As a consequence, for $j>i$ the sub-arborescence $\tau[M_j]$ must be rooted at a configuration $\eta_j\in M_j$ such that $\eta_j(n)=1$, since with the next transition a particle has to be annihilated from the right boundary. The possible choices for such root $\eta_j$ are ${n-1 \choose j-1}$.

On the other hand, for $j<i$ the sub-arborescence $\tau[M_j]$ must be rooted at a configuration $\eta_j\in M_j$ such that $\eta_j(1)=0$, since with the next transition a particle has to be created from the left boundary. The possible choices for such root $\eta_j$ are ${n-1 \choose j}$.
\smallskip

Now observe that when configurations $\eta',\eta''$ both belong to the same class $M_j$ the rates $r_N(\eta',\eta'')$ are symmetric . Therefore to choose an arborescence $\tau[M_j]$ of $(M_j,E[M_j])$ rooted at $\eta_j$ it suffices to choose an un-directed spanning tree of this subgraph, and after that it is always possible to orient the edges of the tree towards $\eta_j$. We call $\mathcal N_j$ the number of spanning un-directed trees of $(M_j,E[M_j])$.
\smallskip

From the above considerations we deduce that for $\eta\in M_i$
\begin{equation}
	r_N\left(\mathcal T^F_\eta\right) \,=\, \frac{\alpha^i\,\beta^{n-i}}{N^n}\ \prod_{k=0}^{n-1}\binom{n-1}{k}\ \prod_{\ell=0}^n\mathcal N_\ell \;,
\end{equation}
where on the right hand side the first factor weights the slow edges connecting an equivalence class to another, the middle factors compute the possible choices of the local roots inside each class, and the last product accounts for the unidirected spanning trees of each class.
Notice that a part from the common factor that does not depend on the configuration $\eta$, we have found $r_N\left(\mathcal T^F_\eta\right) \,\propto\,\alpha^{\sum_x\eta(x)}\;\beta^{n-\sum_x \eta(x)}$ hence $\frac{r_N\left(\mathcal T^F_\eta\right)}{r_N\left(\mathcal T^F\right)}$ is a Bernoulli product measure of parameter $\frac{\alpha}{\alpha+\beta}$. 
By Proposition \ref{existence} this is the limiting invariant measure $\pi$.

\begin{section}*{Acknowledgments}
	 The authors thank Claudio Landim and Emanuele Mingione for useful discussions.
	 D.A. and D.G. acknowledge financial support from the Italian Ministry of University and Research 
	 through PRIN project “Emergence of condensation-like phenomena in interacting particle systems: kinetic and lattice models”, 
	 grant 202277WX43.
	 D.G. acknowledges financial support from the Italian Ministry of Foreign Affairs and International Cooperation by the grant BR26GR05. 
	 The authors are members of the Italian Institute of Advanced Mathematics (INdAM).
\end{section}

\end{document}